\newtheorem{theorem}{Theorem}[section]
\newtheorem{corollary}[theorem]{Corollary}
\newtheorem{definition}[theorem]{Definition}
\newtheorem{lemma}[theorem]{Lemma}
\newtheorem{conjecture}[theorem]{Conjecture}
\newtheorem{observation}[theorem]{Observation}
\newtheorem{question}[theorem]{Question}
\newtheorem{fact}[theorem]{Fact}
\newtheorem{convention}[theorem]{Convention}
\begin{document}
\title[On the universality of the nonstationary ideal]{On the universality of the nonstationary ideal}

\author{Sean D. Cox}
\email{scox9@vcu.edu}
\address{
Department of Mathematics and Applied Mathematics \\
Virginia Commonwealth University \\
1015 Floyd Avenue \\
Richmond, Virginia 23284, USA 
}

\thanks{The author gratefully acknowledges support from the VCU Presidential Research Quest Fund, and thanks Brent Cody and Monroe Eskew for helpful conversations on topics related to the paper.  The author also is grateful to the anonymous referee for suggesting Question \ref{q_Referee}.}

\subjclass[2010]{03E55,  03E35
}

\begin{abstract}
Burke~\cite{MR1472122} proved that the generalized nonstationary ideal, denoted NS, is universal in the following sense: every normal ideal, and every tower of normal ideals of inaccessible height, is a canonical Rudin-Keisler projection of the restriction of $\text{NS}$ to some stationary set.  We investigate how far Burke's theorem can be pushed, by analyzing the universality properties of NS with respect to the wider class of \emph{$\mathcal{C}$-systems of filters} introduced by Audrito-Steila~\cite{AudritoSteila}.  First we answer a question of \cite{AudritoSteila}, by proving that $\mathcal{C}$-systems of filters do not capture all kinds of set-generic embeddings.  We provide a characterization of supercompactness in terms of short extenders and canonical projections of NS, without any reference to the strength of the extenders; as a corollary, NS can consistently fail to canonically project to arbitrarily strong short extenders.  We prove that $\omega$-cofinal towers of normal ultrafilters---e.g.\ the kind used to characterize I2 and I3 embeddings---are well-founded if and only if they are canonical projections of NS.  Finally, we provide a characterization of ``$\aleph_\omega$ is Jonsson" in terms of canonical projections of NS.    
\end{abstract}

\maketitle

\tableofcontents

\section{Introduction}

Burke, improving an earlier theorem of Foreman, proved the following theorem about the generalized Rudin-Keisler order on normal ideals:\begin{theorem}[Burke~\cite{MR1472122}]\label{thm_Burke}
If $\mathcal{I}$ is a normal, fine, countably complete ideal, or a tower of such ideals of inaccessible height, then there exists a stationary set $S_{\mathcal{I}}$ such that $\mathcal{I}$ is the canonical projection of the generalized nonstationary ideal restricted to $S_{\mathcal{I}}$ (denoted $\text{NS} \restriction S_{\mathcal{I}}$).
\end{theorem}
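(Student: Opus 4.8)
The plan is to realize $\mathcal{I}$ as a pushforward of the nonstationary ideal on a sufficiently rich space under the canonical ``intersection'' map. Fix a regular $\theta$ with $\mathcal{P}(\mathcal{P}(X)) \in H_\theta$, and work with $\mathrm{NS}_{H_\theta}$ on $\mathcal{P}(H_\theta)$; since $X \in H_\theta$ and $H_\theta$ is transitive we have $X \subseteq H_\theta$, so the canonical projection $\pi : \mathcal{P}(H_\theta) \to \mathcal{P}(X)$, $\pi(M) = M \cap X$, is defined. I would first record the two structural facts that make $\pi$ the right map: every normal fine ideal on $\mathcal{P}(X)$ extends $\mathrm{NS}_X$ (so $\mathcal{I}^* \subseteq$ the club filter on $\mathcal{P}(X)$), and $\pi$ pushes $\mathrm{NS}_{H_\theta}$ forward to exactly $\mathrm{NS}_X$ (a club in $\mathcal{P}(X)$ pulls back under $\pi$ to a club in $\mathcal{P}(H_\theta)$). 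The candidate stationary set is
\[
S_{\mathcal{I}} \;=\; \bigl\{\, M \prec (H_\theta,\in,\lhd,\mathcal{I}) \;:\; M\cap X \in \textstyle\bigcap(\mathcal{I}^*\cap M)\,\bigr\},
\]
i.e.\ the models whose trace on $X$ lies in every $\mathcal{I}$-large set they recognize; this is the combinatorial shadow of the seed $[\mathrm{id}]$ of the generic ultrapower by $\mathcal{I}$. The claim to establish is that for all $A\subseteq\mathcal{P}(X)$ one has $A\in\mathcal{I}$ iff $\pi^{-1}[A]\cap S_{\mathcal{I}}\in\mathrm{NS}_{H_\theta}$.

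The easy half is ``$A \in \mathcal{I} \Rightarrow$ preimage nonstationary'': if $A \in \mathcal{I}$ then $\mathcal{P}(X)\setminus A \in \mathcal{I}^*$, and for the club-many $M$ with $A \in M$ the definition of $S_{\mathcal{I}}$ forces $M \cap X \in \mathcal{P}(X)\setminus A$; hence $\{M \in S_{\mathcal{I}} : M\cap X \in A\}$ is contained in the nonstationary set $\{M : A \notin M\}$. This direction uses only $\mathcal{I} \supseteq \mathrm{NS}_X$.

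The substance is the converse together with the stationarity of $S_{\mathcal{I}}$ itself (the case $A=\mathcal{P}(X)$): I must show that whenever $A \in \mathcal{I}^+$ the set $\{M \in S_{\mathcal{I}} : M\cap X \in A\}$ meets every club. Given a club, presented as the closure points of the Skolem functions of $(H_\theta,\in,\lhd,\mathcal{I})$, I would restrict to the Skolem hulls $M_x := \mathrm{Sk}^{H_\theta}(x)$ for $x$ ranging over the club $\{x : M_x \cap X = x\}$, reducing the task to finding $x \in A$ with $x \in C$ for every $C \in \mathcal{I}^* \cap M_x$. Each such $C$ has the form $t(a_1,\dots,a_n)$ for a Skolem term $t$ and $a_1,\dots,a_n \in x$, so it suffices to arrange, for each of the countably many terms $t$, that $x \in t(\vec a)$ for all $\vec a \in [x]^{<\omega}$ with $t(\vec a) \in \mathcal{I}^*$. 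This is exactly a diagonal-intersection condition: for fixed $t$ the good $x$ form $\triangle_{\vec a \in [X]^{<\omega}} t(\vec a)$, which lies in $\mathcal{I}^*$ by normality (with fineness, to index the diagonal by finite tuples), and intersecting over the countably many terms stays in $\mathcal{I}^*$ by countable completeness. The resulting $\mathcal{I}^*$-set meets the $\mathcal{I}^+$-set $A$, and any witness $x$ yields $M_x \in S_{\mathcal{I}}$ in the given club with $M_x\cap X=x\in A$. This step — and its reliance on all three hypotheses (normality for the diagonal intersection, fineness to index it by $[X]^{<\omega}$, countable completeness to survive the countably many terms) — is the heart of the argument and the main obstacle; everything else is bookkeeping.

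Finally, for a tower $\langle \mathcal{I}_\alpha : \alpha < \delta\rangle$ of inaccessible height I would run the same construction with a single $\theta$ above $\delta$ and all the underlying sets, taking $S$ to consist of the $M \prec H_\theta$ (containing the tower) for which $M \cap X_\alpha \in \bigcap(\mathcal{I}_\alpha^* \cap M)$ holds simultaneously for every $\alpha \in M \cap \delta$. Coherence of the tower makes these conditions compatible under the maps $M \mapsto M\cap X_\alpha$, and inaccessibility of $\delta$ guarantees that each $M$ sees only a bounded, hence manageable, initial segment of the tower, so the level-by-level diagonal-intersection argument above goes through uniformly. The only genuinely new difficulty here is the simultaneous bookkeeping across the $\delta$-many levels, which inaccessibility is precisely what tames.
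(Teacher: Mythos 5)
The paper states this result only as a quotation from Burke and gives no proof of it, so there is nothing internal to compare your argument against; I will assess it on its own terms. For a single normal, fine, countably complete ideal your proof is correct and is essentially the standard (Burke's) argument: the set $S_{\mathcal{I}}$ of good models, the fact that $\mathcal{I}^*$ contains the club filter on $\mathcal{P}(X)$ (which drives both the easy direction and the restriction to the club $\{x : M_x\cap X = x\}$), and the reduction of goodness of $M_x=\mathrm{Sk}(x)$ to membership of $x$ in countably many diagonal intersections $\triangle_{\vec a}\,t(\vec a)$, one per Skolem term $t$. Burke phrases the last step dually, as a pressing-down argument (stabilize a witnessing pair $(t,\vec a)$ with $C=t(\vec a)\in\mathcal{I}^*$ and $x\notin C$ on an $\mathcal{I}$-positive set, contradicting $C\in\mathcal{I}^*$), but that is the Fodor form of your diagonal intersection, and your accounting of where normality, fineness, and countable completeness each enter is accurate.

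The tower case, however, is not just ``the same construction with more bookkeeping,'' and your sketch has a genuine gap. If $A$ is positive at level $\alpha_0$ and you take hulls $M_x=\mathrm{Sk}(x)$ of sets $x\subseteq X_{\alpha_0}$, then a goodness requirement at a level $\alpha\in M_x\cap\delta$ with $\alpha>\alpha_0$ is a condition on $M_x\cap X_\alpha$, which is not of the form ``$x$ belongs to such-and-such subset of $\mathcal{P}(X_{\alpha_0})$'' and therefore cannot be absorbed into a diagonal intersection computed at level $\alpha_0$; nor is there an ideal at level $\delta$ in which to take a single global diagonal intersection. The standard repair is to lift $A$ along the coherence maps and run a closing-off iteration: the hulls involved have size $<\delta$ because $\delta$ is a strong limit, hence $\sup(M\cap\delta)<\delta$ because $\delta$ is regular, so one builds an increasing $\omega$-chain $M_0\subseteq M_1\subseteq\cdots$ in which $M_{n+1}$ is arranged (by the single-ideal argument at a level above $\sup(M_n\cap\delta)$) to satisfy the requirements indexed by $M_n$, and countable completeness of every ideal in the tower makes the union good. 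That iteration is precisely where inaccessibility is used; the sentence ``each $M$ sees only a bounded initial segment, so the argument goes through uniformly'' elides it.
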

For example, if $\mathcal{I}$ is a normal ideal on $\wp_\kappa(\lambda)$, then there is some stationary $S_{\mathcal{I}} \subseteq \wp_\kappa(H_{(2^\lambda)^+})$ such that, letting $\pi$ denote the map $M \mapsto M \cap \lambda$ defined on $\wp_\kappa(H_{(2^\lambda)^+})$, we have:
\[
A \in \mathcal{I} \ \iff \  \pi^{-1}[A] \in \text{NS} \restriction S_{\mathcal{I}} 
\]
This is true even if $\mathcal{I}$ is the dual of a $\lambda$-supercompact measure on $\wp_\kappa(\lambda)$.

%Burke used Theorem \ref{thm_Burke}, in conjunction with Woodin's results on the stationary tower, to prove that in the presence of Woodin cardinals, every normal ideal can be extended to a $V$-normal ultrafilter with wellfounded ultrapower in some generic extension (Theorem \ref{} of Burke~\cite{}).

Claverie~\cite{claverieThesis} generalized the concept of ideals on regular cardinals to so-called \textbf{(short) ideal extenders}.  This was further generalized by Audrito-Steila~\cite{AudritoSteila} to \textbf{$\mathcal{C}$-systems of filters}, which (for technical reasons discussed in Section \ref{sec_AudritoSteila}) we will call \textbf{normal systems of filters}.  Normal systems of filters encompass single ideals, towers of ideals, and ideal extenders into a single framework.   Moreover, their framework provides a natural extension of the notion of canonical projections of ideals.  Roughly, we say that \textbf{a normal system $\boldsymbol{\vec{F}}$ of filters is a canonical projection of NS} if and only if there exists a stationary set $S_{\vec{F}}$ such that $\vec{F}^\frown \langle \text{NS} \restriction S_{\vec{F}}\rangle$ is a normal system of filters.\footnote{See Section \ref{sec_CanProjNS} for the precise definitions of \emph{normal system of filters} and  \emph{canonical projections of NS}.}  In the case where $\vec{F}$ is (the dual of) a single ideal or tower of ideals, this notion of canonical projection agrees with the usual notion used in Burke's theorem.

This paper addresses how far Burke's Theorem \ref{thm_Burke} can be pushed, namely:
\begin{question}\label{q_MainQuestion}
Which normal systems of filters are canonical projections of NS?
\end{question}

Unlike Burke's Theorem \ref{thm_Burke}, the answer to Question \ref{q_MainQuestion} depends on the normal system of filters in question, and on the ambient large cardinals in the universe.  The following list summarizes our main results regarding Question \ref{q_MainQuestion}:
\begin{enumerate}
 \item A cardinal $\kappa$ is supercompact if and only if there are class many $\lambda$ such that there exists a short $(\kappa,\lambda)$-extender that is a canonical projection of NS.  In particular, if $\kappa$ is strong but not supercompact, then there are arbitrarily strong short extenders with critical point $\kappa$ that are \textbf{not} canonical projections of NS.  One interesting feature of this characterization of supercompactness is that it makes no reference to the strength of the extenders.  See Section \ref{sec_CharSuper}.
 \item Every precipitous $\omega$-cofinal tower of normal ideals is a canonical projection of NS.  Moreover, for $\omega$-cofinal towers of normal \textbf{ultra}filters, being a canonical projection of NS exactly characterizes having a wellfounded ultrapower.  In particular, I2 embeddings can be characterized by so-called I3 towers that happen to be canonical projections of NS.  See Section \ref{sec_OmegaCofTowers}.
 \item If $2^\omega < \aleph_\omega$, then $\aleph_\omega$ is Jonsson if and only if there exists an I2 tower of normal ideals that is a canonical projection of NS and decides its critical sequence.  This extends an old theorem of Silver.  See Section \ref{sec_OmegaCofTowers}.
\end{enumerate}

We also answer a question of Audrito-Steila~\cite{AudritoSteila} by proving that normal systems of filters do not capture all kinds of set generic embeddings.  This appears in Section \ref{sec_AudritoSteila}.

All terminology and notation agrees with Jech~\cite{MR1940513}, unless otherwise indicated.  By \textbf{ideal} we will always mean a fine, countably complete ideal, but not necessarily normal.  The \textbf{support} of an ideal $I$ is the set $\cup S$, where $S$ is any $I$-positive set.\footnote{Since $I$ is fine, this does not depend on the choice of $S$.  E.g.\ if $I$ is an ideal on $\wp_\kappa(H_\theta)$ then the support of $I$ is $H_\theta$, and $H_\theta = \cup S$ where $S$ is any $I$-positive set.}  If $I$ is an ideal on a set $Z$ then $\mathbb{B}_I$ denotes the quotient boolean algebra $(\wp(Z)/I, \le_I)$.  A set $S$ is \textbf{stationary} iff for every $F: [\cup S]^{<\omega} \to \cup S$ there is some $M \in S$ that is closed under $F$.  If $S$ is a stationary set, the \textbf{nonstationary ideal restricted to $\boldsymbol{S}$, denoted $\boldsymbol{\text{NS} \restriction S}$}, is the ideal generated by
\[
\big\{ \wp(\cup S) \setminus \mathcal{C}_{F} \ : \   F: [\cup S]^{<\omega} \to \cup S \big\}
\] 
where
\[
\mathcal{C}_F:= \{ X \subseteq \cup S \ : \ X \text{ is closed under } F  \}
\]

\section{Canonical projections of NS}\label{sec_CanProjNS}

The notion of one normal ideal canonically projecting to another, or of a normal ideal projecting to a tower of normal ideals, has appeared many times in the literature, e.g.\ Foreman~\cite{MattHandbook} and Burke~\cite{MR1472122}.  For concreteness, suppose $J$ is a normal ideal with support $\lambda_J$, $I$ is a normal ideal with support $\lambda_I$, and $\lambda_J \le \lambda_I$.  We say that $J$ is the canonical projection of $I$ to $\lambda_J$ iff, letting $\pi: \wp(\lambda_I) \to \wp(\lambda_J)$ denote the map $M \mapsto M \cap \lambda_J$, we have
\[
J = \big\{   A \in \wp\wp(\lambda_J) \ : \ \pi^{-1}[A] \in I  \big\}
\]
It will be illustrative to give an alternative characterization: $J$ is the canonical projection of $I$ iff for all $A \in \wp \wp(\lambda_J)$:
\begin{equation}\label{eq_DerivedIdeal}
A \in J \iff \Vdash_{\mathbb{B}_I} j_{\dot{G}}[\lambda_J] \notin j_{\dot{G}}(\check{A})
\end{equation}
where $j_{\dot{G}}$ is the $\mathbb{B}_I$-name for its generic ultrapower embedding.  Similarly, $I$ projects to a tower $\vec{J} = \langle J_\alpha \ : \ \alpha < \delta \rangle$ iff $I$ projects to every $J_\alpha$.  But what should it mean for a normal ideal $I$ to project to an ideal \emph{extender} $\langle J_a \ : \ a \in [\lambda_I]^{<\omega} \rangle$, which looks very different from a tower of normal ideals?\footnote{In fact many $J_a$ in the ideal extender are typically \textbf{not} normal ideals, even if the entire ideal extender satisfies the so-called normality criterion for extenders.}   Roughly, we will define this to mean that $\vec{J}$ is derived from $I$'s generic ultrapower embeddings in the following way:  that for all $a \in [\lambda_I]^{<\omega}$,
\begin{equation}\label{eq_DerivedExtender}
X \in J_a \ \iff \ \Vdash_{\mathbb{B}_I} \ \ \check{a} \notin j_{\dot{G}}(\check{X}) 
\end{equation}
where $j_{\dot{G}}$ is the $\mathbb{B}_I$-name for the generic ultrapower embedding.    Although \eqref{eq_DerivedExtender} seems vaguely similar to \eqref{eq_DerivedIdeal}, to see that it really is the same phenomenon, we will use a notion of Audrito-Steila~\cite{AudritoSteila}, which they call $\boldsymbol{\mathcal{C}}$\textbf{-systems of filters}, which elegantly unifies normal ideals, towers of normal ideals, and ideal extenders into a single framework.

A set $\mathcal{C}$ is called a \textbf{directed set of domains} iff $\mathcal{C}$ is closed under subsets and (finite) unions, and $\bigcup \mathcal{C}$ is transitive.  For each $a \in \mathcal{C}$, 
\[
O_a:= \{ \pi_M \restriction (a \cap M) \ : \ M \subset \text{trcl}(a) \ \wedge \ (M,\in) \text{ is extensional } \}
\]
where $\pi_M$ denotes the Mostowski collapsing map of $M$. If $a \subseteq b$ then $\pi_{b,a}: O_b \to O_a$ is defined by $f \mapsto f \restriction a$.  A \textbf{directed system of filters (with domain $\boldsymbol{\mathcal{C}}$)} is a sequence $\vec{F} = \langle F_a \ : \ a \in \mathcal{C} \rangle$ such that $\mathcal{C}$ is a directed set of domains, each $F_a \subset \wp(O_a)$ is a filter, and whenever $a \subseteq b$ are both in $\mathcal{C}$ then $F_a$ is the canonical projection of $F_b$ via $\pi_{b,a}$; i.e.\ for all $A \subseteq O_a$:
 \[
 A \in F_a \ \iff \ \pi_{b,a}^{-1}[A] \in F_b
 \]

If $\vec{F}$ is a directed system of filters with domain $\mathcal{C}$, the \textbf{poset associated with $\boldsymbol{\vec{F}}$, denoted $\boldsymbol{\mathbb{B}_{\vec{F}}}$}, is the partial order whose conditions are pairs $(a,S)$ where $a \in \mathcal{C}$ and $S \in F_a^+$ (i.e.\ $S \subseteq O_a$ and $S \notin \text{Dual}(F_a)$).  The ordering of $\mathbb{B}_{\vec{F}}$ is:  $(a,S) \le (b,T)$ iff 
\[
\pi_{a \cup b, a}^{-1}[S] \le_{F_{a \cup b}} \pi_{a \cup b, b}^{-1}[T]
\]
where $X \le_F Y$ means $X \setminus Y$ is in the dual ideal of $F$.  If $G$ is $(V,\mathbb{B}_{\vec{F}})$-generic, then for each $a \in \mathcal{C}$ the object
\[
G_a:= \{  S \subset O_a \ : \ (a,S) \in G    \}
\]
is an ultrafilter on $\wp^V(O_a)$ extending $F_a$.  Hence it makes sense to form the ultrapower map $j_{G_a}:V \to \text{ult}(V,G_a)$, and if $a \subseteq b$ are both in $\mathcal{C}$ then $\pi_{b,a}$ gives rise to an elementary embedding $k_{a,b}: \text{ult}(V,G_a) \to \text{ult}(V,G_b)$ such that $j_{G_b} = k_{a,b} \circ j_{G_a}$.  Since $\mathcal{C}$ is $\subseteq$-directed, in $V[G]$ the ultrapowers of $V$ by the various $G_a$'s gives rise to a directed system and an ultrapower map $j_G: V \to \text{ult}(V,G)$ where the target $\text{ult}(V,G)$ is the direct limit of the $\text{ult}(V,G_a)$'s.  The map $j_G$ is called the \textbf{generic ultrapower map for $\boldsymbol{\vec{F}}$}.

Definition 2.6 of Audrito-Steila~\cite{AudritoSteila} imposes an additional ``normality" requirement of directed systems of filters, resulting in what they call a \textbf{$\boldsymbol{\mathcal{C}}$-system of filters}, which is the main focus of their paper.  Rather than describing their somewhat complicated normality criterion,\footnote{i.e.\ requirement 4 of Definition 2.6 of \cite{AudritoSteila}.} we choose instead to give an equivalent definition that will be more convenient for our applications.  Also, because we will often be viewing one such system as a projection of another---e.g.\ viewing some system $\vec{F} = \langle F_a \ : \ a \in \mathcal{C}_{\vec{F}} \rangle$ as a projection of another system $\vec{H} = \langle H_a \ : \ a \in \mathcal{C}_{\vec{H}} \rangle$, it would lead to confusion if we referred to both of them as ``$\mathcal{C}$-systems of filters", since $\mathcal{C}_{\vec{F}}$ and $\mathcal{C}_{\vec{H}}$ will differ.  Therefore, what Audrito-Steila call a ``$\mathcal{C}$-system of filters", we will instead call a ``normal system of filters":

\begin{definition}\label{def_NormalSystem}
A \textbf{normal system of filters} is a sequence $\vec{F} = \langle F_a \ : \ a \in \mathcal{C} \rangle$ such that: 
\begin{enumerate}
 \item $\mathcal{C}$ is a directed set of domains and $\vec{F}$ is a directed system of filters with domain $\mathcal{C}$ (as defined above);
 \item $\mathbb{B}_{\vec{F}}$ forces $\bigcup \mathcal{C}$ to be a subset of the wellfounded part of $\text{ult}(V,\dot{G})$;\footnote{Where we require that the wellfounded part of $\text{ult}(V,\dot{G})$ has been transitivized.}
 \item\label{item_DefOfFilter} For all $a \in \mathcal{C}$ and all $D \subset O_a$:  
 \[
 D \in F_a \ \iff \ \Vdash_{\mathbb{B}_{\vec{F}}}  \big( j_{\dot{G}} \restriction \check{a} \big)^{-1} \in j_{\dot{G}}(\check{D})
 \]
\end{enumerate}
\end{definition}

The following lemma follows directly from Proposition 2.24 of \cite{AudritoSteila}:
\begin{lemma}
A sequence $\vec{F}$ is a normal system of filters as in Definition \ref{def_NormalSystem} if and only if $\vec{F}$ is a $\mathcal{C}$-system of filters as in Definition 2.1 of Audrito-Steila~\cite{AudritoSteila}. 
\end{lemma}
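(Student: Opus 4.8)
The plan is to peel off the common core of the two definitions and reduce the remaining content to Proposition 2.24 of \cite{AudritoSteila}. Both Definition \ref{def_NormalSystem} and the Audrito--Steila definition of a $\mathcal{C}$-system presuppose that $\vec{F}$ is a directed system of filters over a directed set of domains, so clause (1) of Definition \ref{def_NormalSystem} is verbatim the first requirements of their Definition 2.6 and needs no argument. What must be checked is that clauses (2) and (3) of Definition \ref{def_NormalSystem}, taken together, are equivalent to the normality criterion (requirement 4 of their Definition 2.6).

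For the forward direction I would assume $\vec{F}$ is a $\mathcal{C}$-system in the sense of \cite{AudritoSteila} and apply Proposition 2.24, whose content is a generic-ultrapower reformulation of their normality requirement: it asserts that normality is equivalent to each $F_a$ being the filter \emph{derived} from the generic embedding $j_{\dot{G}}$ via the canonical seed $(j_{\dot{G}} \restriction \check{a})^{-1}$. Reading this off yields clause (3) immediately. For clause (2), I would observe that the very statement ``$(j_{\dot{G}} \restriction \check{a})^{-1} \in j_{\dot{G}}(\check{D})$'' presupposes that $(j_{\dot{G}} \restriction \check{a})^{-1}$ names an element of the transitivized wellfounded part of $\text{ult}(V,\dot{G})$; since $\bigcup \mathcal{C}$ is the union of the sets $a$ and each such seed collapses $a$ into the wellfounded part, $\mathbb{B}_{\vec{F}}$ forces $\bigcup \mathcal{C}$ into the wellfounded part, which is exactly clause (2).

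Conversely, assuming clauses (2) and (3), clause (2) guarantees that the membership assertion on the right-hand side of clause (3) is meaningful in the forcing language---i.e.\ that $(j_{\dot{G}} \restriction \check{a})^{-1}$ is forced to lie in the transitivized wellfounded part---and clause (3) then says precisely that each $F_a$ is the derived filter. By the equivalence recorded in Proposition 2.24, this is the Audrito--Steila normality criterion, completing the reverse direction.

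I expect the main obstacle to be bookkeeping rather than mathematics: one must verify that the seed $(j_{\dot{G}} \restriction \check{a})^{-1}$ appearing in clause (3) coincides with the object used in the statement of Proposition 2.24, and that clause (2) supplies exactly the well-definedness needed for clause (3)'s membership statement to parse. Once the translation between the two frameworks' notation is pinned down, no further combinatorial work is required, which is why the lemma ``follows directly'' from Proposition 2.24.
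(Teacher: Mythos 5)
Your proposal is correct and takes essentially the same route as the paper, which offers no proof beyond the remark that the lemma ``follows directly from Proposition 2.24 of Audrito--Steila''; your unpacking of that citation (the common core of directed systems of filters, plus the equivalence of clauses (2)--(3) of Definition \ref{def_NormalSystem} with their normality criterion via the derived-filter characterization) is exactly the intended argument. The one soft spot is deriving clause (2) from the ``meaningfulness'' of the membership statement in clause (3) --- such statements parse perfectly well in an illfounded ultrapower, so wellfoundedness of $\bigcup \mathcal{C}$ must instead be extracted from the fineness/normality requirements of \cite{AudritoSteila} themselves --- but this is a presentational quibble, not a change of approach.
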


Normal systems of filters unify several commonly-used notions that typically have very different representations.  The following is our ``official" definition of ideal extenders, normal towers, and normal ideals, though as noted below we will often switch to other, more traditional, representations:
\begin{definition}[Audrito-Steila~\cite{AudritoSteila}]\label{def_AS_Various}
Let $\vec{F} = \langle F_a \ : \ a \in \mathcal{C} \rangle$ be a normal system of filters with domain $\mathcal{C}$.  Then $\vec{F}$ is called:
\begin{itemize}
 \item an \textbf{ideal extender} if  $\mathcal{C} = [\lambda]^{<\omega}$ for some ordinal $\lambda$; an \textbf{extender} is an ideal extender where every $F_a$ is an ultrafilter;
 \item a \textbf{normal tower} if $\mathcal{C} = V_\lambda$ for some $\lambda$;
 \item a \textbf{normal ideal} if $\mathcal{C}$ has a $\subseteq$-largest element; equivalently,\footnote{Because directed sets of domains by definition are required to be closed under subsets.} $\mathcal{C} =  \wp(X) $ for some transitive set $X$.\footnote{The transitivity requirement is not a real restriction, because any ideal is isomorphic to an ideal with transitive support.}  
\end{itemize}
\end{definition}

As pointed out in Section 2.1 of \cite{AudritoSteila}, these notions agree with the standard definitions via the following correspondences.   A single normal ideal $I$ with (transitive) support $X$ in the usual sense corresponds to a normal system of filters $\langle F^I_a \ : \ a \in \wp(X) \rangle$ where 
\[
F^I_X:= \big\{ \{ \pi_M \ : \ M \in D \} \ : \ D \in \text{Dual}(I)   \big\}
\]
and for each $a \in \wp(X)$, $F^I_a$ is the dual of the canonical projection of $I$ to $O_a$ (via the function $\pi_{X,a}$).  Here $\pi_M$ denotes the Mostowski collapsing map of $M$.  Of course, the entire system here is generated by the single filter $F_X$, which is essentially the dual of the original ideal $I$.

If $\vec{F}$ is an ideal extender in the sense of Definition \ref{def_AS_Various}, then 
\[
\langle E_a \ : \ a \in [\lambda]^{<\omega} \rangle
\]
is an ideal extender in the sense of Claverie~\cite{claverieThesis}, where
\[
E_a:= \Big\{  \{ \text{rng}(f) \ : \ f \in D \text{ and } \text{rng}(f) \in [\kappa_a]^{|a|} \}   \ : \ D \in F_a        \Big\}
\]
and $\kappa_a$ is the cardinality of the smallest $F_a$-measure one set.  We will call $\vec{E}$ the \textbf{Claverie-style representation of $\boldsymbol{\vec{F}}$}.  Moreover, the Claverie-style forcing associated with $\vec{E}$ is equivalent to $\mathbb{B}_{\vec{F}}$ and the generic ultrapower maps are identical.  Conversely, if $\langle E_a \ : \ a \in [\lambda]^{<\omega} \rangle$ is an ideal extender in the sense of Claverie~\cite{claverieThesis}, then one can use the Claverie-style forcing $\mathbb{B}_{\vec{E}}$ associated with $\vec{E}$---which forces a generic embedding with $\lambda$ contained in the wellfounded part of the generic ultrapower---to define a sequence $\vec{F} = \langle F_a \ : \ a \in [\lambda]^{<\omega} \rangle$ by:  $X \in F_a$ iff $X \subset O_a$ and
\[
\Vdash_{\mathbb{B}_{\vec{E}}}  \ (j_{\dot{H}} \restriction \check{a})^{-1} \in j_{\dot{H}}(\check{X}) 
\]
where $j_{\dot{H}}$ is the $\mathbb{B}_{\vec{E}}$-name for the generic ultrapower map.  Then $\mathbb{B}_{\vec{F}}$ is forcing equivalent to $\mathbb{B}_{\vec{E}}$ and their generic ultrapowers are the same.

Similarly, normal towers in the sense of Definition \ref{def_AS_Various}, and normal towers in the classic sense (e.g.\ as in \cite{MattHandbook}), can be put into correspondence as follows:  if $\vec{F} = \langle F_a \ : \ a \in V_\lambda \rangle$ is a tower as in Definition \ref{def_AS_Various}, then 
\[
\langle E_a \ : \ a \in V_\lambda \rangle
\]
is a normal tower in the classic sense, where 
\[
E_a := \big\{  \{ \text{dom}(f) \ : \ f \in D  \} \ : \ D \in F_a   \big\}
\]
Conversely, if $\vec{E} = \langle E_a \ : \ a \in V_\lambda \rangle$ is a normal tower as in \cite{MattHandbook}, then
\[
\vec{F} = \langle F_a \ : \ a \in V_\lambda \rangle
\] 
is a normal tower in the sense of Definition \ref{def_AS_Various}, where
\[
F_a := \big\{  \{ \pi_M \ : \ M \in D    \} \ : \ D \in F_a   \big\}
\]
and $\pi_M$ denotes the transitive collapsing map of $M$.  Alternatively, one could instead use the (classic) generic ultrapower by the tower $\vec{E}$ in order to define $\vec{F}$, similarly to how it was done for ideal extenders.

\begin{definition}\label{def_PO}
Given normal systems of filters $\vec{F}$ and $\vec{H}$, we write $\vec{F} \ge \vec{H}$---and say \textbf{$\boldsymbol{\vec{F}}$ canonically projects to $\boldsymbol{\vec{H}}$}---iff $\vec{F}$ extends $\vec{H}$, i.e.\ $\vec{F} \restriction \text{dom}(\vec{H}) = \vec{H}$.

We write
\[
\vec{F} \sim \vec{H}
\]
and say that \textbf{$\boldsymbol{\vec{F}}$ is equivalent to $\boldsymbol{\vec{H}}$ (as normal systems of filters)}, iff there exists an isomorphism
\[
\pi: \text{ro}\big( \mathbb{B}_{\vec{F}} \big) \to \text{ro} \big( \mathbb{B}_{\vec{H}} \big)
\]
such that whenever $G$ is generic for $\mathbb{B}_{\vec{F}}$, then $j_{G} = j_{\pi[G]}$ and $\text{ult}(V,G) = \text{ult}(V,\pi[G])$.  This is clearly an equivalence relation.
\end{definition}

In the case where $\vec{F}$ and $\vec{H}$ are both normal ideals (or normal towers), the notion of canonical projection from Definition \ref{def_PO} agrees with the usual notion of canonical ideal projection (modulo the different representation of normal ideals and normal towers described above).  The main new content of Definition \ref{def_PO} is when the system being projected to---i.e.\ the $\vec{H}$ in the definition---is \emph{neither} a normal ideal \emph{nor} a tower of normal ideals.  For example, we will often be interested in cases where the $\vec{F}$ from Definition \ref{def_PO} is a normal ideal, but the $\vec{H}$ from Definition \ref{def_PO} is an ideal extender.

We remark that it is possible to have $\vec{F} \sim \vec{H}$ while $\vec{H} > \vec{F}$.  For example, suppose $\vec{F}$ is just a normal measure on $\kappa$.  One can then derive an extender of any length desired from $j_{\vec{F}}$, and this extender will be equivalent to $\vec{F}$, yet its domain will properly extend the domain of $\vec{F}$.

%Still, we do not know the answer to the following question:
%
%\begin{question}
%Can/must the partial order from Definition \ref{def_PO}, when modded out by $\sim$, have atoms?  Can/must it be nontrivial (i.e.\ can/must every point in the partial order have incompatible extensions)?  
%\end{question}

%\mymarginpar{
%If you don't have atoms, or if there is some interesting, atomless subclass, then you should look into what happens when you force with it.  But is that even interesting?  Consider the subclass of normal systems of ULTRA filters.  It's not clear if forcing with it gives you any new embeddings.  ACTUALLY it should, right?  If you're assuming that the poset is atomless, shouldn't a generic for it give you a new directed system that wasn't in the ground model?  i.e. wouldn't a generic give you a new, probably EXTERNAL, $V$-normal system of ultrafilters?
%}

Because of the correspondences described immediately after Definition \ref{def_AS_Various}, for convenience we will often switch between different representations, as in the following convention:

\begin{convention}\label{conv_IdealBelow}
Given a normal ideal $I$ with transitive support $X$, and a normal system of filters $\vec{F}$, when we write
\[
I \ge \vec{F}
\]
it is understood that we are representing $I$ as the normal system of filters $\langle E^I_a \ : \ a \in \wp(X) \rangle$ as described above.  So by $I \ge \vec{F}$ we really mean
\[
\langle E^I_a \ : \ a \in \wp(X) \rangle \ge \vec{F}
\]
\end{convention}

\begin{definition}\label{def_CanProjOfNS}
Let $\vec{F}$ be a normal system of filters.  We say that \textbf{$\boldsymbol{\vec{F}}$ is a canonical projection of NS} iff there exists some stationary set $S$ such that 
\[
\text{NS} \restriction S \ \ge \ \vec{F}
\]
as in Convention \ref{conv_IdealBelow}.
\end{definition}

%\begin{lemma}\label{lem_CharactCanProjNS}
%Let $\vec{F}=\langle F_a \ : \ a \in \mathcal{C} \rangle$ be a normal system of filters.  The following are equivalent:
%\begin{enumerate}
% \item $\vec{F}$ is a canonical projection of NS;
% \item For all sufficiently large regular $\theta$ there is a stationary $S \subseteq \wp(H_\theta)$ such that:
% \[
% \forall a \in \mathcal{C} \ \forall D \subseteq O_a \ D \in F_a \iff S \searrow_a D \in \text{Dual}(\text{NS} \restriction S)
% \]
%\end{enumerate}
%\end{lemma}
%\begin{proof}
%First suppose $\vec{F}$ is a canonical projection of NS.  By lifting stationary sets, we can without loss of generality assume $\vec{F} \le \text{NS} \restriction S$ where $S$ is a stationary set of elementary submodels of some $H_\theta$.
%\end{proof}

\begin{definition}
Given any class $\Gamma$ of normal systems of filters, we say that \textbf{NS is universal for $\boldsymbol{\Gamma}$} iff every member of $\Gamma$ is a canonical projection of NS.
\end{definition}

Then Burke's Theorem \ref{thm_Burke} can be rephrased as:  \emph{NS is universal for the class of all normal ideals and normal towers of inaccessible height.}

As with Burke's argument for Theorem \ref{thm_Burke}, it will be convenient to be able to characterize Definition \ref{def_CanProjOfNS} in terms of ``good" structures (as introduced by Foreman-Magidor~\cite{MR1359154}).  We also generalize self-genericity to such systems:

\begin{definition}\label{def_GoodSelfGen}
Given a normal system $\vec{F} = \langle F_a \ : \ a \in \mathcal{C} \rangle$ of filters, and some $M \prec (H_\theta, \in, \vec{F})$ where $\vec{F} \in H_\theta$, let $\sigma_M: H_M \to H_\theta$ be the inverse of the  Mostowski collapse map of $M$.  For each $b \in M$ we let $b_M$ denote $\sigma_M^{-1}(b)$.  For each $a \in M \cap \mathcal{C}$ let
\[
 U^{M,a}:= \{ X \in \wp^{H_M}\big( (O_a)_M \big) \ : \  \sigma_M^{-1} \restriction a \in \sigma_M(X)   \}
\]
and let $\vec{U}^M:= \langle U^{M,a} \ : \ a \in M \cap \mathcal{C} \rangle$.

We say that:
\begin{itemize}
 \item \textbf{$\boldsymbol{M}$ is $\boldsymbol{\vec{F}}$-good} iff $\vec{U}^M$ pointwise extends the system $\big( \vec{F} \big)_M$; i.e.\ iff $U^{M,a} \supseteq (F_a)_M$ for every $a \in M \cap \mathcal{C}$.
 \item \textbf{$\boldsymbol{M}$ is $\boldsymbol{\vec{F}}$-self-generic} iff $\vec{U}^M$ is generic over $H_M$ for the poset $\big( \mathbb{B}_{\vec{F}} \big)_M$, where $\mathbb{B}_{\vec{F}}$ is the forcing associated with $\vec{F}$ as described before Definition \ref{def_NormalSystem}.
\end{itemize}
\end{definition}
It is routine to check that:
\begin{itemize}
 \item $M$ is $\vec{F}$-good iff for all $a \in M \cap \mathcal{C}$ and all $D \in M \cap F_a$: $\pi_M \restriction a \in D$.
 \item $M$ is $\vec{F}$-self-generic iff for every $\mathcal{A} \in M$ that is a maximal antichain in $\mathbb{B}_{\vec{F}}$, there is some $(a,S) \in \mathcal{A} \cap M$ such that $\pi_M \restriction a \in S$.  
\end{itemize}

The following technical fact will be used in Section \ref{sec_CharSuper}:
\begin{fact}\label{fact_TransBelowKappa}
Suppose $\vec{F}$ is an ideal extender with critical point $\kappa$, and $M \prec (H_\theta,\in,\vec{F})$ is $\vec{F}$-good.  Then $M \cap \kappa \in \kappa$.
\end{fact}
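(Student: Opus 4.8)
The plan is to run everything through the combinatorial reformulation of $\vec{F}$-goodness recorded immediately after Definition \ref{def_GoodSelfGen}: $M$ is $\vec{F}$-good iff $\pi_M \restriction a \in D$ for every $a \in M \cap \mathcal{C}$ and every $D \in M \cap F_a$. Two preliminary observations set the stage. First, $\kappa = \text{crit}(\vec{F})$ is definable from the named predicate $\vec{F}$, so $\kappa \in M$; moreover ``$\text{crit}(\vec{F}) = \kappa$'' should be read as $\Vdash_{\mathbb{B}_{\vec{F}}}\text{crit}(j_{\dot{G}}) = \kappa$, so that $\Vdash j_{\dot{G}}(\beta) = \beta$ for each $\beta < \kappa$ and $\Vdash \kappa < j_{\dot{G}}(\kappa)$. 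Second, since $\vec{F}$ is an ideal extender with critical point $\kappa$, its domain is $\mathcal{C} = [\lambda]^{<\omega}$ with $\kappa < \lambda$ (the critical point of a nontrivial extender lies below its length), so every singleton $\{\beta\}$ with $\beta \le \kappa$ lies in $M \cap \mathcal{C}$. The engine of the proof is to exhibit, for each such singleton, an explicit definable member of $F_{\{\beta\}}$ and then read off what goodness says about $\pi_M$.

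First I would treat the ordinals below $\kappa$. Fix $\beta \in M \cap \kappa$ and put $D_\beta := \{f \in O_{\{\beta\}} : \beta \in \text{dom}(f) \wedge f(\beta) = \beta\}$, which is definable from $\beta$ and hence lies in $M$. The seed appearing in clause (3) of Definition \ref{def_NormalSystem} is $(j_{\dot{G}} \restriction \{\beta\})^{-1}$, i.e.\ the inverse of the function $\beta \mapsto j_{\dot{G}}(\beta)$; since $j_{\dot{G}}(\beta) = \beta$ is forced, this seed is (forced to be) the function $\beta \mapsto \beta$. Applying $j_{\dot{G}}$ to the definition of $D_\beta$ gives $j_{\dot{G}}(D_\beta) = \{f : j_{\dot{G}}(\beta) \in \text{dom}(f) \wedge f(j_{\dot{G}}(\beta)) = j_{\dot{G}}(\beta)\}$, and the seed clearly belongs to this set; hence $D_\beta \in F_{\{\beta\}}$ by Definition \ref{def_NormalSystem}(3). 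Goodness now forces $\pi_M \restriction \{\beta\} \in D_\beta$, i.e.\ $\pi_M(\beta) = \beta$. Thus $\pi_M$ is the identity on $M \cap \kappa$. Since the collapse carries the ordinals of $M$ below $\kappa$ order-isomorphically onto the ordinal $\pi_M(\kappa) = \text{ot}(M \cap \kappa)$ while fixing each of them, we get $M \cap \kappa = \pi_M[M \cap \kappa] = \pi_M(\kappa)$; in particular $M \cap \kappa$ is an ordinal.

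It remains to see $M \cap \kappa \ne \kappa$, i.e.\ $\pi_M(\kappa) < \kappa$, and here I would exploit that $\kappa$ itself is moved. Set $D^* := \{f \in O_{\{\kappa\}} : \kappa \in \text{dom}(f) \wedge f(\kappa) < \kappa\}$, again definable from $\kappa$ and so in $M$. The seed $(j_{\dot{G}} \restriction \{\kappa\})^{-1}$ sends $j_{\dot{G}}(\kappa) \mapsto \kappa$, and because $\kappa < j_{\dot{G}}(\kappa)$ is forced this seed lies in $j_{\dot{G}}(D^*) = \{f : j_{\dot{G}}(\kappa) \in \text{dom}(f) \wedge f(j_{\dot{G}}(\kappa)) < j_{\dot{G}}(\kappa)\}$; hence $D^* \in F_{\{\kappa\}}$. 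Goodness then forces $\pi_M \restriction \{\kappa\} \in D^*$, i.e.\ $\pi_M(\kappa) < \kappa$. Combining with the previous paragraph, $M \cap \kappa = \pi_M(\kappa) < \kappa$, which is exactly $M \cap \kappa \in \kappa$.

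I expect the only genuine obstacle to be the bookkeeping around the seed $(j_{\dot{G}} \restriction a)^{-1}$: one must confirm that, as an element of the ultrapower's $j_{\dot{G}}(O_a)$, it is literally the function with domain $j_{\dot{G}}[a]$ inverting $j_{\dot{G}}$ on $a$, and that applying $j_{\dot{G}}$ to the definitions of $D_\beta$ and $D^*$ yields exactly the displayed sets, so that the two forced facts $j_{\dot{G}}(\beta) = \beta$ and $\kappa < j_{\dot{G}}(\kappa)$ are precisely what place the seed inside them. Everything else—the definability of $D_\beta$ and $D^*$ guaranteeing membership in $M$, and the passage from ``$\pi_M$ is the identity on $M \cap \kappa$'' to ``$M \cap \kappa$ is an ordinal''—is routine.
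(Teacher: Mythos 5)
Your proof is correct and rests on the same central device as the paper's: your singleton $D_\beta=\bigl\{\{\langle\beta,\beta\rangle\}\bigr\}$ is precisely the set $C$ that the paper places in $F_{\{\eta_M\}}$ to force the collapse to fix ordinals below $\kappa$, the only structural difference being that the paper runs this once, by contradiction, at the least $\eta_M\in M$ not contained in $M$, whereas you run it at every $\beta\in M\cap\kappa$ directly. Your extra $D^*$ step (using $\Vdash_{\mathbb{B}_{\vec{F}}}\kappa<j_{\dot G}(\kappa)$ to conclude $\pi_M(\kappa)<\kappa$) is a worthwhile addition: it explicitly rules out the case $\kappa\subseteq M$, which the paper's choice of $\eta_M<\kappa$ quietly presupposes cannot occur.
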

\begin{proof}
Suppose not; then letting $\eta_M$ be the least element of $M$ that is not a subset of $M$, we have $\eta_M < \kappa$ and
\begin{equation}\label{eq_SendsEtaM}
\pi_M(\eta_M) < \eta_M \ 
\end{equation}
where $\pi_M$ is the Mostowski collapsing map of $M$.  

Let $C:= \big\{ \{ \langle \eta_M, \eta_M \rangle \} \big\}$; i.e.\ $C$ is the singleton whose only element is the function with domain $\{ \eta_M \}$ that sends $\eta_M$ to itself.  Note that $C \in M$.  Since $\kappa$ is the critical point of $\vec{F}$ and $\eta_M < \kappa$, the generic ultrapower always fixes $\eta_M$, so
\[
\Vdash_{\mathbb{B}_{\vec{F}}} \ (j_{\dot{G}} \restriction \{ \eta_M \})^{-1}  = \{ \langle \eta_M, \eta_M \rangle \} \in j_{\dot{G}}(C)  = C
\]
This implies that $C \in F_{ \{ \eta_M \} }$.  Since both $C$ and $\{ \eta_M \}$ are in $M$ and $M$ is $\vec{F}$-good, then $\pi_M \restriction \{ \eta_M \} \in C$; i.e.\ $\pi_M$ fixes $\eta_M$.  This contradicts \eqref{eq_SendsEtaM}.
\end{proof}

If $\vec{U}$ is a normal system of \emph{ultra}filters and $M \prec (H_\theta, \in, \vec{U})$, then  $\mathbb{B}_{\vec{U}}$ and thus its preimage under $\sigma_M$ is the trivial forcing.  It follows that:
\begin{observation}
Suppose $\vec{U}$ is a normal system of ultrafilters.  Then for any $M \prec (H_\theta, \in, \vec{U})$: $M$ is $\vec{U}$-good if and only if $M$ is $\vec{U}$-self-generic.
\end{observation}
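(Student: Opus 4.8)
The plan is to take seriously the remark preceding the statement: because $\vec{U}$ consists of ultrafilters, the poset $\mathbb{B}_{\vec{U}}$ is trivial, and I will distill this into the one combinatorial fact that makes both implications immediate, namely that \emph{every maximal antichain of $\mathbb{B}_{\vec{U}}$ is a singleton}. Granting this, the two routine characterizations recorded just above Fact \ref{fact_TransBelowKappa}---``$M$ is $\vec{U}$-good iff $\pi_M \restriction a \in D$ for every $a \in M \cap \mathcal{C}$ and $D \in M \cap U_a$'', and ``$M$ is $\vec{U}$-self-generic iff every maximal antichain $\mathcal{A} \in M$ has an element $(a,S)$ with $\pi_M \restriction a \in S$''---literally become the same assertion about singleton antichains.

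First I would check that any two conditions of $\mathbb{B}_{\vec{U}}$ are compatible. Since each $U_a$ is an ultrafilter we have $U_a^+ = U_a$, so the conditions are exactly the pairs $(a,S)$ with $S \in U_a$. Given conditions $(a,S)$ and $(b,T)$, put $c = a \cup b$ and $R = \pi_{c,a}^{-1}[S] \cap \pi_{c,b}^{-1}[T]$. The projection property of a directed system of filters gives $\pi_{c,a}^{-1}[S] \in U_c$ and $\pi_{c,b}^{-1}[T] \in U_c$, and since $U_c$ is a filter, $R \in U_c = U_c^+$; thus $(c,R)$ is a condition. As $c \cup a = c$ and $R \subseteq \pi_{c,a}^{-1}[S]$, the relevant set difference is empty and $(c,R) \le (a,S)$ in the ordering of $\mathbb{B}_{\vec{U}}$; symmetrically $(c,R) \le (b,T)$. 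Hence no two conditions are incompatible, so $\mathbb{B}_{\vec{U}}$ has no antichain of size at least two, every maximal antichain is a singleton $\{(a,S)\}$ with $S \in U_a$, and---being a first-order fact about $\mathbb{B}_{\vec{U}} \in H_\theta$---this is inherited by $(\mathbb{B}_{\vec{U}})_M = \sigma_M^{-1}[\mathbb{B}_{\vec{U}}]$ inside $H_M$ by elementarity.

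With the singleton fact in hand, both directions are immediate. For self-generic $\Rightarrow$ good, fix $a \in M \cap \mathcal{C}$ and $D \in M \cap U_a$; then $\{(a,D)\}$ is a maximal antichain lying in $M$, so self-genericity forces its unique element $(a,D)$ to witness $\pi_M \restriction a \in D$, which is exactly goodness. For good $\Rightarrow$ self-generic, let $\mathcal{A} \in M$ be an arbitrary maximal antichain; by the singleton fact $\mathcal{A} = \{(a,S)\}$ with $(a,S) \in M$, whence $S \in M \cap U_a$ and goodness supplies $\pi_M \restriction a \in S$, the required witness. The sole piece of genuine content is the compatibility computation establishing triviality; I anticipate no real obstacle, only the bookkeeping of keeping the restriction maps $\pi_{c,a}$ straight and confirming that $(c,R)$ really refines both conditions, both of which reduce to the empty-set-difference observation above.
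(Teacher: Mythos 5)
Your proof is correct and follows exactly the route the paper takes: the paper's entire argument is the one-line remark that $\mathbb{B}_{\vec{U}}$ (and hence its collapse) is trivial forcing, and your compatibility computation showing every maximal antichain is a singleton is just the honest verification of that remark. Nothing is missing; you have simply written out the details the paper leaves implicit.
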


The next observation will be used in Section \ref{sec_CharSuper}:
\begin{observation}\label{obs_DerivedIsElement}
Suppose $\vec{U}$ is a normal system of ultrafilters and that $M \prec (H_\theta, \in, \vec{U})$ is $\vec{U}$-good.  Then $\vec{U}^M$---i.e.\ the normal system derived from $\sigma_M$ as in Definition \ref{def_GoodSelfGen}---is an element of $H_M$.  
\end{observation}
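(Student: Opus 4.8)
The plan is to exploit the hypothesis that $\vec{U}$ consists of \emph{ultra}filters: I will show that $\vec{U}$-goodness, which a priori only gives that the derived objects $U^{M,a}$ \emph{extend} the collapsed filters, in fact forces them to \emph{equal} the collapsed filters, because an ultrafilter admits no proper filter extension on the same Boolean algebra. Once this is in hand, $\vec{U}^M$ is literally the collapse $\sigma_M^{-1}(\vec{U})$, which lies in $H_M$ because $\vec{U}$ is a distinguished predicate of the structure and hence $\vec{U} \in M$. Throughout write $\vec{U} = \langle F_a \ : \ a \in \mathcal{C} \rangle$, so that each $F_a$ is an ultrafilter.

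The key step is the claim that $U^{M,a} = (F_a)_M$ for every $a \in M \cap \mathcal{C}$. First I would observe that $U^{M,a}$ is itself an ultrafilter on $\wp^{H_M}\big((O_a)_M\big)$: setting $p := \pi_M \restriction a \in O_a$, the definition says that $X \in U^{M,a}$ iff $p \in \sigma_M(X)$, so $U^{M,a}$ is exactly the $\sigma_M$-preimage of the principal ultrafilter on $O_a$ generated by $p$. Since $\sigma_M$ carries the Boolean operations of $\wp^{H_M}\big((O_a)_M\big)$ to those of $\wp(O_a)$ --- in particular $\sigma_M\big((O_a)_M \setminus X\big) = O_a \setminus \sigma_M(X)$ --- the collection $U^{M,a}$ is closed under intersections and supersets inside $H_M$, is proper (as $p \notin \emptyset$), and decides every member of $\wp^{H_M}\big((O_a)_M\big)$; thus it satisfies the ultrafilter axioms relative to $\wp^{H_M}\big((O_a)_M\big)$. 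On the other side, since $a \in M$ and $\vec{U} \in M$ is a parameter, $F_a \in M$, and by elementarity of $\sigma_M$ the collapse $(F_a)_M$ is an ultrafilter on $\wp^{H_M}\big((O_a)_M\big)$ in the sense of $H_M$ (this is the one genuinely necessary use of the ultrafilter hypothesis). Now $\vec{U}$-goodness is precisely the inclusion $(F_a)_M \subseteq U^{M,a}$, and maximality closes the gap: given $X \in U^{M,a}$, its complement $(O_a)_M \setminus X$ cannot lie in $(F_a)_M$ (else $\emptyset \in U^{M,a}$, contradicting properness), so by the ultrafilter property of $(F_a)_M$ we get $X \in (F_a)_M$. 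Hence $U^{M,a} = (F_a)_M$.

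Finally I would assemble the system. Under the bijection $a \mapsto a_M$ from $M \cap \mathcal{C}$ onto $(\mathcal{C})_M = \sigma_M^{-1}(\mathcal{C})$, the equality $U^{M,a} = (F_a)_M = \sigma_M^{-1}(F_a)$ exhibits $\vec{U}^M$ as the function $\bar a \mapsto \sigma_M^{-1}\big(F_{\sigma_M(\bar a)}\big)$ on $(\mathcal{C})_M$, which is exactly $\sigma_M^{-1}(\vec{U}) = \big(\vec{U}\big)_M$. Since $\vec{U} \in M$, we conclude $\big(\vec{U}\big)_M \in H_M$, and therefore $\vec{U}^M \in H_M$.

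I expect the main obstacle to be bookkeeping rather than anything deep: one must keep straight the three index sets ($M \cap \mathcal{C}$, its collapse $(\mathcal{C})_M$, and the corresponding collapses $(O_a)_M$ of the $O_a$'s) and verify that $\sigma_M$ intertwines membership of the fixed point $p = \pi_M \restriction a$ with the collapsed Boolean structure. The conceptual crux, however, is the passage from ``extends'' to ``equals'' in the key step: without the ultrafilter hypothesis, goodness yields only $(F_a)_M \subseteq U^{M,a}$ and the two could differ, so it is exactly the maximality of $(F_a)_M$ that both requires the filters to be ultrafilters and delivers the desired membership in $H_M$.
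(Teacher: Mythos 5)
Your proposal is correct and is essentially the paper's own argument: the paper likewise defines $\vec{U}_M := \sigma_M^{-1}(\vec{U}) \in H_M$, notes that for each $a \in M \cap \mathcal{C}$ both $U^{M,a}$ and $(U_M)_{a_M}$ are ultrafilters on $\wp^{H_M}\big((O_a)_M\big)$, and concludes equality from the inclusion supplied by $\vec{U}$-goodness, since an ultrafilter has no proper extension. You simply spell out in more detail why $U^{M,a}$ is an ultrafilter (as the $\sigma_M$-preimage of the principal ultrafilter at $\pi_M \restriction a$) and how maximality closes the gap, which the paper leaves implicit.
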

\begin{proof}
Let $\vec{U}_M:= \sigma_M^{-1}(\vec{U})$ where $\sigma_M: H_M \to H_\theta$ is the inverse of the Mostowski collapse of $M$.  We prove that $\vec{U}_M = \vec{U}^M$.  Pick any $a \in M \cap \mathcal{C}$ and let $a_M:= \sigma_M^{-1}(a)$.  Now both $U^M_{a_M}$ and $(U_M)_{a_M}$ are ultrafilters on $\wp^{H_M}(O_{a_M})$, so to prove they are equal it suffices to show that one of them extends the other.  And $U^M_{a_M}$ extends $(U_M)_{a_M}$ because $M$ is $\vec{U}$-good.
\end{proof}

Next we generalize Foreman's~\cite{MattHandbook} construction of the ``conditional club filter relative to a normal ideal", where we replace the normal ideal by any normal system of filters.  The main new content of this definition is when the normal system of filters is an ideal extender.  The following notation is the appropriate analogue of the notation of Feng-Jech~\cite{MR1668171} in the current setting:
\begin{definition}\label{def_over}
Given a collection $R$ of elementary submodels of some $H_\theta$, and given sets $a$ and $T$, set
\[
R \searrow_a T:= \{ M \in R \ : \ \pi_M \restriction a \in T \}
\]
where $\pi_M$ denotes the transitive collapsing map of $M$.
\end{definition}

\begin{definition}
Given a normal system $\vec{F} = \langle F_a \ : \ a \in \mathcal{C} \rangle$ of filters, and a collection $R$ of elementary submodels of some $H_\theta$, we say that \textbf{$\boldsymbol{R}$ is $\boldsymbol{\vec{F}}$-projective stationary} iff for every $a \in \mathcal{C}$ and every $T \in F_a^+$, the set $R \searrow_a T$ (as defined in Definition \ref{def_over}) is stationary.
\end{definition}

\begin{lemma}\label{lem_Equiv_ProjNS}
Let $\vec{F}=\langle F_a \ : \ a \in \mathcal{C} \rangle$ be a normal system of filters.  The following are equivalent:
\begin{enumerate}
 \item\label{item_SomeIdeal} There exists some normal ideal $I$ such that $I \ge \vec{F}$ as in Convention \ref{conv_IdealBelow};
 \item\label{item_ProjOfNS} $\vec{F}$ is a canonical projection of NS as in Definition \ref{def_CanProjOfNS};
 \item\label{item_ProjStat} For all sufficiently large $\theta$, the set of $\vec{F}$-good substructures of $H_\theta$ is $\vec{F}$-projective-stationary.
\end{enumerate}

\textbf{Moreover:}  if each $F_a$ is an ultrafilter, then replacing ``$\vec{F}$-projective-stationary" in item \ref{item_ProjStat} with ``stationary" results in an equivalent statement.
\end{lemma}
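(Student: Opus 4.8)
The plan is to run the cycle \ref{item_ProjStat} $\Rightarrow$ \ref{item_ProjOfNS} $\Rightarrow$ \ref{item_SomeIdeal} $\Rightarrow$ \ref{item_ProjStat}, and then the ``moreover''. The implication \ref{item_ProjOfNS} $\Rightarrow$ \ref{item_SomeIdeal} is immediate, since for any stationary $S$ the ideal $\mathrm{NS}\restriction S$ is itself a normal ideal and so witnesses \ref{item_SomeIdeal}. The engine for the remaining two implications is one bookkeeping computation. Fix a large $\theta$ with $\vec F\in H_\theta$ and $\bigcup\mathcal C\subseteq H_\theta$, and for any stationary collection $R$ of substructures of $H_\theta$ unwind Convention \ref{conv_IdealBelow}: representing $\mathrm{NS}\restriction R$ as a system with domain $\wp(H_\theta)$, one reads off directly from the definition of $F^{\mathrm{NS}\restriction R}_a$ that for every $a\in\mathcal C$ and every $Z\subseteq O_a$,
\[
Z\in\big(F^{\mathrm{NS}\restriction R}_a\big)^+\ \iff\ R\searrow_a Z\ \text{is stationary.}
\]
Hence $\mathrm{NS}\restriction R\ge\vec F$ (as in Convention \ref{conv_IdealBelow}) is \emph{equivalent} to: $Z\in F_a^+\iff R\searrow_a Z$ is stationary, for all $a\in\mathcal C$ and all $Z\subseteq O_a$.

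For \ref{item_ProjStat} $\Rightarrow$ \ref{item_ProjOfNS}, I would take $S$ to be the set of $\vec F$-good $M\prec(H_\theta,\in,\vec F)$; by \ref{item_ProjStat} applied with $Z=O_a$ for some $a\in\mathcal C$, $S$ is stationary, and the claim is that $\mathrm{NS}\restriction S\ge\vec F$ via the displayed equivalence. The forward direction ($Z\in F_a^+\Rightarrow S\searrow_a Z$ stationary) is precisely the $\vec F$-projective-stationarity of $S$, i.e.\ \ref{item_ProjStat}. For the reverse direction I argue contrapositively: if $Z\in\mathrm{Dual}(F_a)$ then $O_a\setminus Z\in F_a$, so for every $\vec F$-good $M$ with $a,Z\in M$ the goodness criterion applied to $D=O_a\setminus Z\in M\cap F_a$ gives $\pi_M\restriction a\in O_a\setminus Z$, whence $M\notin S\searrow_a Z$; thus $S\searrow_a Z\subseteq\{M:a\notin M\text{ or }Z\notin M\}$, which is nonstationary. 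Note that \emph{goodness alone}, not self-genericity, is what is needed here.

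For \ref{item_SomeIdeal} $\Rightarrow$ \ref{item_ProjStat}, let $I\ge\vec F$ be a normal ideal with transitive support $X\supseteq\bigcup\mathcal C$. The key reduction is that \emph{every $M$ good for $I$ (that is, $\pi_M\restriction X\in D$ for all $D\in M\cap F^I_X$) is $\vec F$-good}: since $I\ge\vec F$ we have $F_a=F^I_a$, and $F^I_a$ is the canonical projection of $F^I_X$ along $\pi_{X,a}$, so for $a\in\mathcal C\cap M$ and $D\in F_a\cap M$ the pullback $\pi_{X,a}^{-1}[D]$ lies in $M\cap F^I_X$; goodness for $I$ yields $\pi_M\restriction X\in\pi_{X,a}^{-1}[D]$, i.e.\ $\pi_M\restriction a\in D$. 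Moreover, if $T\in F_a^+=(F^I_a)^+$, then unwinding the projection shows the set $\widehat P:=\{N\subseteq X:\pi_N\restriction a\in T\}$ is $I$-positive. Thus it suffices to invoke the classical fact that each $I$-positive set reflects to stationarily many $M\prec H_\theta$ that are good for $I$ and satisfy $M\cap X\in\widehat P$ (equivalently $\pi_M\restriction a\in T$); combined with the reduction, this delivers the $\vec F$-projective-stationarity of the $\vec F$-good structures. \textbf{This single-ideal reflection is the main obstacle and the only nonroutine ingredient.} It is exactly the phenomenon underlying Burke's Theorem \ref{thm_Burke} and the Foreman--Magidor~\cite{MR1359154} theory of good structures, proved via the generic ultrapower $j_G:V\to N$ of $I$ (whose seed $j_G\restriction X$, equivalently $[\mathrm{id}]_G$, lies in $N$) together with the normality of $I$ (a diagonal-intersection/Fodor argument securing goodness along an $\omega$-chain); I would cite Burke~\cite{MR1472122} and Feng--Jech~\cite{MR1668171} rather than reprove it.

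Finally, for the ``moreover'', suppose each $F_a$ is an ultrafilter. Then $\mathbb B_{\vec F}$ is trivial, so $\vec F$-good and $\vec F$-self-generic coincide (as observed immediately before Observation \ref{obs_DerivedIsElement}), and, crucially, $F_a^+=F_a$. Projective-stationarity trivially implies stationarity (take $Z=O_a$). Conversely, if the set $R$ of $\vec F$-good structures is merely stationary, fix $a\in\mathcal C$ and $T\in F_a^+=F_a$; for every $\vec F$-good $M$ with $a,T\in M$ the goodness criterion gives $\pi_M\restriction a\in T$, so $R\searrow_a T\supseteq R\cap\{M:a,T\in M\}$, which is stationary as the intersection of the stationary set $R$ with a club. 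Hence $R$ is $\vec F$-projective-stationary, so in \ref{item_ProjStat} ``projective-stationary'' may be replaced by ``stationary'' without loss.
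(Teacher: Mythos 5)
Your proof is correct, and its essential ingredients coincide with the paper's: the single-normal-ideal case (Burke's Theorem \ref{thm_Burke} plus the Foreman--Magidor good-structure machinery), the propagation of goodness from the top filter of a normal ideal to all of its canonical projections, and the translation between $D \in F_a$ and $S \searrow_a D \in \text{Dual}(\text{NS} \restriction S)$. The difference is in the decomposition of the implication cycle. The paper first proves \ref{item_SomeIdeal} $\Leftrightarrow$ \ref{item_ProjOfNS} (the forward direction being Burke's theorem plus transitivity of $\ge$), and then proves \ref{item_ProjOfNS} $\Rightarrow$ \ref{item_ProjStat} by an explicit pressing-down argument showing that the non-$\vec F$-good members of the witnessing stationary set form a nonstationary set. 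You instead prove \ref{item_SomeIdeal} $\Rightarrow$ \ref{item_ProjStat} directly, by reducing $\vec F$-goodness to $I$-goodness (your pullback computation $D \in F_a \cap M \mapsto \pi_{X,a}^{-1}[D] \in F^I_X \cap M$ is correct and is essentially the content of \eqref{eq_Translate}) and then citing as a black box that each $I$-positive set reflects to stationarily many $I$-good structures. Be aware that this citation is doing real work: Theorem \ref{thm_Burke} as stated only asserts the existence of \emph{some} stationary $S_I$ with $\text{NS} \restriction S_I \ge I$, so to get the reflection statement you invoke one still needs the Fodor argument showing that almost every member of $S_I$ is $I$-good --- which is precisely the step the paper writes out in its \ref{item_ProjOfNS} $\Rightarrow$ \ref{item_ProjStat} direction. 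Your \ref{item_ProjStat} $\Rightarrow$ \ref{item_ProjOfNS} argument and the ``moreover'' clause match the paper's.
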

\begin{proof}
\ref{item_ProjOfNS} implies \ref{item_SomeIdeal} because the nonstationary ideal is normal.  To see that  \ref{item_SomeIdeal} implies  \ref{item_ProjOfNS}: given a normal $I$ such that $I \ge \vec{F}$, let $S_I$ be the stationary set from Burke's theorem such that $\text{NS} \restriction S_I \ge I$.  The relation in Definition \ref{def_PO} is clearly transitive, so $\text{NS} \restriction S_I \ge \vec{F}$.

Next we prove \ref{item_ProjOfNS} $\implies$ \ref{item_ProjStat}.  Let $S$ be the stationary set witnessing \ref{item_ProjOfNS}, i.e.\ so that $\text{NS} \restriction S$ projects canonically to $\vec{F}$.  By lifting stationary sets if necessary, we can WLOG assume that $S$ is a stationary collection of elementary substructures of some $H_\theta$.  Identifying $\text{NS} \restriction S$ as a normal system of filters with largest member
\[
F_{H_\theta} = \Big\{ \{ \pi_M \ : \ M \in C  \} \ : \ C \in \text{Dual}(\text{NS} \restriction S)   \Big\}
\]
as described after Definition \ref{def_AS_Various}, we can translate assumption \ref{item_ProjOfNS} as follows:
\begin{align}\label{eq_Translate}
\begin{split}
\forall a \in \mathcal{C} \ \forall D \subseteq O_a \  \   D \in F_a & \iff \pi_{H_\theta,a}^{-1}[D] \in F_{H_\theta} \\
& \iff \{ \pi_M \ : \ M \in S \text{ and }  \pi_M \restriction a \in D   \} \in F_{H_\theta} \\
& \iff \{ M \in S \ : \ \pi_M \restriction a \in D   \} \in \text{Dual}(\text{NS} \restriction S) \\
& \iff S \searrow_a D \in \text{Dual}(\text{NS} \restriction S)
\end{split}
\end{align}

Let $R$ denote the collection of $\vec{F}$-good members of $S$, let $a \in \mathcal{C}$, and $T \in F^+_a$; we need to prove that $R \searrow_a T$ is stationary.  Now $S \searrow_a T$ is a stationary subset of $S$ by \eqref{eq_Translate}, so to finish the proof it suffices to prove that $S \setminus R$ is nonstationary.  Suppose toward a contradiction that $S \setminus R$ is stationary; then for stationarily many $M \in S$ there is a pair $(a_M,D_M) \in M \cap (\mathcal{C} \times F_{a_M})$ such that $\pi_M \restriction  a_M \notin D_M$.  Then by Fodor's Lemma there is a stationary $S' \subseteq S$ and some fixed $(a^*, D^*) \in \mathcal{C} \times F_{a^*}$ such that 
\begin{equation}\label{eq_NotInDstar}
\pi_M \restriction a^* \notin D^* \text{ for all } M \in S'
\end{equation}  

Since $D^* \in F_{a^*}$, by \eqref{eq_Translate} we have
\[
\widetilde{D}^*:=  \{ M \in S \ : \ \pi_M \restriction a^* \in D^*  \} \in \text{Dual}(\text{NS} \restriction S) .
\]
But \eqref{eq_NotInDstar} implies $\widetilde{D}^* \cap S' =\emptyset$, contradicting that $S'$ is a stationary subset of $S$.
 
Finally we prove \ref{item_ProjStat}
 $\implies$ \ref{item_ProjOfNS}.  Let $S$ denote the set of $\vec{F}$-good elementary substructures of $H_\theta$, and again identify $\text{NS} \restriction S$ with a normal system with largest member $F_{H_\theta}$ .  We need to prove that
 
 \[
 \forall a \in \mathcal{C} \ \ \forall D \subseteq O_a \ D \in F_a \iff \pi^{-1}_{H_\theta,a}[D] \in F_{H_\theta} 
 \] 
 which (by the same argument as \eqref{eq_Translate}) is equivalent to proving 
 \[
 \forall a \in \mathcal{C} \ \ \forall D \subseteq O_a \ D \in F_a \iff S \searrow_a D \in \text{Dual}(\text{NS} \restriction S) .
 \] 
So fix some $a \in \mathcal{C}$ and some $D \subseteq O_a$.  If $D \in F_a$ then for every $M \in S$ with $(a,D) \in M$, we have $\pi_M \restriction a \in D$ by definition of goodness.  This collection constitutes a measure one set in $\text{NS} \restriction S$.  If $D \notin F_a$ then $D^c \in F_a^+$, so assumption \eqref{item_ProjStat} ensures there are stationarily many $M \in S$ such that $\pi_M \restriction a \in D^c$.  So  
\[
S \searrow_a D^c= \{ M \in S \ : \  \pi_M \restriction a \in D^c \} \text{ is stationary}
\] 
and hence the complement of $S \searrow_a D^c$---i.e.\ $S \searrow_a D$---is not in the dual filter of $\text{NS} \restriction S$.

For the ``moreover" part:  let $R$ denote the set of $\vec{F}$-good elementary substructures of $H_\theta$, and assume that $R$ is stationary; we need to prove that $R$ is $\vec{F}$-projective stationary.  So let $a \in \mathcal{C}$ and $T \in F^+_a$.  Since $F_a$ is an ultrafilter then in fact $T \in F_a$.  There are stationarily many $M \in R$ such that $(a,T) \in M$, and since any such $M$ is $\vec{F}$-good and $T \in F_a$ then $\pi_M \restriction a \in T$.
\end{proof}

%\mymarginpar{SHOULD YOU ALSO GENERALIZE PROJECTIVE ANTICHAIN CATCHING?  i.e. $\vec{F}$ has projective antichain catching (by definition) iff there are $\vec{F}$-projective stationarily many $\vec{F}$-self-generic structures; which should be equivalent to saying there is some ideal $I$ that projects to $\vec{F}$ and this is a forcing projection; etc. }

It will be convenient to see how Definition \ref{def_PO} translates into the setting where we view ideal extenders in the Claverie~\cite{claverieThesis} form.  

\begin{definition}\label{def_GoodClaverie}
Let $\vec{F} = \langle F_a \ : \ a \in [\lambda]^{<\omega} \rangle$ be an ideal extender as in Definition \ref{def_AS_Various} with critical point $\kappa$, and $\vec{E} = \langle E_a \ : \ a \in [\lambda]^{<\omega} \rangle$ be its Claverie-style representation (as described after Definition \ref{def_AS_Various}).\footnote{So in particular each $E_a$ is a filter on $[\kappa_a]^{|a|}$ where $\kappa_a$ is the minimum cardinality of a set in $E_a$.}  Let $M \prec (H_\theta,\in,\vec{E})$ and $\sigma_M:H_M \to H_\theta$ be the inverse of the Mostowski collapse of $M$.  We say  \textbf{$\boldsymbol{M}$ is $\boldsymbol{\vec{E}}$-good} iff the classic $(\kappa_M, \lambda_M)$-extender $\vec{U}^M$ derived from $\sigma_M$ extends the system $\sigma_M^{-1}(\vec{E})$.  In other words, $M$ is $\vec{E}$-good iff 
\[
\big\{ X \in H_M \ : \  a \in \sigma_M(X) \big\} \supseteq \sigma_M^{-1}(E_a)
\]
for every $a \in M \cap [\lambda]^{<\omega}$.
\end{definition}

The following lemmas are routine to check, and we leave the proofs to the interested reader.
\begin{lemma}
Let $\vec{F}$ be an ideal extender as in Definition \ref{def_AS_Various}, and $\vec{E}$ be its equivalent Claverie-style representation.  Given any $M \prec (H_\theta, \in, \vec{F})$:  $M$ is $\vec{F}$-good in the sense of Definition \ref{def_GoodSelfGen} if and only if $M$ is $\vec{E}$-good in the sense of Definition \ref{def_GoodClaverie}.
\end{lemma}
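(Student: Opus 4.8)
The plan is to reduce each notion of goodness to an explicit ``seed condition'' at every coordinate $a$, and then to observe that these seed conditions are identified by the very map $f \mapsto \text{rng}(f)$ that converts $\vec{F}$ into its Claverie-style representation $\vec{E}$. On the $O_a$-side I would use the routine reformulation recorded just after Definition \ref{def_GoodSelfGen}: $M$ is $\vec{F}$-good iff for every $a \in M \cap [\lambda]^{<\omega}$ and every $D \in F_a \cap M$ we have $\pi_M \restriction a \in D$. On the Claverie side I would unfold Definition \ref{def_GoodClaverie}: since $E_a$ is definable from $\vec{E}$ and hence $E_a \in M$, we have $\sigma_M^{-1}(E_a) = \pi_M(E_a) = \{\pi_M(E) : E \in E_a \cap M\}$, so that the required inclusion reduces to a membership statement asserting that the relevant seed lies in every generator $E \in E_a \cap M$. (Here I would first note that $M \prec (H_\theta,\in,\vec{F})$ iff $M \prec (H_\theta,\in,\vec{E})$, since $\vec{F}$ and $\vec{E}$ are interdefinable and a definable predicate does not change the class of elementary submodels.)

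The heart of the argument is the bijection underlying the definition of $\vec{E}$. I would verify that, by the choice of $\kappa_a$ together with normality, the filter $F_a$ concentrates on the order-preserving functions $f$ with $\text{dom}(f)=a$ and $\text{rng}(f) \in [\kappa_a]^{|a|}$, and that on this support the map $f \mapsto \text{rng}(f)$ is a bijection onto $[\kappa_a]^{|a|}$ whose inverse sends a finite set $s$ to the unique order-isomorphism of $a$ onto $s$. This is exactly the map passing from $F_a$ to $E_a$ via $D \mapsto E(D) := \{\text{rng}(f) : f \in D,\ \text{rng}(f)\in[\kappa_a]^{|a|}\}$, so $D \in F_a \cap M$ corresponds to $E(D) \in E_a \cap M$ and conversely. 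The key computation is that the seed transports correctly: since $\pi_M \restriction a$ is precisely the order-isomorphism of $a$ onto $\pi_M[a]$, we have $\text{rng}(\pi_M \restriction a) = \pi_M[a]$, which is the seed appearing in the derived classic extender of Definition \ref{def_GoodClaverie}. Consequently $\pi_M \restriction a \in D$ holds if and only if this seed belongs to $E(D)$, so the two families of seed conditions agree coordinate by coordinate.

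Chaining these equivalences gives the lemma: $M$ is $\vec{F}$-good iff for all $a$ and all $D \in F_a \cap M$ the seed $\pi_M \restriction a$ lies in $D$, iff for all $a$ and all $E \in E_a \cap M$ the corresponding seed lies in $E$, iff $M$ is $\vec{E}$-good. Both directions rely on bijectivity on the relevant support; for the direction from $\vec{E}$-goodness back to $\vec{F}$-goodness one uses that the seed's membership in $E(D)$ produces some $f \in D$ whose range equals the seed, and such an $f$ is forced to equal $\pi_M \restriction a$ by uniqueness of the order-isomorphism between finite sets of ordinals. I expect the main obstacle to be exactly this matching step: one must confirm that the $F_a$-measure is carried only by full-domain order-preserving functions (so that $f \mapsto \text{rng}(f)$ is injective where it matters), check the range-boundedness $\text{rng}(\pi_M \restriction a) \in [\kappa_a]^{|a|}$ (which follows from goodness applied to the canonical measure-one set), and keep the elementarity bookkeeping under $\sigma_M$ straight when moving generators between $F_a \cap M$ and $E_a \cap M$.
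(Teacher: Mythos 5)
The paper gives no proof of this lemma (it is explicitly left to the reader as routine), and your argument is correct and is exactly the expected one: reduce both goodness notions to seed conditions ($\pi_M\restriction a \in D$ for $D \in F_a\cap M$ versus $\pi_M[a]\in E$ for $E\in E_a\cap M$) and transport them along the bijection $f\mapsto \operatorname{rng}(f)$, using that $F_a$ concentrates on full-domain order-preserving maps with range in $[\kappa_a]^{|a|}$ (which is what makes the Claverie representation well defined) and that an order-isomorphism between two finite sets of ordinals is unique, which closes the backward direction. Your reading of Definition \ref{def_GoodClaverie} with seed $\pi_M[a]=\sigma_M^{-1}(a)$ rather than $a$ itself is the correct (and only coherent) one, and the preliminary observation that $\vec{F}$-elementarity and $\vec{E}$-elementarity of $M$ coincide is a worthwhile bookkeeping point.
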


\begin{lemma}\label{lem_Char_I_below}
Let $I$ be a normal ideal, $\vec{F}$ an ideal extender as in Definition \ref{def_AS_Various}, and  $\vec{E} = \langle E_a \ : \ a \in [\lambda]^{<\omega} \rangle$ the Claverie-style representation of $\vec{F}$.  The following are equivalent:
\begin{enumerate}
 \item\label{item_IbelowE} $I \ge \vec{F}$ (following Convention \ref{conv_IdealBelow});
 \item\label{item_CharProj} For all $a \in [\lambda]^{<\omega}$:
 \[
 E_a = \big\{ X \subseteq \kappa_a \ : \ \Vdash_{\mathbb{B}_I} \ \check{a} \in j_{\dot{G}}(\check{X})  \big\}
 \]
\end{enumerate}
\end{lemma}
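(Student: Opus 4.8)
The plan is to unwind both conditions into statements about the generic ultrapower of $I$, and then match them through the Claverie translation $f \mapsto \text{rng}(f)$. Let $\langle E^I_b \ : \ b \in \wp(Z) \rangle$ be the normal system of filters attached to $I$, where $Z$ is the transitive support of $I$ (so $\lambda \subseteq Z$ and $[\lambda]^{<\omega} \subseteq \wp(Z)$). By Convention \ref{conv_IdealBelow} and Definition \ref{def_PO}, condition \ref{item_IbelowE} says exactly that this system extends $\vec{F}$; since $\text{dom}(\vec{F}) = [\lambda]^{<\omega}$, this means $E^I_a = F_a$ for every $a \in [\lambda]^{<\omega}$. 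Because the system attached to a single normal ideal has a $\subseteq$-largest domain element, its associated poset is forcing equivalent to $\mathbb{B}_I$ and its generic ultrapower map is the usual generic ultrapower $j_{\dot{G}}$ of $I$. Applying clause \ref{item_DefOfFilter} of Definition \ref{def_NormalSystem} to $\langle E^I_b \rangle$ then yields, for each $a \in [\lambda]^{<\omega}$ and each $D \subseteq O_a$,
\[
D \in E^I_a \ \iff \ \Vdash_{\mathbb{B}_I} \big( j_{\dot{G}} \restriction \check{a} \big)^{-1} \in j_{\dot{G}}(\check{D}).
\]

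The heart of the argument is to show that the Claverie representation of $E^I_a$ coincides with the derived extender on the right-hand side of \ref{item_CharProj}. Write $j := j_{\dot{G}}$. The two facts driving the translation are that $a$ is a finite set of ordinals, so any $f \in O_a$ with domain $a$ is the strictly increasing enumeration completely determined by $\text{rng}(f)$, and that $\text{rng}\big( (j \restriction a)^{-1} \big) = a$. I would push the displayed condition forward under $f \mapsto \text{rng}(f)$: given $Y \subseteq [\kappa_a]^{|a|}$, set $D_Y := \{ f \in O_a \ : \ \text{rng}(f) \in Y \}$, so that in the generic ultrapower $j(D_Y) = \{ g \in O_{j(a)} \ : \ \text{rng}(g) \in j(Y) \}$ by elementarity. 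Since $(j \restriction a)^{-1}$ lies in $O_{j(a)}$ (guaranteed by clauses (2) and (3) of Definition \ref{def_NormalSystem} for $\langle E^I_b \rangle$) and has range $a$, we obtain $\Vdash (j \restriction a)^{-1} \in j(D_Y) \iff \Vdash \check{a} \in j(Y)$. Hence $D_Y \in E^I_a$ iff $Y$ belongs to the derived extender, and because $\text{rng}$ carries $D_Y$ onto $Y$ this shows that the Claverie representation of $E^I_a$ is exactly the $E_a$ of \ref{item_CharProj}.

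Finally I would record the remaining bookkeeping and conclude. The Claverie representation of $F_a$ is the given $E_a$ by the formula after Definition \ref{def_AS_Various}, and the map $F_a \mapsto \text{ClavRep}(F_a)$ is faithful on normal filters: for a set $a$ of ordinals a full-domain collapse map is recoverable from its range, so $\text{ClavRep}$ is a bijection on the nondegenerate maps, while by minimality of $\kappa_a$ together with normality the degenerate maps (those $f$ with $a \not\subseteq \text{dom}(f)$ or $\text{rng}(f) \not\subseteq \kappa_a$) form a null set; consequently $D \in F_a \iff \text{ClavRep}(D) \in \text{ClavRep}(F_a)$, and likewise for $E^I_a$, so each filter is determined by its Claverie representation. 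Combining the steps, $I \ge \vec{F}$ iff $E^I_a = F_a$ for all $a$, iff their Claverie representations agree for all $a$, iff $E_a$ equals the derived extender for all $a$, which is \ref{item_CharProj}. I expect the main obstacle to be the middle paragraph, namely verifying that $\text{rng}$ transports the abstract derived filter on $O_a$ precisely onto the derived extender on $[\kappa_a]^{|a|}$, and that measure-one sets concentrate on the nondegenerate collapse maps so that no information is lost under the translation.
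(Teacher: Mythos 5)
Your proposal is correct. Note that the paper gives no proof of this lemma at all (it is explicitly ``left to the interested reader''), and your argument is exactly the intended routine verification: unwind condition \ref{item_IbelowE} to $E^I_a = F_a$, use clause \ref{item_DefOfFilter} of Definition \ref{def_NormalSystem} for the system attached to $I$, and transport along $f \mapsto \operatorname{rng}(f)$ using that a full-domain collapse of a finite set of ordinals is the order isomorphism determined by its range and that $\operatorname{rng}\bigl((j\restriction a)^{-1}\bigr) = a$. You also correctly isolate the one point requiring care --- that the maps with $\operatorname{dom}(f) \ne a$ or $\operatorname{rng}(f) \not\subseteq \kappa_a$ are null, so that the Claverie translation loses no information --- and your justification of it is adequate.
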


\section{A question of Audrito-Steila}\label{sec_AudritoSteila}

Audrito-Steila~\cite{AudritoSteila} call attention to the following concepts (see Definition 3.2 of \cite{AudritoSteila}).  Given a first-order property $P(j)$ in the language $\{ \in, j \}$ (where $j$ is a predicate symbol), we say that \textbf{$\boldsymbol{\kappa}$ has generically property P} iff, in some generic extension $V[G]$ of $V$, there is a definable elementary embedding $j$ with domain $V$ and critical point $\kappa$ such that $(V[G], \in, j) \models P(j)$.  They say that \textbf{$\boldsymbol{\kappa}$ has ideally property P} iff there exists some normal system of filters $\vec{F}$ with critical point $\kappa$ such that
\[
\Vdash_{\mathbb{B}_{\vec{F}}} \ \text{``The generic ultrapower embedding has property P"}
\]
They asked:
\begin{question}[Question 4.4 of \cite{AudritoSteila}]\label{q_no}
Is having ideally property P equivalent to having generically property
P?
\end{question}
Question \ref{q_no} was apparently motivated by Claverie's~\cite{claverieThesis} distinction between ``ideally strong" and ``generically strong" cardinals.  We prove that the answer to Question \ref{q_no} is \emph{no}.

Given a (possibly external) elementary embedding $j: V \to N$, we say that \textbf{$\boldsymbol{j}$ is eventually continuous} iff there exists some $\theta$ such that for all $\alpha$ with $\text{cf}^V(\alpha) > \theta$, $j$ is continuous at $\alpha$.\footnote{i.e.\ $\text{sup}(j[\alpha]) = j(\alpha)$.}  

\begin{theorem}\label{thm_no}
Let $P(j)$ stand for:  ``$j$ is a nontrivial elementary embedding and is \textbf{not} eventually continuous."  It is consistent (relative to a proper class of measurable cardinals) that P generically holds, but does not ideally hold.
\end{theorem}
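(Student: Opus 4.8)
The plan is to split the proof into an easy ``ideal'' half and a harder ``generic'' half, and in fact to observe that the failure of ideal property P is a theorem of ZFC, so that all the genuine work is in arranging a model where P holds generically.

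First I would show that \emph{no} normal system of filters can witness ideal property P, because every generic ultrapower arising from a normal system of filters is eventually continuous. Fix $\vec{F} = \langle F_a \ : \ a \in \mathcal{C}\rangle$ and set $\theta := \sup_{a \in \mathcal{C}} |O_a|$, which is a genuine cardinal since $\mathcal{C}$ is a set. I claim $\Vdash_{\mathbb{B}_{\vec{F}}}$ ``$j_{\dot{G}}$ is continuous at every $\alpha$ with $\mathrm{cf}^V(\alpha) > \theta$''. Indeed, every element of $\mathrm{ult}(V,\dot{G})$ has the form $[a,f]$ with $a \in \mathcal{C}$ and $f \in V$, $f: O_a \to V$; if $[a,f] < j_{\dot{G}}(\alpha)$ then by {\L}o\'{s}'s theorem we may assume $f: O_a \to \alpha$, and since $|O_a| \le \theta < \mathrm{cf}^V(\alpha)$ we get $\beta := \sup \mathrm{rng}(f) < \alpha$, whence $[a,f] \le j_{\dot{G}}(\beta)$. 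Thus $\sup j_{\dot{G}}[\alpha] = j_{\dot{G}}(\alpha)$, so $j_{\dot{G}}$ is eventually continuous and no $\vec{F}$ can force $P(j_{\dot{G}})$. Hence ideal property P fails outright, and the entire content reduces to producing a model of ``P holds generically.''

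For the generic half, I would work relative to a proper class of measurables, reducing if necessary to a model with a \emph{definable} coherent sequence $\langle U_i \ : \ i \in \mathrm{Ord}\rangle$ of normal measures (the passage to such a model, e.g.\ a suitable inner extender model, is what accounts for the consistency-strength hypothesis). Here $U_i$ is a normal measure on $\kappa_i$, with $\langle \kappa_i \rangle$ increasing, cofinal in $\mathrm{Ord}$, and adequately spaced (each $\kappa_{i+1}$ inaccessible above $2^{\kappa_i}$). I would then define $j := j_{0,\infty}: V \to M_\infty$ as the length-$\mathrm{Ord}$ linear iteration of $V$ that applies at stage $i$ the ultrapower by the image of $U_i$. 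Because the critical points strictly increase and are spaced, for each fixed $\alpha$ the value $j_{0,\gamma}(\alpha)$ stabilizes at a bounded stage, so $M_\infty$ is well-founded and set-like and $j$ is a definable elementary embedding with critical point $\kappa_0$. At stage $i$ the ultrapower is discontinuous at $\kappa_i$ while all later stages fix $j(\kappa_i)$; consequently $\sup j[\kappa_i] = \kappa_i < j(\kappa_i)$, i.e.\ $j$ is discontinuous at $\kappa_i$, an ordinal of $V$-cofinality $\kappa_i$. Since the $\kappa_i$ are regular and cofinal in $\mathrm{Ord}$, $j$ is discontinuous at unboundedly large $V$-cofinalities, so $j$ is \emph{not} eventually continuous. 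Thus $\kappa_0$ has generically property P (witnessed already in $V$, hence a fortiori in a trivial set-generic extension), while by the first paragraph it does not have ideally property P.

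The hard part will be taming this class-length iteration inside the generic framework. Three points need care: (a) that the direct limit remains well-founded and set-like, which the stabilization from increasing critical points delivers but only after arranging the $\kappa_i$ to be sufficiently spaced and the sequence coherently chosen; (b) securing the definability of the defining sequence $\vec{U}$ in the relevant model, since a proper class of measurables does not by itself hand us a definable sequence, so this is precisely where the reduction to an inner model (or an Easton-style preparation) must be carried out and where the boundary between set- and class-generic extensions has to be respected; and (c) confirming that the discontinuity cofinalities are genuinely unbounded rather than being collapsed by the iteration. I expect (a) together with (b) to be the main obstacle.
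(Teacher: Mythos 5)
Your proposal is correct and follows essentially the same two-part strategy as the paper: the ideal half is the standard observation that a generic ultrapower by a set-sized normal system of filters is an extender-style embedding with all elements of the form $[a,f]$ for $f$ with small domain, hence continuous at all $\alpha$ of sufficiently large $V$-cofinality (the paper routes this through Theorem 2.37 of Audrito--Steila rather than the direct-limit representation, but it is the same argument), and the generic half is the same definable class-length linear iteration by a definable sequence of normal measures, discontinuous at each of the regular $\kappa_i$ cofinally in $\mathrm{Ord}$.
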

\begin{proof}
Let $V$ be a model of ZFC and $\langle U_i,\kappa_i \ : \ i \in \text{ORD} \rangle$ be a definable class such that $\langle \kappa_i \ : \ i \in \text{ORD} \rangle$ is an increasing sequence of measurable cardinals, and $U_i$ is a normal measure on $\kappa_i$ for each $i \in \text{ORD}$.  For example, this can be arranged if $V$ has a definable wellorder of the universe and has class many measurable cardinals (e.g.\ it holds for the core model below $0$-sword from \cite{ZemanBook}; in fact that model has exactly one normal measure on each measurable cardinal).  Then there is an elementary embedding $j: V \to N$ with $N$ wellfounded that is definable in $V$, and is not eventually continuous.  There are many ways to construct such a $j$, but for example, define an iteration $\langle N_i, \pi_{i,k} \ : \ i \le k \le  \text{ORD} \rangle$ as follows:
\begin{itemize}
 \item $N_0 := V$ and $\pi_{0,0} = \text{id}$
 \item If $N_i$ and $\pi_{k,i}$ are defined for all $k \le i$ then $N_{i+1}:= \text{ult}\big( N_i, \pi_{0,i}(U_i) \big)$
 \item If $i$ is a limit ordinal, or if $i = \text{ORD}$, then $N_i$ and $\pi_{k,i}$ (for $k < i$) are defined to be the direct limit.
\end{itemize}
Then $\pi_{0,\text{ORD}}: V \to N_{\text{ORD}}$ is definable in $V$, because  it is definable from $\vec{U}$ and we are assuming that $\vec{U}$ is definable in $V$.  For every $i \in \text{ORD}$, $N_i$  is wellfounded by Theorem 19.30 of \cite{MR1940513}.  It follows that the membership relation of $N_{\text{ORD}}$ is also wellfounded, and as usual we may without loss of generality assume that
\[
N_{\text{ORD}}^{\text{trans}}:=\big\{  y \in N_{\text{ORD}} \ : \ \text{The } \in_{N_{\text{ORD}}} \text{-transitive closure of } y \text{ is a set} \big\}
\]
is transitive.  The construction of the iteration ensures that
\begin{equation}
\forall i < j \text{ both in } \text{ORD,} \ \text{crit}\big( \pi_{i,j} \big) = \pi_{0,i}(\kappa_i)
\end{equation}
and it follows that
\begin{equation}\label{eq_CritPt}
\forall i \in \text{ORD} \ \text{crit}(\pi_{i,\text{ORD}})= \pi_{0,i}(\kappa_i) 
\end{equation}

To prove that $N_{\text{ORD}}^{\text{trans}}$ is in fact all of $N_{\text{ORD}}$, it suffices to check that an  arbitrary $N_{\text{ORD}}$-ordinal $y$ is an element of $\text{ORD}$.   Pick $i \in \text{ORD}$ such that $y \in \text{rng}(\pi_{i, \text{ORD}})$, and let $y_i:= \pi_{i,\text{ORD}}^{-1}(y)$.  Pick a limit $\ell \in \text{ORD}$ such that $\ell \ge i$ and $\pi_{0,i}(\kappa_\ell) > y_i$.  Then applying $\pi_{i,\ell}$ to both sides yields that $\pi_{0,\ell}(\kappa_\ell) > \pi_{i,\ell}(y_i) = \pi_{\ell, \text{ORD}}^{-1}(y)$.  Since the critical point of $\pi_{\ell,\text{ORD}}$ is $\pi_{0,\ell}(\kappa_\ell)$ by \eqref{eq_CritPt}, it follows that $y =  \pi_{\ell, \text{ORD}}^{-1}(y) \in \text{ORD}^{N_\ell}= \text{ORD}$.

 Finally note that for each $i \in \text{ORD}$, $\pi_{0,\text{ORD}}$ is discontinuous at  $\kappa_i$ because 
\[
\pi_{0,\text{ORD}} = \pi_{i,\text{ORD}} \circ \pi_{0,i}
\]
and $\text{crit}(\pi_{i,\text{ORD}})= \pi_{0,i}(\kappa_i)$ by \eqref{eq_CritPt}.  Since each $\kappa_i$ is measurable and hence regular, it follows that $\pi_{0,\text{ORD}}$ is not eventually continuous.

So $\pi_{0,\text{ORD}}$ and the trivial forcing witness that P generically holds.  On the other hand, suppose for a contradiction that P ideally holds.  Then there is some normal system of filters $\vec{F}$ and some generic $G$ for $\mathbb{B}_{\vec{F}}$ such that $V[G]$ believes that the generic ultrapower $j:V \to N$ has property P.  By Theorem 2.37 of Audrito-Steila~\cite{AudritoSteila}, the generic ultrapower $j: V \to N$ can be viewed as an extender embedding in $V[G]$.  That is, for a sufficiently large $\lambda$ we have 
\[
N = \{ j(f)(s) \ : \ s \in [\lambda]^{<\omega} \text{ and } f \in V \cap {}^{[\lambda]^{|s|}} V \}.
\]
Then the following standard argument shows that $j$ is continuous on $\text{cof}^V(> \lambda)$, contradicting that $P(j)$ holds.  Assume $\text{cf}^V(\theta) > \lambda$ and let $j(f)(s) < j(\theta)$, where $f \in V \cap {}^{[\lambda]^{|s|}} V$ and $s \in [\lambda]^{<\omega}$.  Without loss of generality we can assume that $\text{rng}(f) \subseteq \theta$.  Since $\text{cf}^V(\theta) > \lambda$ then $\text{sup}\big( \text{rng}(f)  \big) < \theta$ and hence $j\Big(\text{sup}\big( \text{rng}(f)  \big) \Big) < j(\theta)$.  Then
\[
j(f)(s) \le \text{sup}\big( \text{rng}\big( j(f) \big) \big) = j \big( \text{sup}(\text{rng}(f))  \big) <  j(\theta)
\]
So there is an element of $\text{range}(j)$ in the interval $[j(f)(s), j(\theta) \big)$, which completes the proof.
\end{proof}

\section{A characterization of supercompactness in terms of short extenders}\label{sec_CharSuper}

In this section we provide a characterization of supercompactness in terms of short extenders (Theorem \ref{thm_SC_Charact}).  This characterization allows us to show that NS is not necessarily universal for the class of all wellfounded extenders (Corollary \ref{cor_StrongNotSuper}).  One interesting feature of Theorem \ref{thm_SC_Charact} is that the characterization makes no reference to the strength of the extenders.\footnote{Recall the \textbf{strength} of an extender $\vec{F}$ is defined to be the largest ordinal $\eta$ such that $V_\eta \subset \text{ult}(V,\vec{F})$.  }

We will use the following characterization of supercompactness due to Magidor:
\begin{theorem}[Magidor~\cite{MR0327518}; see Viale~\cite{Viale_GuessingModel} for the formulation presented here]\label{thm_Magidor}
The following are equivalent for a regular uncountable cardinal $\kappa$:
\begin{enumerate}
 \item $\kappa$ is supercompact.
 \item For class-many $\lambda > \kappa$ there are stationarily many $M \in \wp_\kappa(H_\lambda)$ such that $M \cap \kappa \in \kappa$ and $M$ collapses to a hereditary initial segment of the universe.\footnote{i.e.\ $\text{otp}(M \cap \text{ORD})$ is a cardinal and the transitive collapse of $M$ is $H_{\text{otp}(M \cap \text{ORD})}$.}
\end{enumerate}
\end{theorem}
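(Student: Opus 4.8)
The plan is to establish the two implications separately, treating $(1)\Rightarrow(2)$ as routine and concentrating on $(2)\Rightarrow(1)$, which carries the real content.

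For $(1)\Rightarrow(2)$ I would fix a cardinal $\lambda>\kappa$ and an arbitrary $F:[H_\lambda]^{<\omega}\to H_\lambda$, and produce a single good $M\in\wp_\kappa(H_\lambda)$ closed under $F$; since $F$ is arbitrary this yields stationarity. Let $j:V\to N$ witness $\theta$-supercompactness of $\kappa$ for some $\theta\geq |H_\lambda|$, so that $\mathrm{crit}(j)=\kappa$, $j(\kappa)>\theta$, and ${}^\theta N\subseteq N$. The canonical candidate is $M_0:=j[H_\lambda]$. By elementarity $M_0\prec j(H_\lambda)=H^N_{j(\lambda)}$; by closure of $N$ we have $M_0\in N$, and $N$ computes its transitive collapse to be $H^V_\lambda=H^N_\lambda$ with $\mathrm{otp}(M_0\cap\mathrm{ORD})=\lambda$. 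Moreover $M_0\cap j(\kappa)=j[\kappa]=\kappa\in j(\kappa)$, and $M_0$ is closed under $j(F)$ because $j(F)(j(x_1),\dots,j(x_n))=j(F(x_1,\dots,x_n))\in M_0$. Thus $M_0$ witnesses in $N$ that there is a good $M\in\wp_{j(\kappa)}(H_{j(\lambda)})$ closed under $j(F)$; pulling this existential back through $j$ gives such an $M$ for $F$ in $V$.

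For $(2)\Rightarrow(1)$ I would show $\kappa$ is $\gamma$-supercompact for each $\gamma\geq\kappa$. Fix $\gamma$, choose a cardinal $\lambda$ in the relevant class with $\lambda>(2^\gamma)^+$ so that ``$x$ is $y$-supercompact'' is decided correctly by $H_\lambda$ for the parameters in play, and suppose toward a contradiction that $H_\lambda\models$ ``$\kappa$ is not $\gamma$-supercompact.'' Using $(2)$ and thinning with clubs, choose a good $M\prec H_\lambda$ with $\kappa,\gamma\in M$ and $M\cap\kappa\in\kappa$, collapsing to $H_\beta$ with $\beta=\mathrm{otp}(M\cap\mathrm{ORD})$ a cardinal. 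The inverse collapse $\sigma_M:H_\beta\to H_\lambda$ is then elementary with $\mathrm{crit}(\sigma_M)=\bar\kappa:=M\cap\kappa$, $\sigma_M(\bar\kappa)=\kappa$, and $\sigma_M(\bar\gamma)=\gamma$ where $\bar\gamma:=\mathrm{otp}(M\cap\gamma)<\kappa$; by elementarity $H_\beta\models$ ``$\bar\kappa$ is not $\bar\gamma$-supercompact.''

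The crux is to contradict this. The seed $s_M:=\sigma_M[\bar\gamma]=M\cap\gamma$ lies in $\wp_\kappa(\gamma)$ since $\bar\gamma<\kappa$, so the ultrafilter derived from $\sigma_M$ with seed $s_M$,
\[
W_M:=\{\bar A\in H_\beta : \bar A\subseteq\wp_{\bar\kappa}(\bar\gamma)\ \wedge\ s_M\in\sigma_M(\bar A)\},
\]
is a genuine normal fine ultrafilter on $(\wp_{\bar\kappa}(\bar\gamma))^{H_\beta}$ by the standard derived-measure computations (fineness and normality use $s_M=\sigma_M[\bar\gamma]$ exactly as in the proof that a supercompactness embedding yields its measure). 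Since $\lambda>(2^\gamma)^+$ forces $\beta>(2^{\bar\gamma})^+$, the transitive structure $H_\beta$ contains all of $\wp(\wp_{\bar\kappa}(\bar\gamma))$, so $W_M\in H_\beta$ and $H_\beta$ correctly recognizes it as a normal fine ultrafilter; hence $H_\beta\models$ ``$\bar\kappa$ is $\bar\gamma$-supercompact,'' a contradiction. I expect the main obstacle to be precisely this verification: that $W_M$ is a bona fide normal fine ultrafilter that $H_\beta$ can both see and correctly classify. This is where both hypotheses on $M$ are essential --- $M\cap\kappa\in\kappa$ makes $\mathrm{crit}(\sigma_M)=\bar\kappa$ with $\sigma_M(\bar\kappa)=\kappa$, while collapse to a genuine $H_\beta$ supplies transitivity and correctness of the target --- and calibrating $\lambda$ so that $H_\lambda$ and its collapses $H_\beta$ decide $\gamma$- and $\bar\gamma$-supercompactness correctly is the delicate bookkeeping.
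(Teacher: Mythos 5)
The paper does not actually prove Theorem \ref{thm_Magidor}; it is quoted from Magidor (in Viale's formulation) and used as a black box in Section \ref{sec_CharSuper}, so there is no internal proof to compare against. Your argument is the standard one for this characterization and is essentially correct: the $(1)\Rightarrow(2)$ direction via the seed $j[H_\lambda]$ and reflection through $j$ is fine, and the $(2)\Rightarrow(1)$ direction via the measure derived from the inverse collapse with seed $M\cap\gamma$ is the right mechanism, with the hypotheses $M\cap\kappa\in\kappa$ and ``collapses to a true $H_\beta$'' doing exactly the work you say they do.

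The one concrete repair needed is in your calibration of $\lambda$. Taking $\lambda>(2^\gamma)^+$ only yields (after reflection) $\beta>(2^{\bar\gamma})^+$, and that does not in general put $W_M$ inside $H_\beta$: as an ultrafilter on $\wp_{\bar\kappa}(\bar\gamma)$, $W_M$ has transitive closure of size $2^{(\bar\gamma^{<\bar\kappa})}$, which can exceed $(2^{\bar\gamma})^+$ when GCH fails (e.g.\ if $\bar\gamma^{<\bar\kappa}=\bar\gamma^+=2^{\bar\gamma}$ but $2^{\bar\gamma^+}$ is large). So $H_\beta$ need not contain the witness, and the contradiction does not yet follow. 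The fix is trivial because the class of admissible $\lambda$ in $(2)$ is a proper class: choose $\lambda>2^{2^\gamma}$ (or $\lambda\ge\bigl(2^{\gamma^{<\kappa}}\bigr)^+$); the same elementarity-plus-correctness computation then gives $2^{(\bar\gamma^{<\bar\kappa})}<\beta$, hence $\wp\bigl(\wp_{\bar\kappa}(\bar\gamma)\bigr)\subseteq H_\beta$ and $W_M\in H_\beta$, and $H_\beta$ (being a genuine hereditary initial segment whose cardinals are true cardinals) correctly certifies $W_M$ as a normal fine $\bar\kappa$-complete ultrafilter. With that adjustment the proof goes through.
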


\begin{theorem}\label{thm_SC_Charact}
The following are equivalent:
\begin{enumerate}
 \item\label{item_supercompact} $\kappa$ is supercompact;
 \item\label{item_ShortExtendersProjections} For class-many $\lambda$ there exists a short $(\kappa,\lambda)$-extender that is a canonical projection of NS.
\end{enumerate}
\end{theorem}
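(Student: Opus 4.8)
The plan is to obtain both implications from Magidor's characterization (Theorem \ref{thm_Magidor}), using the equivalence of ``canonical projection of NS'' with stationarity of good structures (Lemma \ref{lem_Equiv_ProjNS}) together with Fact \ref{fact_TransBelowKappa}. Since a short $(\kappa,\lambda)$-extender $\vec{F}$ is an ultrafilter system, $M$ is $\vec{F}$-good iff it is $\vec{F}$-self-generic (the observation preceding Observation \ref{obs_DerivedIsElement}), the ``moreover'' clause of Lemma \ref{lem_Equiv_ProjNS} applies, so $\vec{F}$ is a canonical projection of NS iff the set of $\vec{F}$-good $M \prec H_\theta$ is stationary, and every such $M$ satisfies $M \cap \kappa \in \kappa$. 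This lets me pass freely between extenders, their good structures, and the inverse-collapse embeddings $\sigma_M : H_M \to H_\theta$ with $\sigma_M(M \cap \kappa) = \kappa$ that they induce.

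For \ref{item_supercompact} $\Rightarrow$ \ref{item_ShortExtendersProjections}, fix an arbitrary $\lambda$ and $\theta \gg \lambda$. Using supercompactness, fix a normal fine measure $\mathcal{U}$ on $\wp_\kappa(H_\theta)$ whose embedding $j_{\mathcal{U}}$ has critical point $\kappa$ and $j_{\mathcal{U}}(\kappa) > \lambda$, and let $\vec{F}$ be the short $(\kappa,\lambda)$-extender derived from $j_{\mathcal{U}}$ on ordinal seeds, i.e.\ $D \in F_a \iff (j_{\mathcal{U}} \restriction a)^{-1} \in j_{\mathcal{U}}(D)$ for $a \in [\lambda]^{<\omega}$. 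A \L{}o\'s computation identifies each $F_a$ with a projection of $\mathcal{U}$: for $D \subseteq O_a$ one has $D \in F_a$ iff $\{M : \pi_M \restriction a \in D\} \in \mathcal{U}$, because the collapse of the identity point $j_{\mathcal{U}}[H_\theta]$ restricted to $a$ is exactly $(j_{\mathcal{U}} \restriction a)^{-1}$. Consequently the standard $\mathcal{U}$-good points---those lying in all of their own $\mathcal{U}$-measure-one sets---are $\vec{F}$-good, and by Magidor's theorem $\mathcal{U}$-measure-one many such $M$ moreover collapse to a hereditary initial segment with $M \cap \kappa \in \kappa$. As measure-one sets are stationary, the $\vec{F}$-good structures are stationary, and Lemma \ref{lem_Equiv_ProjNS} yields that $\vec{F}$ is a canonical projection of NS. Since $\lambda$ was arbitrary, this holds for class-many $\lambda$.

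For \ref{item_ShortExtendersProjections} $\Rightarrow$ \ref{item_supercompact}, fix a large $\lambda$ carrying a short $(\kappa,\lambda)$-extender $\vec{F}$ that projects to NS, so the $\vec{F}$-good $M \prec H_\theta$ are stationary with $\kappa_M := M \cap \kappa \in \kappa$. For such $M$, goodness makes the classic extender derived from $\sigma_M$ coincide with $\sigma_M^{-1}(\vec{F})$ (Definition \ref{def_GoodClaverie}), giving a factorization $\sigma_M = k_M \circ i_M$, where $i_M : H_M \to P_M$ is the associated short-extender ultrapower and $\text{crit}(k_M) \ge \lambda_M := \text{otp}(M \cap \lambda)$. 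The plan is to exploit this factorization---in particular that $k_M$ fixes $H_{\lambda_M}$ and that the short ultrapower $i_M$ preserves $\wp(\kappa_M)$---together with the self-genericity of the good $M$, to show that stationarily many good structures guess correctly up to $\lambda$, and thereby assemble a normal fine ultrafilter on $\wp_\kappa(\lambda)$ witnessing $\lambda$-supercompactness; letting $\lambda$ range over the given class then gives full supercompactness (equivalently, one verifies Magidor's condition directly for stationarily many good $M$).

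The main obstacle is precisely this last upgrade. The factorization immediately yields only that good $M$ are correct about $\wp(\kappa_M)$, via $\wp(\kappa_M)^{H_M} = \wp(\kappa_M)^{P_M} = \wp(\kappa_M)^{V}$ using $\text{crit}(k_M) \ge \lambda_M > \kappa_M$; but this is mere $\kappa$-level guessing and is strictly weaker than supercompactness, reflecting the fact that a short extender a priori controls only $\wp(\kappa)$. The real work is to leverage the \emph{full length} $\lambda$ of the extender---that $\text{crit}(k_M)$ reaches all the way to $\lambda_M$, and that the good structures are stationary, hence abundant and amenable to Fodor-style reflection---to push the correctness up to the level of $\lambda$, which is what Magidor's characterization consumes. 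I expect the delicate point to be controlling $\wp(\beta)^{H_M}$ for $\kappa_M < \beta < \lambda_M$, where $i_M$ genuinely adds subsets, so that the requisite correctness must be extracted from stationarity and self-genericity of the good models rather than from the extender ultrapower in isolation.
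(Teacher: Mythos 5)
Your first direction is essentially the paper's (the paper routes through Lemma \ref{lem_Char_I_below}, Burke's theorem, and transitivity of $\ge$ rather than checking goodness of $\mathcal{U}$-measure-one sets directly, but that is cosmetic). The second direction has a genuine gap, and you have misdiagnosed where the difficulty lies. You stop at the factorization $\sigma_M = k_M \circ i_M$ with $\mathrm{crit}(k_M) \ge \lambda_M$, conclude that this ``immediately yields only that good $M$ are correct about $\wp(\kappa_M)$,'' and propose to recover $\lambda$-level correctness from stationarity, self-genericity, and Fodor-style reflection. No such reflection is needed, and the worry that $i_M$ ``genuinely adds subsets'' between $\kappa_M$ and $\lambda_M$ is beside the point: since $\vec{F}^M \in H_M$ (Observation \ref{obs_DerivedIsElement}), the ultrapower $H'_M = \mathrm{ult}(H_M,\vec{F}^M)$ is computed internally to $H_M$, so $H'_M \subseteq H_M$ and $i_M$ cannot add anything to $H_M$; the only question is whether $H'_M$ (hence $H_M$) already contains \emph{all} $V$-subsets of the relevant ordinal.

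The missing idea is the order of quantifiers and the choice of parameters. Fix the target $\theta$ for Magidor's criterion first, then use hypothesis \ref{item_ShortExtendersProjections} to pick $\lambda > 2^\theta$ carrying a short $(\kappa,\lambda)$-extender projecting to NS, and work with good $M \prec (H_{\lambda^+},\in,\mathfrak{A},\vec{F})$. By elementarity $(2^{\theta_M})^{H_M} < \lambda_M$, and since $H'_M \subseteq H_M$ also $(2^{\theta_M})^{H'_M} < \lambda_M \le \mathrm{crit}(k_M)$. An elementary $k_M : H'_M \to H_{\lambda^+}$ whose critical point exceeds $(2^{\theta_M})^{H'_M}$ forces $\wp^{H'_M}(\theta_M) = \wp^{H_{\lambda^+}}(\theta_M) = \wp^V(\theta_M)$, and combining with $H'_M \subseteq H_M$ gives $\wp^{H_M}(\theta_M) = \wp^V(\theta_M)$. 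That single equality already shows that $M \cap H_\theta$ collapses to a true hereditary initial segment $H_{\theta_M}$ of $V$; together with $M \cap \kappa \in \kappa$ (Fact \ref{fact_TransBelowKappa}) and the stationarity of the good structures, Magidor's Theorem \ref{thm_Magidor} applies. So the correctness you were trying to push ``up to $\lambda$'' is never needed; only correctness up to $\theta$ is, and that is bought simply by taking $\lambda$ above $2^\theta$, not by any reflection argument over the stationary set of good models.
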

\begin{proof}
Assume first that $\kappa$ is supercompact.  Given $\lambda > \kappa$, let $U$ be a normal fine measure on $\wp_\kappa(\lambda)$, and let $j: V \to_U N$ be the ultrapower embedding.  Let $\vec{F}$ be the $(\kappa,\lambda)$-extender derived from $j$; i.e.\ for each $a \in [\lambda]^{<\omega}$, 
\[
X \in F_a \iff a \in j(X)
\]
Then $U \ge \vec{F}$ by Lemma \ref{lem_Char_I_below}, so by   \ref{lem_Equiv_ProjNS} $\vec{F}$ is a canonical projection of $U$.  Since $j(\kappa) > \lambda$ then $\vec{F}$ is a short extender, i.e.\ each $F_a$ concentrates on $[\kappa]^{|a|}$.  By Burke's Theorem \ref{thm_Burke}, $U$ is a canonical projection of $\text{NS} \restriction S_{U \text{-good}}$.  Clearly the relation ``X is the canonical projection of Y" (as in Definition \ref{def_PO}) is transitive, so $\vec{F}$ is the canonical projection of $\text{NS} \restriction S_{U \text{-good}}$.

Now assume \ref{item_ShortExtendersProjections}.  By Magidor's Theorem \ref{thm_Magidor}, it suffices to prove that for all regular $\theta > \kappa$, there are stationarily many $M \in \wp_\kappa(H_\theta)$ such that $M \cap \kappa \in \kappa$, $\text{otp}(M \cap \text{ORD})$ is a cardinal, and $M$ collapses to a hereditary initial segment of the universe.  Fix a regular $\theta > \kappa$ and an algebra $\mathfrak{A}$ on $H_\theta$.  By \ref{item_ShortExtendersProjections} there is a $\lambda > 2^\theta$ and a short $(\kappa,\lambda)$-extender $\vec{F}$ that is a canonical projection of NS.  By Lemma \ref{lem_Equiv_ProjNS}, the set $S_{\vec{F}\text{-good}}$ of $\vec{F}$-good structures is stationary in $H_{\lambda^+}$.  Fix any $\vec{F}$-good $M$ such that 
\[
M \prec (H_{\lambda^+}, \in, \mathfrak{A}, \vec{F})
\]

Let $\sigma_M: H_M \to M \prec H_{\lambda^+}$ be the inverse of the Mostowski collapse of $M$ and $\theta_M:= \sigma_M^{-1}(\theta)$.  Note that $M_\theta:= M \cap H_\theta \prec \mathfrak{A}$ and that $\sigma_M^{-1}(H_\theta) = \sigma_M^{-1}[M_\theta] = \big( H_{\theta_M} \big)^{H_M} $ is the transitive collapse of $M_\theta$.  Now by Fact \ref{fact_TransBelowKappa}, $M \cap \kappa \in \kappa$.  
We will prove that
\begin{equation}\label{eq_ToProveContained}
\wp^{H_M}(\theta_M) = \wp^V(\theta_M)
\end{equation}
which will imply that $\text{otp}(M_\theta \cap \text{ORD}) = \text{ht}\big( H_{M_\theta} \big)$ is a cardinal in $V$ and that $(H_{\theta_M})^{H_M} = (H_{\theta_M})^V$.  Since $M$ is $\vec{F}$-good then by Observation \ref{obs_DerivedIsElement} the $(\sigma_M^{-1}(\kappa), \sigma_M^{-1}(\lambda))$-extender $\vec{F}^M$ derived from $\sigma_M$ is an element of $H_M$.  Let $H'_M:= \text{ult}(H_M, \vec{F}^M)$ and $k_M: H'_M \to H_{\lambda^+}$ the canonical factor map, so that
\[
\sigma_M = k_M \circ j_{\vec{F}^M}
\]
Note that since $\vec{F}^M \in H_M$ then the ultrapower map $j_{\vec{F}^M}$ is internal to $H_M$, so in particular
\begin{equation}\label{eq_UltIsInternal}
H_M \supset H'_M
\end{equation}

Since $\vec{F}^M$ has length $\lambda_M:= \sigma_M^{-1}(\lambda)$ then $\text{crit}(k_M) \ge \lambda_M$.  Also, because $\lambda > 2^\theta$ then by elementarity of $\sigma_M$, $(2^{\theta_M})^{H_M} < \lambda_M$, and by \eqref{eq_UltIsInternal} it follows that $(2^{\theta_M})^{H'_M} < \lambda_M$.  Putting all this together, we have
\[
(2^{\theta_M})^{H'_M} < \text{crit}(k_M)
\]
which implies that
\begin{equation}\label{eq_k_preservesPowerSet}
\wp^{H'_M}(\theta_M) = \wp^{H_{\lambda^+}}(\theta_M) = \wp^V(\theta_M)
\end{equation}
Putting \eqref{eq_k_preservesPowerSet} and \eqref{eq_UltIsInternal} together yields \eqref{eq_ToProveContained} and completes the proof.
\end{proof}

\begin{corollary}\label{cor_StrongNotSuper}
Suppose $\kappa$ is strong but not supercompact.  Then for class-many $\lambda$ there is a short extender with critical point $\kappa$ and strength $\lambda$ that is not a canonical projection of NS.
\end{corollary}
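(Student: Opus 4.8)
The plan is to argue by contraposition through Theorem~\ref{thm_SC_Charact}. Since $\kappa$ is not supercompact, clause~\ref{item_ShortExtendersProjections} of that theorem must fail, so the class of ordinals $\mu$ for which some short $(\kappa,\mu)$-extender is a canonical projection of NS is \emph{not} a proper class. Being a class of ordinals that is not proper, it is a set, hence bounded; fix an ordinal $\lambda_0$ bounding it. Thus for every $\mu\ge\lambda_0$, \emph{no} short $(\kappa,\mu)$-extender is a canonical projection of NS. This is the only place the failure of supercompactness is used.

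Next I would use strongness to exhibit, for a proper class of $\lambda$, a single short extender with critical point $\kappa$ whose length \emph{and} strength both equal $\lambda$. Because $\kappa$ is strong, the ordinals arising as the exact strength $\mathrm{str}(j)$ of an embedding $j\colon V\to M$ with critical point $\kappa$ are unbounded in the ordinals (for any $\beta$ there is $j$ with $V_\beta\subseteq M$, and then $\mathrm{str}(j)\ge\beta$). Fix such a $\lambda\ge\lambda_0$ together with a witnessing $j$, arranged---by first passing to a slightly stronger embedding if necessary---so that $j(\kappa)>\lambda$. Let $E$ be the $(\kappa,\lambda)$-extender derived from $j$, i.e.\ $X\in F_a\iff a\in j(X)$ for $a\in[\lambda]^{<\omega}$. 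Then $E$ has critical point $\kappa$ and is short, since its length $\lambda$ lies below $j_E(\kappa)$.

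The key computation is that $E$ has strength exactly $\lambda$. Let $k\colon \mathrm{ult}(V,E)\to M$ be the factor map with $j=k\circ j_E$; as $E$ has length $\lambda$, we have $k\restriction\lambda=\mathrm{id}$, so $\mathrm{crit}(k)\ge\lambda$. Since $k$ fixes $V_{\mathrm{crit}(k)}$ pointwise and $V_\lambda\subseteq M$ (because $\mathrm{str}(j)=\lambda$), it follows that $V_\lambda\subseteq\mathrm{ult}(V,E)$, so $\mathrm{str}(E)\ge\lambda$. For the upper bound, if $V_{\lambda+1}\subseteq\mathrm{ult}(V,E)$ then, comparing $\mathrm{ult}(V,E)$ and $M$ through $k$ (every subset of $V_\lambda$ in $\mathrm{ult}(V,E)$ is of the form $k(X)\cap V_\lambda\in M$), one gets $V_{\lambda+1}\subseteq M$, contradicting $\mathrm{str}(j)=\lambda$; hence $\mathrm{str}(E)=\lambda$ and $\mathrm{length}(E)=\lambda\ge\lambda_0$. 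By the first paragraph $E$ is not a canonical projection of NS, and as the admissible $\lambda$ form a proper class this proves the corollary.

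The hard part is not the contraposition but the bookkeeping in the middle: the corollary is phrased in terms of \emph{strength}, whereas Theorem~\ref{thm_SC_Charact} delivers non-projectibility from \emph{length} being at least $\lambda_0$. For short extenders these two invariants genuinely diverge---a short extender can have strength far exceeding its length, since a long factor map can preserve $V_\eta$ well past the extender's length---so one cannot simply produce a high-strength short extender and expect it to have large length. The remedy above is to pin the two together by deriving $E$ from an embedding whose strength is exactly $\lambda$, which makes length and strength coincide; verifying this coincidence, together with the routine check that $E$ is genuinely short (i.e.\ $j_E(\kappa)>\lambda$), is the main technical point.
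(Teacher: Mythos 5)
Your proof is correct and takes essentially the same route as the paper: both arguments run Theorem~\ref{thm_SC_Charact} in the contrapositive and use the strongness of $\kappa$ to manufacture the offending extenders. The execution differs in two small ways. First, you negate clause~\ref{item_ShortExtendersProjections} wholesale to obtain a single bound $\lambda_0$ above which no short $(\kappa,\mu)$-extender can be a canonical projection of NS; the paper instead fixes the specific regular $\theta$ from Magidor's Theorem~\ref{thm_Magidor} at which the relevant set $A_{\kappa,\theta}$ is nonstationary, and re-runs the \ref{item_ShortExtendersProjections}~$\implies$~\ref{item_supercompact} argument to show that any short extender of length $>2^\theta$ that were a canonical projection of NS would make $A_{\kappa,\theta}$ stationary. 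Your black-box use is cleaner; the paper's version yields an explicit threshold. Second, on the length-versus-strength bookkeeping: the paper simply takes a short extender of strength $\lambda$ and notes one may assume its length is at least $\lambda$ (which is all the argument needs, since only the length figures in the non-projectibility), whereas you pin length and strength to the same $\lambda$. That extra care is legitimate, but the one step you wave at---``passing to a slightly stronger embedding if necessary so that $j(\kappa)>\lambda$'' while insisting $\lambda$ remain the \emph{exact} strength of $j$---is not automatic, since strengthening the embedding moves its strength; under the intended reading of the corollary (arbitrarily strong short extenders failing to project, i.e.\ strength at least $\lambda$), this wrinkle evaporates and your argument coincides with the paper's.
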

\begin{proof}
Since $\kappa$ is not supercompact, then by Magidor's Theorem \ref{thm_Magidor} there is some regular $\theta > \kappa$ such that there are only nonstationarily many $M \in \wp_\kappa(V_\theta)$ such that $M \cap \kappa \in \kappa$ and $M$ collapses to a hereditary initial segment of the universe; let $A_{\kappa,\theta}$ denote this nonstationary set.  Let $\lambda > 2^\theta$.  Since $\kappa$ is $\lambda$-strong, there exists a short extender $\vec{F}$ with critical point $\kappa$ and strength $\lambda$; and without loss of generality the length of $\vec{F}$ is at least $\lambda$.  By the proof of the \ref{item_ShortExtendersProjections} $\implies$ \ref{item_supercompact} direction of Theorem \ref{thm_SC_Charact}, if $\vec{F}$ were a canonical projection of NS then $A_{\kappa,\theta}$ would be stationary, which is a contradiction.   
\end{proof}

\section{I3 towers and `` \texorpdfstring{$\aleph_\omega$}{alephOmega} is Jonsson"}\label{sec_OmegaCofTowers}

If $\lambda$ is a singular cardinal of cofinality $\omega$, $\vec{I} = \langle I_x \ : \ x \in V_\lambda \rangle$ is a tower of ideals with domain $V_\lambda$, and $\langle \kappa_n \ : \ n \in \omega \rangle$ is increasing and cofinal in $\lambda$, then $\vec{I}$ is generated by
\[
\langle I_{V_{\kappa_n}} \ : \ n \in \omega \rangle
\]
because every $x \in V_\lambda$ is a subset (in fact member) of some $V_{\kappa_{n(x)}}$, and $I_x$ is the canonical projection of $I_{V_{\kappa_{n(x)}}}$ to an ideal on $x$.\footnote{Moreover if $\lambda$ is strong limit then one could just use $I_{\kappa_n}$ instead of $I_{V_{\kappa_n}}$; i.e.\ in that scenario $\langle I_x \ : \ x \in V_\lambda \rangle$ can be recovered from just $\langle I_{\kappa_n} \ : \ n \in \omega \rangle$.}  For this reason, to simplify notation we will often say things like ``suppose $\langle I_n \ : \ n \in \omega \rangle$ is a tower of height $\lambda$", where it is understood that $I_n= I_{V_{\kappa_n}}$ for some $\vec{\kappa} = \langle \kappa_n \ : \ n \in \omega \rangle$ that is cofinal in $\lambda$; this does not depend on the choice of the cofinal sequence (since the sequences indexed by any two cofinal sequences in $\lambda$ generate the same tower).

\begin{lemma}\label{lem_TreeGoodStruc}
Suppose $\langle \kappa_n \ : \ n \in \omega \rangle$ is a strictly increasing sequence of cardinals with supremum $\lambda$, and $\vec{I}=\langle I_n \ : \ n \in \omega \rangle$ is a tower of normal ideals of height $\lambda$ (where $I_n$ is an ideal on $V_{\kappa_n}$).  Suppose $W$ is a set such that $V_\lambda \subset W$ and $|W|=|V_\lambda|$, $F: [W]^{<\omega} \to W$ a function, $m \in \omega$, and $T \in I_m^+$.  Fix any surjection $\phi: V_\lambda \to W$ with the property that
\[
\forall n \in \omega \ \ V_{\kappa_n} \subseteq \phi[V_{\kappa_n}]
\]
Then there is a tree
\[
\mathcal{S}^{\vec{I},F,m,T,\phi}
\]  
of height $\le \omega$ such that the following are equivalent:
\begin{enumerate}
 \item There exists some $M \subseteq W$ such that $M$ is $\vec{I}$-good, $M$ is closed under $F$, and $M \cap V_{\kappa_m} \in T$.
 \item There is a cofinal branch through $\mathcal{S}^{\vec{I},F,m,T,\phi}$. 
\end{enumerate}
Moreover this equivalence is absolute to any outer model.
\end{lemma}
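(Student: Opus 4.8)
The plan is to build $\mathcal{S}^{\vec I, F, m, T, \phi}$ as the tree of finite approximations to the $\omega$-sequence $\langle M \cap V_{\kappa_n} \ : \ n \in \omega\rangle$, coded through $\phi$. Since $V_\lambda \subseteq W$ and $|W| = |V_\lambda|$, I would represent any candidate $M \subseteq W$ by its full $\phi$-preimage $\bar M := \phi^{-1}[M] \subseteq V_\lambda$, and then by the increasing, coherent sequence $\bar M_n := \bar M \cap V_{\kappa_n}$. The hypothesis $V_{\kappa_n} \subseteq \phi[V_{\kappa_n}]$, together with requiring each approximation to be $\phi$-saturated (to contain every $V_{\kappa_n}$-preimage of each of its $\phi$-values), guarantees that $M \cap V_{\kappa_n} = \phi[\bar M_n] \cap V_{\kappa_n}$; that is, the slice $M \cap V_{\kappa_n}$, and hence the restricted collapse $\pi_M \restriction V_{\kappa_n}$, is already determined by the level-$n$ approximation. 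A node at level $n$ is then a coherent, $\phi$-saturated sequence $\langle \bar M_0, \dots, \bar M_{n-1}\rangle$ (so $\bar M_i \subseteq V_{\kappa_i}$ and $\bar M_i = \bar M_{i+1} \cap V_{\kappa_i}$), ordered by end-extension. This is a tree of height $\le \omega$ living on a set, namely a subset of the finite sequences of subsets of $V_\lambda$, which is all that the absoluteness argument will require.

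Onto each node I impose exactly the local, already-decidable instances of the three requirements, so that a length-$\omega$ branch assembles to a witness $M := \phi[\bar M]$ with $\bar M := \bigcup_n \bar M_n$: first, \emph{positivity}, i.e.\ the level-$m$ slice satisfies $M \cap V_{\kappa_m} \in T$, imposed at level $m$; second, \emph{goodness}, i.e.\ for every $l \le n-1$ and every $D$ lying in $\phi[\bar M_{n-1}]$ that witnesses a level-$l$ instance of $\vec I$-goodness, the collapse $\pi_M \restriction V_{\kappa_l}$ (computed from $\bar M_l$) satisfies the requirement of Definition \ref{def_GoodSelfGen}; and third, \emph{closure}, i.e.\ whenever $F$ applied to a tuple from $\phi[\bar M_{n-1}]$ produces a value with a $\phi$-preimage in $V_{\kappa_{n-1}}$, that preimage already lies in $\bar M_{n-1}$. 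Each of these is a condition on the finite data of the node and on the fixed parameters $\vec I, F, T, \phi$, so the tree is well defined.

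I then verify the two directions of the stated equivalence. For the forward direction, a $\vec I$-good $M \subseteq W$ that is closed under $F$ with $M \cap V_{\kappa_m} \in T$ has the property that \emph{all} of its finite approximations satisfy the three conditions, simply because $M$ satisfies goodness, closure, and positivity globally; hence these approximations form a length-$\omega$ branch. For the converse, given such a branch I set $M := \phi[\bigcup_n \bar M_n]$ and read off the three properties: $M \cap V_{\kappa_m} \in T$ from positivity; closure under $F$ because every finite tuple and every relevant value is processed at some finite level (closure is an existential, ``include a preimage later'' requirement that an $\omega$-branch has time to meet); and goodness because any witnessing $D \in M$ of a level-$l$ instance lies in $V_\lambda$, hence enters $\phi[\bar M_{n-1}]$ at some finite level, at which point the goodness condition forces the requirement for the already-fixed slice $\pi_M \restriction V_{\kappa_l}$. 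The main obstacle, and the reason the ordering and the saturation condition are set up as above, is precisely this goodness requirement: it couples distinct levels, since a $D$ relevant at level $l$ may only be revealed as an element of $M$ at some level $k > l$, after the slice $M \cap V_{\kappa_l}$ has been frozen. The tree handles this automatically — if the frozen slice fails the requirement for such a $D$, then no node at level $k$ is admissible and that branch is cut off, so the surviving length-$\omega$ branches are exactly those whose assembled $M$ meets every goodness instance.

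Finally, the ``moreover'' clause is immediate from the shape of $\mathcal{S}^{\vec I, F, m, T, \phi}$: it is a fixed set of height $\le \omega$, and a tree of height $\le \omega$ has a length-$\omega$ branch if and only if it fails to be well-founded under reverse extension. Since well-foundedness of a set-sized relation is absolute between transitive models (Mostowski absoluteness), and the tree itself is the same set in any outer model, the existence of a cofinal branch is absolute to any outer model, which completes the plan.
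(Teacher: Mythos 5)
Your proposal is correct and follows essentially the same route as the paper: a tree of finite coherent approximations to the trace of $M$ along the filtration $W_n = \phi[V_{\kappa_n}]$, with node conditions enforcing positivity at level $m$, $F$-closure, and the goodness instances whose witnesses have appeared by the current level (so that a witness $D$ revealed at level $k>l$ cuts off any branch whose frozen level-$l$ slice violates it), followed by absoluteness of well-foundedness for set-sized trees of height $\omega$. The only difference is cosmetic: you code nodes as $\phi$-saturated preimages inside $V_{\kappa_n}$, whereas the paper works directly with subsets $X_i\subseteq W_i$ subject to $X_i = X_j\cap W_i$ and $X_i\cap V_{\kappa_i}=X_j\cap V_{\kappa_i}$.
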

\begin{proof}
For each $n \in \omega$ let $W_n:= \phi[V_{\kappa_n}]$.  

Let $\mathcal{S}^{\vec{I},F,m,T,\phi}$ be the collection of all finite sequences
\[
X_0, X_1, \dots, X_k
\]
such that:
\begin{enumerate}
 \item $X_i \subset W_i$ for all $i \le k$;
 \item If $m \le k$ then $X_m \cap V_{\kappa_m} \in T$;
 \item For all $i \le j \le k$:     \begin{enumerate}
  \item $X_i = X_j \cap W_i$;
  \item $X_i \cap V_{\kappa_i} = X_j \cap V_{\kappa_i}$;
 \item For all $z \in [X_i]^{<\omega}$: if $F(z) \in W_j$ then $F(z) \in X_j$;
 \item For all $C \in W_j \cap \text{Dual}(I_i)$:  $X_i \cap V_{\kappa_i} \in C$; 
\end{enumerate}
\end{enumerate}
Clearly $\mathcal{S}^{\vec{I},F,m,T,\phi}$ is closed under initial segments, and is hence a tree.  If $\langle X_n \ : \ n \in \omega \rangle$ is a cofinal branch, then $X_\omega:= \bigcup_{n < \omega} X_n$ is closed under $F$, the intersection of $X_\omega$ with $V_{\kappa_m}$ is in $T$, and $X_\omega$ is $\vec{I}$-good.

Conversely, if $Y \subset W$ and $Y \cap V_{\kappa_m} \in T$, $Y$ is closed under $F$, and $Y$ is $\vec{I}$-good, then 
\[
\langle  Y \cap W_n \ : \ n \in \omega \rangle
\]
is a cofinal branch through the tree.  

The ``moreover" part follows from absoluteness of wellfoundedness between transitive models of set theory, together with the fact that: for a given $M$ the statement ``$M$ is $\vec{I}$-good, closed under $F$, and $M \cap V_{\kappa_m}=\text{sprt}(I_m) \in T$" is a $\Sigma_0$ statement in the parameters $M$, $\vec{I}$, $F$, $V_{\kappa_m}$, and $T$.   
\end{proof}

\begin{theorem}\label{thm_Equiv_WF_project}
Let $\vec{I}=\langle I_n \ : \ n \in \omega \rangle$ be an $\omega$-cofinal tower of normal ideals.  
\begin{enumerate}
 \item\label{item_PrecipImpliesProjection} If $\vec{I}$ is precipitous, then it is a canonical projection of NS. 
 \item\label{item_WFultraEquiv} If the dual of each $I_n$ is an ultrafilter, then ``$\vec{I}$ has wellfounded ultrapower" is equivalent to ``$\vec{I}$ is a canonical projection of NS".
\end{enumerate}
\end{theorem}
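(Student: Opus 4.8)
The plan is to reduce both parts to the characterization of ``canonical projection of NS'' in Lemma~\ref{lem_Equiv_ProjNS}, using the tree of Lemma~\ref{lem_TreeGoodStruc} and the absoluteness of its ill-foundedness as the transfer mechanism. For part~\ref{item_PrecipImpliesProjection}, by Lemma~\ref{lem_Equiv_ProjNS} it is enough to show, for all large $\theta$, that the $\vec I$-good substructures of $H_\theta$ are $\vec I$-projective stationary; by coherence of the tower it suffices to treat a generating level, so I fix $m\in\omega$, $T\in I_m^+$, and (absorbing the Skolem functions of $H_\theta$) a single algebra $\hat F\colon [H_\theta]^{<\omega}\to H_\theta$, and seek a $\vec I$-good $M\prec H_\theta$ that is closed under $\hat F$ and has $M\cap V_{\kappa_m}\in T$. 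Closing $V_\lambda$ under $\hat F$ gives a set $W$ with $V_\lambda\subseteq W$, $|W|=|V_\lambda|$, on which $\hat F$ restricts to a function into $W$; so Lemma~\ref{lem_TreeGoodStruc} (with a suitable surjection $\phi$) says such an $M$ exists in $V$ iff the tree $\mathcal S:=\mathcal S^{\vec I,\hat F\restriction W,m,T,\phi}$ has a cofinal branch, and that this is absolute to outer models.

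I would produce the branch in a generic extension. Forcing with $\mathbb B_{\vec I}$ below the condition $(V_{\kappa_m},T)$ and using precipitousness, the generic ultrapower $j\colon V\to N$ has $N$ wellfounded, hence transitive, and $T\in G_m$ forces $j[V_{\kappa_m}]\in j(T)$. The key computation is that $\bar M:=j[W]$, which lies in $V[G]$, is $j(\vec I)$-good, is closed under $j(\hat F)$ (because $j(\hat F)(j[z])=j(\hat F(z))$ for finite $z\subseteq W$), and satisfies $\bar M\cap V_{j(\kappa_m)}=j[V_{\kappa_m}]\in j(T)$; goodness holds since any measure-one set of $j(\vec I)$ lying in $\text{range}(j)$ has the form $j(C)$ with $C\in\text{Dual}(I_n)\subseteq G_n$, whence $j[V_{\kappa_n}]\in j(C)$. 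By the forward direction of Lemma~\ref{lem_TreeGoodStruc}, applied in $V[G]$ to the image data, $\langle\,\bar M\cap j(W_n)\ :\ n\in\omega\,\rangle$ (where $W_n=\phi[V_{\kappa_n}]$) is then a cofinal branch through $j(\mathcal S)=\mathcal S^{j(\vec I),j(\hat F),m,j(T),j(\phi)}$. Since $j(\mathcal S)$ has height $\le\omega$, the existence of such a branch is absolute between the transitive models $N\subseteq V[G]$ (this absoluteness of wellfoundedness is exactly what underlies the ``moreover'' clause of Lemma~\ref{lem_TreeGoodStruc}); hence $j(\mathcal S)$ has a cofinal branch in $N$, so by elementarity of $j$ the tree $\mathcal S$ has one in $V$, and Lemma~\ref{lem_TreeGoodStruc} delivers the required $M$.

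For part~\ref{item_WFultraEquiv}, the forward implication is immediate: when each $I_n$ is dual to an ultrafilter, $\mathbb B_{\vec I}$ is trivial and its generic ultrapower is the genuine ultrapower of $V$, so ``$\vec I$ has wellfounded ultrapower'' coincides with ``$\vec I$ is precipitous'', and part~\ref{item_PrecipImpliesProjection} applies. For the converse, suppose $\vec I$ is a canonical projection of NS; by the ``moreover'' clause of Lemma~\ref{lem_Equiv_ProjNS} the $\vec I$-good structures are then \emph{stationary}. If $\text{ult}(V,\vec I)$ were ill-founded, fix functions $\langle f_n\ :\ n\in\omega\rangle$ with $f_n\colon\wp(V_{\kappa_n})\to\text{ORD}$ witnessing an infinite $\in$-descent, so that $A_n:=\{\,Y\subseteq V_{\kappa_{n+1}}: f_{n+1}(Y)<f_n(Y\cap V_{\kappa_n})\,\}\in\text{Dual}(I_{n+1})$ for every $n$. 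Choosing a $\vec I$-good $M\prec(H_\theta,\in,\vec I,\langle f_n\rangle)$, each $A_n$ lies in $M\cap\text{Dual}(I_{n+1})$, so goodness gives $M\cap V_{\kappa_{n+1}}\in A_n$, i.e.\ $f_{n+1}(M\cap V_{\kappa_{n+1}})<f_n(M\cap V_{\kappa_n})$. Then $\alpha_n:=f_n(M\cap V_{\kappa_n})$ is a strictly decreasing $\omega$-sequence of ordinals in $V$, which is absurd; hence the ultrapower is wellfounded.

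The main obstacle is the three-model bookkeeping in part~\ref{item_PrecipImpliesProjection}: one must verify that $j[W]$ is genuinely $j(\vec I)$-good and $j(\hat F)$-closed, and then chain the elementarity of $j\colon V\to N$ together with the absoluteness of ill-foundedness of the height-$\le\omega$ tree between $N$ and $V[G]$ so as to carry the branch all the way back to $V$. Once this transfer is set up, part~\ref{item_WFultraEquiv} is comparatively routine, its forward direction being a special case of part~\ref{item_PrecipImpliesProjection} and its converse a short descent argument using stationarity of the good structures.
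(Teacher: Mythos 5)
Your argument for part \ref{item_PrecipImpliesProjection} is essentially the paper's: reduce via Lemma \ref{lem_Equiv_ProjNS} to projective stationarity of the good structures, encode the search for a good, $F$-closed structure meeting $T$ as a cofinal branch through the tree of Lemma \ref{lem_TreeGoodStruc}, produce the branch $\langle j[W_n] : n\in\omega\rangle$ through $j(\mathcal S)$ in $V[G]$, pull it into the wellfounded $N$ by absoluteness, and reflect back to $V$ by elementarity of $j$; your verification that $j[W]$ is $j(\vec I)$-good and $j(\hat F)$-closed is exactly the content of the paper's observation that every proper initial segment of that sequence lies in the tree. The forward direction of part \ref{item_WFultraEquiv} is likewise the same. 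Where you genuinely diverge is the converse of part \ref{item_WFultraEquiv}: the paper takes a $\vec U$-good $M\prec H_\theta$, uses Observation \ref{obs_DerivedIsElement} to see that the derived tower $\vec U^M$ equals $\sigma_M^{-1}(\vec U)$ and hence is an \emph{internal} tower of $H_M$ whose ultrapower embeds into $H_\theta$ and is therefore wellfounded, and then reflects ``wellfounded'' up via $\sigma_M$; you instead run the classical descending-chain argument, turning a putative $\in$-descent $[f_{n+1}]<[f_n]$ into measure-one sets $A_n$ and using goodness of an $M$ containing $\langle f_n\rangle$ to manufacture a strictly decreasing sequence of ordinals $f_n(M\cap V_{\kappa_n})$ in $V$. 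Both are correct; yours is more elementary (it avoids Observation \ref{obs_DerivedIsElement} and works directly with representatives, in the spirit of the standard ``good structures imply precipitousness'' arguments), while the paper's reflection argument is shorter once that observation is in place and makes visible exactly where ultrafilterhood is used (namely, that $\vec U^M$ is an element, not just a subset, of $H_M$).
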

\begin{proof}
First fix a cofinal sequence $\vec{\kappa} = \langle \kappa_n \ :  \ n \in \omega \rangle$ of cardinals in $\lambda$ (so that, by the convention mentioned earlier, $I_n = I_{V_{\kappa_n}}$).

To prove part \ref{item_PrecipImpliesProjection}, suppose $\vec{I}$ is precipitous.  To prove that $\vec{I}$ is a canonical projection of NS, by Lemma \ref{lem_Equiv_ProjNS} it suffices to prove that there are $\vec{I}$-projective-stationarily many $\vec{I}$-good substructures of $H_\theta$ (for some sufficiently large $\theta$); i.e.\ for every $T \in \vec{I}^+$ there are stationarily many $M \prec H_\theta$ such that $M \cap \cup T \in T$ and
\[
\forall n \in \omega \ \forall C \in M \cap \text{Dual}(I_n) \  M \cap \text{sprt}(I_n) \in C
\]
Let $n(T)$ be such that $\cup T = V_{\kappa_{n(T)}}$.  Let $\mathfrak{A} = (H_\theta, \in, \dots)$ be an algebra.  Let $\lambda$ be the height of $\vec{I}$ and fix some $W \prec \mathfrak{A}$ such that $V_\lambda \subset W$ and $|W|=|V_\lambda|$.  Fix a surjection $\phi: V_\lambda \to W$ as in the assumptions of Lemma \ref{lem_TreeGoodStruc}.  Set $\mathfrak{A}_W:= \mathfrak{A} \restriction W$ and for each $n$ let $W_n:= \phi[V_{\kappa_n}]$.  Fix a function $F:[W]^{<\omega} \to W$ such that any subset of $W$ closed under $F$ is elementary in $\mathfrak{A}_W$.  It suffices to find some $\vec{I}$-good subset of $W$ that is closed under $F$ and whose intersection with $V_{\kappa_{n(T)}} = \text{sprt}(I_{n(T)})$ is in $T$.

Let $G$ be $(V,\mathbb{B}_{\vec{I}})$-generic with $T \in G$ and $j: V \to N$ the generic ultrapower.  Since $\vec{I}$ is precipitous then $N$ is wellfounded.    Then $j[V_{\kappa_{n(T)}}] \in j(T)$ because $T \in G$.  In $N$ consider the tree $\mathcal{S}^{j(\vec{I}),j(F),n(T),j(T),j(\phi)}$ from Lemma \ref{lem_TreeGoodStruc}.  From the point of view of $V[G]$, every proper initial segment of the sequence
\[
b:= \langle  j\big[ W_n \big] \ : \ n \in \omega \rangle
\]
is an element of $N$ (because $j[\text{sprt}(I_n)] = j[V_{\kappa_n}] \in N$ for every $n \in \omega$) and moreover every proper initial segment of $b$ is an element of the tree $\mathcal{S}^{j(\vec{I}),j(F),n_T,j(T),j(\phi)}$.  Since $N$ is wellfounded in $V[G]$ and $V[G]$ believes that the tree $\mathcal{S}^{j(\vec{I}),j(F),n_T,j(T),j(\phi)}$ has a cofinal branch, then by absoluteness there exists a cofinal branch in $N$.  By Lemma \ref{lem_TreeGoodStruc}, $N$ believes there is a $j(\vec{I})$-good set closed under $j(F)$ whose intersection with $j(\kappa_{n(T)})$ is in $T$.  By elementarity of $j$ we are done with the proof of part \ref{item_PrecipImpliesProjection}.

For part \ref{item_WFultraEquiv}, assume $\vec{U} = \langle U_n \ : \ n \in \omega \rangle$ is an $\omega$-cofinal tower of normal ultrafilters.  If $\vec{U}$ has wellfounded ultrapower then it is a canonical projection of NS, by part \ref{item_PrecipImpliesProjection}.  Now suppose $\vec{U}$ is a canonical projection of NS.  By Lemma \ref{lem_Equiv_ProjNS} there are stationarily many $\vec{U}$-good structures in $H_\theta$ (for $\theta$ sufficiently large).  Fix some $\vec{U}$-good $M \prec (H_\theta, \in, \vec{U})$ and let $\sigma_M: H_M \to H_\theta$ be the inverse of the Mostowski collapsing map of $M$.  By Observation \ref{obs_DerivedIsElement}, the tower of ultrafilters $\vec{U}^M$ derived from $\sigma_M$ is the same as the preimage $\vec{U}_M:= \sigma_M^{-1}(\vec{U})$ of $\vec{U}$ in $H_M$.  Since $\text{ult}(H_M, \vec{U}^M)$ can always be embedded into $H_\theta$ and is hence wellfounded, then $\text{ult}(H_M, \vec{U}_M)$ is a wellfounded ultrapower that is internal to $H_M$.  So $H_M \models$ ``$\vec{U}_M$ is wellfounded", so by elementarity of $\sigma_M$, $H_\theta \models$ ``$ \sigma_M(\vec{U}_M) = \vec{U}$ is wellfounded".
\end{proof}

Recall that an \textbf{I3 embedding} is a nontrivial elementary embedding from $V_\lambda \to V_\lambda$ for some $\lambda$.  An \textbf{I2 embedding} is a nontrivial elementary embedding $j: V \to N$ where $N$ is transitive and $V_{\kappa_\omega} \subset N$, where $\kappa_\omega$ is the supremum of the critical sequence.\footnote{Recall the critical sequences is defined inductively by setting $\kappa_0:= \text{crit}(j)$ and $\kappa_{n+1}:= j(\kappa_n)$ for all $n \in \omega$.}

\begin{definition}\label{def_I2_tower}
We say that $\vec{I}$ is an \textbf{I3 tower} iff it is an $\omega$-cofinal tower of normal ideals and $\mathbb{B}_{\vec{I}}$ forces that the critical sequence of the generic ultrapower is contained in, and cofinal in, $\text{ht}(\vec{I})$.\footnote{I.e.\ it is forced that $\text{crit}(j_{\dot{G}}) < \text{ht}(\vec{I})$, $\dot{\kappa}_n:= j^n(\text{crit}(j_{\dot{G}}) )< \text{ht}(\vec{I})$ for every $n \in \omega$, and $\langle \dot{\kappa}_n \ : \ n \in \omega \rangle$ is cofinal in $\text{ht}(\vec{I})$.}

An \textbf{I3 tower of ultrafilters} is an I3 tower such that the dual of every ideal in the tower is an ultrafilter.
\end{definition}

%Note that if $\vec{U}$ is an I3 tower of ultrafilters, then its height is automatically strong limit, since (by an inductive proof) each $\kappa_n$ in its critical sequence is measurable.  Hence in that situation we can always regard each $U_n$ as an ultrafilter on $\kappa_n$ (rather than on $V_{\kappa_n}$), if desired.    

It is straightforward to see that for a regular uncountable $\kappa$, the following are equivalent:
\begin{enumerate}
 \item There exists an I3 embedding with critical point $\kappa$;
 \item There exists an I3 tower of ultrafilters with critical point $\kappa$.
\end{enumerate}
The proof essentially appears in Kanamori~\cite{MR1994835}, but we provide a very brief sketch.  If $j: V_\theta \to V_\theta$ has critical point $\kappa$, let $\kappa_\omega$ be the supremum of the critical sequence $\langle \kappa_n \ : \ n \in \omega \rangle$.  Then $j \restriction V_{\kappa_\omega}$ is an elementary embedding from $V_{\kappa_\omega} \to V_{\kappa_\omega}$.  For each $n$ let 
\[
U_n:= \{ A \subseteq V_{\kappa_n} \ :  \  j[V_{\kappa_n}] \in j(A)  \}
\]  
Then $\vec{U}$ is an I3 tower of ultrafilters of height $\kappa_\omega$.  Conversely, suppose $\vec{U} = \langle U_n \ : \ n \in \omega \rangle$ is an I3 tower of ultrafilters of height $\lambda$ with critical point $\kappa$.  Let $j: V \to_{\vec{U}} N$ be the ultrapower of $V$ by $\vec{U}$; then $N$ is wellfounded below $\lambda$, and by definition (noting that $\mathbb{B}_{\vec{U}}$ is the trivial forcing), the critical sequence is cofinal in $\lambda$.  It follows that $j \restriction V_\lambda$ is an elementary embedding from $V_\lambda \to V_\lambda$.

On the other hand, I3 towers which happen to be canonical projections of NS yield I2 embeddings, and vice-versa:

\begin{theorem}\label{thm_CharI2_Project}
Let $\kappa$ be regular uncountable.  The following are equivalent:
\begin{enumerate}
 \item\label{item_IsCritI2} $\kappa$ is the critical point of some I2 embedding.
 \item\label{item_ExistsI2towerWF} There exists an I3 tower of ultrafilters of completeness $\kappa$ with wellfounded ultrapower.
 \item\label{item_ExistsI2towerProj} There exists an I3 tower of ultrafilters of completeness $\kappa$ that is a canonical projection of NS.
\end{enumerate}
\end{theorem}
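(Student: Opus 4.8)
The statement asserts the equivalence of three conditions about a regular uncountable $\kappa$: being the critical point of an I2 embedding, the existence of an I3 tower of ultrafilters of completeness $\kappa$ with wellfounded ultrapower, and the existence of such a tower that is a canonical projection of NS. My plan is to prove the cycle $\ref{item_IsCritI2} \Rightarrow \ref{item_ExistsI2towerWF} \Rightarrow \ref{item_ExistsI2towerProj} \Rightarrow \ref{item_IsCritI2}$. The middle implication $\ref{item_ExistsI2towerWF} \Rightarrow \ref{item_ExistsI2towerProj}$ is immediate from Theorem \ref{thm_Equiv_WF_project}\ref{item_WFultraEquiv}, since for a tower of ultrafilters ``wellfounded ultrapower'' and ``canonical projection of NS'' are already known to be equivalent; the substantive work lies in the other two implications, which connect the I2 and I3 notions through the wellfoundedness of the ultrapower.

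\textbf{The implication $\ref{item_IsCritI2} \Rightarrow \ref{item_ExistsI2towerWF}$.}
Suppose $j: V \to N$ is an I2 embedding with critical point $\kappa$, so $N$ is transitive and $V_{\kappa_\omega} \subseteq N$, where $\kappa_\omega = \sup_n \kappa_n$ is the supremum of the critical sequence. I would derive the I3 tower of ultrafilters exactly as in the sketch preceding the theorem: for each $n$, set
\[
U_n := \{ A \subseteq V_{\kappa_n} \ : \ j[V_{\kappa_n}] \in j(A) \}.
\]
The point is that, because $V_{\kappa_\omega} \subseteq N$, the seeds $j[V_{\kappa_n}]$ genuinely live in $N$, so each $U_n$ is well-defined and the tower $\vec{U}$ is $\omega$-cofinal of height $\kappa_\omega$ with completeness $\kappa$ and has the I3 property (its critical sequence is cofinal in $\kappa_\omega$). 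The key extra point, beyond the I3 case, is that the ultrapower $\text{ult}(V,\vec{U})$ is \emph{wellfounded}: the canonical factor map from $\text{ult}(V,\vec{U})$ into $N$ is elementary, and since $N$ is transitive (hence wellfounded), wellfoundedness of the ultrapower follows. This is precisely where I2 (transitivity of the full target $N$) is stronger than I3 (wellfoundedness only below $\lambda$).

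\textbf{The implication $\ref{item_ExistsI2towerProj} \Rightarrow \ref{item_IsCritI2}$, which I expect to be the main obstacle.}
Suppose $\vec{U}$ is an I3 tower of ultrafilters of completeness $\kappa$ that is a canonical projection of NS. By Theorem \ref{thm_Equiv_WF_project}\ref{item_WFultraEquiv}, $\vec{U}$ has a wellfounded ultrapower $j: V \to N$ with $N$ transitive. Since $\vec{U}$ is an I3 tower (and $\mathbb{B}_{\vec{U}}$ is trivial, as these are ultrafilters), the critical sequence $\langle \kappa_n \rangle$ is cofinal in the height $\lambda = \kappa_\omega$, and $N$ is correct at least up to $\lambda$, so $j \restriction V_\lambda : V_\lambda \to V_\lambda$ is an I3 embedding. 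The real work is to upgrade this to an I2 embedding, i.e.\ to show $V_{\kappa_\omega} \subseteq N$. Here I would argue that because $\vec{U}$ is a tower derived from $j$ itself and is $\omega$-cofinal with seeds $j[V_{\kappa_n}]$, each such seed is captured, so that $j[V_{\kappa_n}] \in N$ for every $n$; combined with wellfoundedness of $N$ and cofinality of the critical sequence, a standard computation (showing every subset of $V_{\kappa_n}$ is represented, and chaining these representations up through $\lambda$) yields $V_\lambda = V_{\kappa_\omega} \subseteq N$, which is exactly the I2 requirement. The delicate step—and the one I expect to require the most care—is verifying that wellfoundedness of the \emph{tower} ultrapower, together with the I3 cofinality of the critical sequence, suffices to recover all of $V_{\kappa_\omega}$ inside $N$ rather than merely the correctness of $N$ below $\lambda$ that the I3 property alone guarantees; this is what genuinely separates I2 from a bare I3 tower with a wellfounded ultrapower.
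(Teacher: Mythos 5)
Your proposal is correct and follows essentially the same route as the paper: the paper disposes of \ref{item_IsCritI2}$\iff$\ref{item_ExistsI2towerWF} by citing the standard argument in Kanamori (which is exactly the seed-tower construction and factor-map/seed-recovery computation you spell out) and of \ref{item_ExistsI2towerWF}$\iff$\ref{item_ExistsI2towerProj} by invoking part \ref{item_WFultraEquiv} of Theorem \ref{thm_Equiv_WF_project}, just as you do. Arranging it as a cycle rather than two biconditionals is only a cosmetic difference.
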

\begin{proof}
The equivalence of \ref{item_IsCritI2} with \ref{item_ExistsI2towerWF} is standard, see Chapter 24 of Kanamori~\cite{MR1994835}.  The equivalence of \ref{item_ExistsI2towerWF} with \ref{item_ExistsI2towerProj} follows from part \ref{item_WFultraEquiv} of Theorem \ref{thm_Equiv_WF_project}.
\end{proof}

Although uncountably cofinal towers of ultrafilters always yield wellfounded ultrapowers, this is not the case with $\omega$-cofinal towers of ultrafilters:
\begin{lemma}\label{lem_IllfoundedI2}
If there is an I2 embedding, then there is an I3 tower of ultrafilters with illfounded ultrapower.
\end{lemma}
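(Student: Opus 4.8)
The plan is to reduce illfoundedness to an explicit infinite descent along the tower, and then to manufacture that descent from the I2 embedding. Write $j\colon V\to N$ for the given embedding, $\kappa=\text{crit}(j)$, $\langle\kappa_n:n\in\omega\rangle$ its critical sequence, and $\lambda=\sup_n\kappa_n$; since $\text{cf}(\lambda)=\omega<\kappa$ the map $j$ is continuous at $\lambda$, so $j(\lambda)=\lambda$, $V_\lambda\subseteq N$, and $i:=j\restriction V_\lambda\colon V_\lambda\to V_\lambda$ is an I3 embedding. For a prospective $\omega$-cofinal tower $\vec W=\langle W_n:n\in\omega\rangle$ of normal ultrafilters on $\langle V_{\mu_n}:n\in\omega\rangle$ (each $\text{ult}(V,W_n)$ transitive, by countable completeness of the individual $W_n$), the direct limit $\text{ult}(V,\vec W)$ is illfounded if and only if there are ordinal functions $f_n\colon V_{\mu_n}\to\text{ORD}$ with, for every $n$,
\[
\{\, M : f_{n+1}(M)<f_n(M\cap V_{\mu_n})\,\}\in W_{n+1};
\]
indeed the threads $[n,[f_n]_{W_n}]$ then form an infinite descending $\in$-chain through the limit, while conversely any descending chain is represented this way because each $\text{ult}(V,W_n)$ is wellfounded. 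Thus it suffices to build a coherent sequence of normal ultrafilters whose tower critical sequence is cofinal in $\sup_n\mu_n$ (so that $\vec W$ is an I3 tower) together with functions $f_n$ realizing the displayed descent.

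The genuine tower $\vec U=\langle U_n\rangle$ derived from $j$, with $U_n=\{A\subseteq V_{\kappa_n}:j[V_{\kappa_n}]\in j(A)\}$, will \emph{not} do: its comparison maps $k_{n,n+1}\colon\text{ult}(V,U_n)\to\text{ult}(V,U_{n+1})$ fix the seeds $j[V_{\kappa_n}]$, so no descent is possible, and indeed $\vec U$ is wellfounded (it is the canonical projection of $\text{NS}$ furnished by Theorem \ref{thm_CharI2_Project}). The point is therefore to choose the $W_n$ so that the comparison maps are \emph{nontrivial}, with critical points climbing toward $\lambda$, leaving room for a descending thread. The construction I would attempt exploits the discontinuity of $j$ off $\lambda$: at ordinals $\mu$ with $\text{cf}(\mu)\ge\kappa$ one has $\sup j[\mu]<j(\mu)$, and this gap is exactly what separates an illfounded limit from a wellfounded one. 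Concretely, I would take $W_n$ to be a normal ultrafilter whose comparison map to the next level is a copy of a $j$-type embedding, so that $k_{n,n+1}$ has a critical point that is moved strictly upward; coherence would be arranged so that $\langle W_n\rangle$ is a genuine tower and the tower embedding's critical sequence stays cofinal in the height, and the $f_n$ would track a seed lying in the gap $[\sup j[\mu],j(\mu))$, which the comparison maps push up, yielding the descent above.

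The main obstacle is precisely the simultaneous verification that such a $\vec W$ is (i) a coherent tower of genuinely \emph{normal} ultrafilters, (ii) an I3 tower, and (iii) illfounded. For (iii) I would avoid computing the direct limit directly and instead invoke the absoluteness in Lemma \ref{lem_TreeGoodStruc}: wellfoundedness of $\text{ult}(V,\vec W)$ is equivalent to the existence of a cofinal branch through the associated tree, and this equivalence is absolute to outer models. It then suffices to show that tree has no cofinal branch in $V$, which by the ``moreover'' clause of Lemma \ref{lem_Equiv_ProjNS} amounts to showing the $\vec W$-good substructures are nonstationary; I would derive this from the gap $\sup j[\mu]<j(\mu)$, since a good structure would produce a wellfounded ultrapower absorbing the missing thread and contradict the discontinuity of $j$. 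Finally, the gap argument requires $j$ to be a genuine I2 but not I1 embedding, so I would first pass by reflection to an I2 embedding whose critical point is the least I2 critical point (which is therefore not an I1 critical point); this is harmless, as a single illfounded tower is all that is needed.
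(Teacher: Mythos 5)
There is a genuine gap: the central object of your argument, the tower $\vec W$, is never constructed. You correctly identify that the tower $\vec U$ derived from $j$ is wellfounded and hence useless, and you correctly isolate ``the main obstacle'' as simultaneously arranging that $\vec W$ is a coherent tower of normal ultrafilters, is an I3 tower, and is illfounded --- but you then leave that obstacle standing. Phrases like ``a normal ultrafilter whose comparison map to the next level is a copy of a $j$-type embedding'' and ``coherence would be arranged so that\dots'' are placeholders for the entire mathematical content of the lemma. The supporting heuristics do not fill the hole: discontinuity of $j$ at ordinals of cofinality $\ge\kappa$ (the gap $\sup j[\mu]<j(\mu)$) is a feature of \emph{every} ultrapower embedding, wellfounded ones included, so it cannot by itself force a descending thread; and the claim that a $\vec W$-good structure ``would contradict the discontinuity of $j$'' is not an argument. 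The closing reduction to a non-I1 critical point is likewise unmotivated, since nothing in your sketch ever uses it.

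For contrast, the paper's proof sidesteps any direct construction by a minimality-plus-reflection trick. Let $\kappa$ be the \emph{least} critical point of a wellfounded I2 embedding, realized as the ultrapower $j\colon V\to N$ of an I3 tower of ultrafilters $\vec U$ of height $\lambda$, so $V_\lambda\subset N$. Inside $N$ form the tree $\mathcal S$ of finite partial I3 towers of ultrafilters in $V_\lambda$ with critical point $\kappa$; every proper initial segment of $\vec U$ lies in $N$, so $\mathcal S$ has a cofinal branch in $V$, hence in $N$ by absoluteness. Thus $N$ sees an I3 tower with critical point $\kappa<j(\kappa)$, so by elementarity $V$ has an I3 tower of ultrafilters with critical point below $\kappa$ --- and by the minimal choice of $\kappa$ that tower must have illfounded ultrapower. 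If you want to salvage your write-up, the cleanest repair is to adopt this reflection argument rather than attempt an explicit illfounded tower; note that the minimality of $\kappa$ is doing the work that your unconstructed $\vec W$ was supposed to do.
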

\begin{proof}
Let $\kappa$ be the least cardinal such that there exists a wellfounded I2 embedding with critical point $\kappa$.  Without loss of generality we may assume this embedding is an ultrapower by an I3 tower of ultrafilters $\vec{U} = \langle U_n \ : \ n \in \omega \rangle$.  Let $\lambda$ be the height of $\vec{U}$, and $j: V \to_{\vec{U}} N$ the ultrapower map, where $N$ is transitive.  Then $V_\lambda \subset N$.  In $N$ consider the tree $\mathcal{S}$ of all finite, partial I3 towers of ultrafilters in $V_\lambda$ with critical point $\kappa$, ordered by extension.  Every proper initial segment of $\vec{U}$ is an element of $N$ and hence an element of $\mathcal{S}$.  Hence $\vec{U}$ is a cofinal branch through $\mathcal{S}$, and by absoluteness $N$ believes there is a cofinal branch through $\mathcal{S}$.  So 
\[
N \models \ \text{There exists an I3 tower of ultrafilters with critical point } \kappa
\]
Since $\kappa < j(\kappa)$ then $V$ believes there exists an I3 tower of ultrafilters with critical point $<\kappa$.  By minimality of $\kappa$, this I3 tower must be illfounded.
\end{proof}

Notice that if $\vec{I}$ is an I3 tower of height $\aleph_\omega$, then the requirement in Definition \ref{def_I2_tower} that the critical sequence is forced to be cofinal in $\aleph_\omega^V$ is redundant;  it follows already from the requirement that the critical sequence is forced to be contained in $\aleph_\omega^V$.  To see this, suppose $G$ is $\mathbb{B}_{\vec{I}}$-generic over $V$, $j_G: V \to N_G$ is the generic ultrapower, $\langle \kappa_n \ : \ n \in \omega \rangle$ is the critical sequence, and $\{ \kappa_n \ : \ n \in \omega  \} \subset \aleph^V_\omega$.  Then $\kappa_0 = \text{crit}(j_G)$ is a cardinal in $V$, and $H^V_{\aleph_\omega} \subset N_G$ by basic properties of tower ultrapowers.  Using the assumption that $\{ \kappa_n \ : \ n \in \omega \} \subset \aleph^V_\omega$, an inductive proof then yields that $\kappa_{n+1} = j_G(\kappa_n)$ is a cardinal in both $V$ and $N_G$ for every $n \in \omega$.  It follows that $\aleph_\omega^V$ is the supremum of $\vec{\kappa}$.

\begin{definition}
Given a normal system of filters $\vec{F}$, we say that $\boldsymbol{\vec{F}}$ \textbf{decides its critical sequence} iff there is a sequence $\vec{\kappa} = \langle \kappa_n \ : \ n \in \omega \rangle$ (in the ground model) such that $\mathbb{B}_{\vec{F}}$ forces $\check{\vec{\kappa}}$ to be the critical sequence of the generic ultrapower map.  

%We say that $\boldsymbol{\vec{F}}$ \textbf{densely often decides its critical sequence} iff there are densely many conditions in $(a,D) \in \mathbb{B}_{\vec{F}}$ such that $\vec{F} \restriction (a,D)$ decides its critical sequence.  
\end{definition}

The following lemma provides a characterization of I3 towers that decide their critical sequence without referring to the forcing relation.

\begin{lemma}\label{lem_DecidesCritImplies}
Suppose $\lambda$ is a singular cardinal of cofinality $\omega$ and $\vec{I}=\langle I_x \ : \ x \in V_\lambda \rangle$ is a tower of ideals of height $\lambda$.  Suppose $\vec{\kappa} = \langle \kappa_n \ : \ n \in \omega \rangle$ is increasing and cofinal in $\lambda$.  The following are equivalent:
 \begin{enumerate}
  \item\label{eq_IsI2TowerAndDecides} $\vec{I}$ is an I3 tower, and decides its critical sequences as $\vec{\kappa}$
  \item\label{eq_ConcentrateC_n} $I_{\kappa_0}$ is $\kappa_0$-complete, and for every $n \ge 1$:
\[
C_n:=\{ X \subset \kappa_n \ : \ \forall i \in [1,n] \ \text{otp}(X \cap \kappa_i) = \kappa_{i-1} \} \in \text{Dual}(I_{\kappa_n}) 
\] 
 \end{enumerate}
\end{lemma}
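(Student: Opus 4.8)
The plan is to translate both conditions into statements about the generic ultrapower $j_G : V \to N_G$ obtained by forcing with $\mathbb{B}_{\vec{I}}$, and then to match them up coordinate by coordinate. The basic dictionary I will use is the standard fact for normal towers that, at each coordinate $\kappa_n$, the generic ultrafilter $G_{\kappa_n}$ has seed $j_G[\kappa_n] \in N_G$, and that a set $A \subseteq \wp(\kappa_n)$ satisfies $A \in \text{Dual}(I_{\kappa_n})$ if and only if $\Vdash_{\mathbb{B}_{\vec{I}}} j_G[\kappa_n] \in j_G(A)$ (measure-one sets are exactly those forced to contain the seed).

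The heart of the argument is the following computation, which I would establish first. Fix $n \ge 1$. Applying $j_G$ to the definition of $C_n$, the statement $j_G[\kappa_n] \in j_G(C_n)$ asserts, inside $N_G$, that $\text{otp}(j_G[\kappa_n] \cap j_G(\kappa_i)) = j_G(\kappa_{i-1})$ for each $i \in [1,n]$. Since $j_G$ is order preserving on ordinals, $j_G[\kappa_n] \cap j_G(\kappa_i) = j_G[\kappa_i]$, which is again a seed and hence an element of $N_G$; and because $j_G \restriction \kappa_i$ is an external order isomorphism of $\kappa_i$ onto $j_G[\kappa_i]$, whatever ordinal $N_G$ assigns as $\text{otp}(j_G[\kappa_i])$ must lie in the wellfounded part of $N_G$ and equal the genuine ordinal $\kappa_i$ — so no wellfoundedness of $N_G$ need be assumed. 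Hence $j_G[\kappa_n] \in j_G(C_n)$ holds exactly when $j_G(\kappa_{i-1}) = \kappa_i$ for all $i \in [1,n]$, and combined with the dictionary above this yields
\[
C_n \in \text{Dual}(I_{\kappa_n}) \iff \Vdash_{\mathbb{B}_{\vec{I}}} \bigwedge_{1 \le i \le n} j_G(\kappa_{i-1}) = \kappa_i .
\]

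Next I would record the completeness translation, a standard fact: $I_{\kappa_0}$ is $\kappa_0$-complete if and only if $\Vdash_{\mathbb{B}_{\vec{I}}} \text{crit}(j_G) \ge \kappa_0$. The forward direction is the usual induction on $\gamma < \kappa_0$ showing $j_G(\gamma) = \gamma$: at coordinate $\kappa_0$ any function representing an ordinal below $j_G(\gamma)$ takes fewer than $\kappa_0$ values and is therefore constant on a measure-one set, whence $j_G \restriction \kappa_0 = \text{id}$. Conversely, a failure of $\kappa_0$-completeness supplies a partition of a positive set into $\gamma < \kappa_0$ null pieces, and on that positive condition the function selecting the piece represents a new ordinal below $j_G(\gamma)$, forcing $\text{crit}(j_G) < \kappa_0$.

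With these two translations in hand both implications are immediate. For \ref{eq_IsI2TowerAndDecides} $\Rightarrow$ \ref{eq_ConcentrateC_n}: deciding the critical sequence as $\vec{\kappa}$ gives $\text{crit}(j_G) = \kappa_0$ (hence $\kappa_0$-completeness) and $j_G(\kappa_{i-1}) = \kappa_i$ for every $i$ (hence each $C_n \in \text{Dual}(I_{\kappa_n})$). For \ref{eq_ConcentrateC_n} $\Rightarrow$ \ref{eq_IsI2TowerAndDecides}: $\kappa_0$-completeness gives $\Vdash \text{crit}(j_G) \ge \kappa_0$, while the $C_n$'s give $\Vdash j_G(\kappa_{i-1}) = \kappa_i$ for all $i$; in particular $j_G(\kappa_0) = \kappa_1 > \kappa_0$ pins $\text{crit}(j_G) = \kappa_0$, and an easy induction then identifies the critical sequence as $\vec{\kappa}$. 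Since the $\kappa_i$ are genuine ordinals below $\lambda$ and $\vec{\kappa}$ is cofinal in $\lambda$ by hypothesis, $\vec{I}$ is an I3 tower deciding its critical sequence as $\vec{\kappa}$. I expect the one genuinely delicate point to be the order-type computation in the possibly illfounded $N_G$ — namely that $\text{otp}^{N_G}(j_G[\kappa_i])$ is forced to be the standard ordinal $\kappa_i$ — since this is exactly what lets the purely combinatorial condition \ref{eq_ConcentrateC_n} deliver the wellfoundedness built into the I3-tower requirement.
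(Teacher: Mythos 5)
Your proposal is correct and follows essentially the same route as the paper's proof: translate membership of the seed $j_G[\kappa_n]$ in $j_G(C_n)$ into the statements $j_G(\kappa_{i-1})=\kappa_i$ via the order-type computation $\text{otp}\big(j_G[\kappa_i]\big)=\kappa_i$, use $\kappa_0$-completeness of $I_{\kappa_0}$ for the lower bound on the critical point, and use $C_1$ for the upper bound. The one slightly loose spot is your parenthetical justification of ``$\kappa_0$-complete $\Rightarrow\ \Vdash \text{crit}(j_{\dot G})\ge\kappa_0$'', since an ordinal below $j_G(\gamma)$ in the direct limit may be represented at a coordinate other than $\kappa_0$; this is covered by the standard fact that, by normality, fineness, and the coherence of the tower, $\kappa_0$-completeness of $I_{\kappa_0}$ propagates to every $I_{\kappa_n}$, which the paper likewise treats as immediate.
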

\begin{proof}
For the \eqref{eq_IsI2TowerAndDecides} $\implies$ \eqref{eq_ConcentrateC_n} direction:  that $I_{\kappa_0}$ is $\kappa_0$-complete follows easily from the assumption that $\kappa_0$ is forced to be the critical point of generic ultrapowers by $\vec{I}$.  Now fix $n \ge 1$.  Let $G$ be an arbitrary generic for $\mathbb{B}_{\vec{I}}$ and $j: V \to N$ the generic ultrapower; then $\vec{\kappa}$ is the critical sequence of $j$ by assumption.  Then
\[
\forall i \in [1,n] \ \ j[\kappa_n] \cap j(\kappa_i) = j[\kappa_i]  \text{ and } \text{otp}\big( j[\kappa_i] \big) = \kappa_{i-1}
\]
so $j[\kappa_n] \in j(C_n)$.  Since $G$ was arbitrary, it follows that $C_n \in \text{Dual}(I_{\kappa_n})$.

For the \eqref{eq_ConcentrateC_n} $\implies$ \eqref{eq_IsI2TowerAndDecides}  direction, assume $G$ is $\mathbb{B}_{\vec{I}}$-generic over $V$ and $j: V \to N$ is the generic ultrapower.  We just need to prove that $\vec{\kappa}$ is the critical sequence of $j$; the background assumption that $\vec{\kappa}$ is cofinal in $\lambda$ takes care of the remaining requirement of Definition \ref{def_I2_tower}.  First, the $\kappa_0$-completeness of $I_{\kappa_0}$ ensures that $\kappa_0 \le \text{crit}(j)$.  Now $C_1 \in \text{Dual}(I_{\kappa_1})$, so $C_1 \in G$; hence  $j[\kappa_1] \in j(C_1)$ so $j[\kappa_1]$ has ordertype exactly $\kappa_0$, which implies the other inequality $\kappa_0 \ge \text{crit}(j)$.  Hence
\begin{equation}\label{eq_Kappa0}
\text{crit}(j) = \kappa_0 \text{ and } j(\kappa_0) = \kappa_1
\end{equation}

Let $\langle \kappa'_n \ : \ n \in \omega \rangle$ enumerate the sequence $j(\vec{\kappa})$; so $\kappa'_\ell = \kappa_{\ell+1}$ for every $\ell \in \omega$.  Now for any $n \ge 1$, since $C_n \in G$ then $j[\kappa_n] \in j(C_n)$, so by the definition of $j(C_n)$ we have
\[
\forall n \ge 1 \ \ \forall i \in [1,n] \ \ \text{otp}\big( j[\kappa_n] \cap \kappa'_i \big) =  \kappa'_{i-1}  
\]    
and it follows that $j(\kappa'_{i-1}) = \kappa'_i$ for all $i \ge 1$; equivalently that $j(\kappa_\ell) = \kappa_{\ell+1}$ for every $\ell \ge 1$.  Together with \eqref{eq_Kappa0} this yields that $\vec{\kappa}$ is the critical sequence of $j$.
\end{proof}

Silver proved the following theorem, which we phrase in modern terminology:
\begin{theorem}[Silver; see Foreman-Magidor~\cite{MR1846032}]\label{thm_Silver}
Assume $2^\omega < \aleph_\omega$.  The following are equivalent (only the \ref{item_Silver_Jonsson} $\implies$ \ref{item_Silver_MS} direction uses the assumption that $2^\omega < \aleph_\omega$):
\begin{enumerate}
 \item\label{item_Silver_Jonsson} $\aleph_\omega$ is Jonsson
 \item\label{item_Silver_MS} There exists some pair of sequences $\langle \kappa_i \ : \ i \in \omega \rangle$ and $\langle \mu_i \ : \ i \in \omega \rangle$, both subsequences of the $\aleph_n$'s, such that 
 \[
 \langle \kappa_i \cap \text{cof}(\mu_i) \ : \ i \in \omega \rangle
 \]
 is mutually stationary.\footnote{Mutual stationarity was introduced in Foreman-Magidor~\cite{MR1846032}.  In the current setting, mutual stationarity of $\langle \kappa_i \cap \text{cof}(\mu_i) \ : \ i \in \omega \rangle$ means that for every $F: [\aleph_\omega]^{<\omega} \to \aleph_\omega$ there exists some set $X$ closed under $F$ such that $\text{cf}\big(\text{sup}(X \cap \kappa_i)\big)=\mu_i$ for all $i \in \omega$. }
\end{enumerate}
\end{theorem}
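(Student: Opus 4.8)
The plan is to prove the two implications separately, using the hypothesis $2^\omega < \aleph_\omega$ only for the direction $(\ref{item_Silver_Jonsson}) \implies (\ref{item_Silver_MS})$.

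For $(\ref{item_Silver_MS}) \implies (\ref{item_Silver_Jonsson})$ I would argue directly. Suppose $\vec{\kappa} = \langle \kappa_i \ : \ i \in \omega\rangle$ and $\vec{\mu} = \langle \mu_i \ : \ i \in \omega\rangle$ witness mutual stationarity. Since the pattern requires $\mu_i < \kappa_i$ (so that $\text{cof}(\mu_i) \cap \kappa_i$ is a genuinely stationary subset of $\kappa_i$), and since both are infinite increasing subsequences of the $\aleph_n$'s, they are cofinal in $\aleph_\omega$ with $\mu_i < \kappa_i$. Given an arbitrary $F \colon [\aleph_\omega]^{<\omega} \to \aleph_\omega$, mutual stationarity supplies a set $X$ closed under $F$ with $\text{cf}\big(\sup(X \cap \kappa_i)\big) = \mu_i$ for every $i$. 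The inequality $\mu_i < \kappa_i$ forces $\sup(X \cap \kappa_i) < \kappa_i$, so $X \neq \aleph_\omega$; and since $|X \cap \kappa_i| \geq \mu_i$ while the $\mu_i$ are cofinal in $\aleph_\omega$, we get $|X| = \aleph_\omega$. Thus $X$ is a proper subalgebra of full cardinality, witnessing that $\aleph_\omega$ is Jonsson. This direction uses nothing about $2^\omega$.

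For the converse I would argue by contradiction, the idea being to manufacture a single Jonsson witness whose own cofinality pattern contradicts the assumed failure of mutual stationarity. Call a pair $P = (\vec{\kappa}, \vec{\mu})$ of increasing subsequences with $\mu_i < \kappa_i$ a \emph{pattern}; there are at most $2^\omega$ of them. Assuming $(\ref{item_Silver_MS})$ fails, for each pattern $P$ fix an algebra $F_P \colon [\aleph_\omega]^{<\omega} \to \aleph_\omega$ that \emph{kills} $P$, i.e.\ no set closed under $F_P$ realizes $\text{cf}\big(\sup(X \cap \kappa_i^P)\big) = \mu_i^P$ for all $i$. Because $2^\omega < \aleph_\omega$ there are fewer than $\aleph_\omega$ killing algebras, so I would fold them, together with Skolem functions for a rich structure $(H_{\aleph_{\omega+1}}, \in, \vartriangleleft, \dots)$, into a single $F^* \colon [\aleph_\omega]^{<\omega} \to \aleph_\omega$ such that any set closed under $F^*$ is closed under every $F_P$. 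Applying Jonssonness to $F^*$ produces a proper $X$ with $|X| = \aleph_\omega$. Reading off the cofinalities $\text{cf}\big(\sup(X \cap \aleph_n)\big)$ at the levels where $X$ is bounded yields a pattern $P(X)$ that $X$ itself realizes; but $X$ is closed under $F_{P(X)}$, contradicting that $F_{P(X)}$ kills $P(X)$.

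The hard part will be two technical points hidden in the last paragraph. First, the folding must be arranged so that the witness $X$ is closed under the \emph{particular} killer $F_{P(X)}$ determined by $X$ itself; since the $2^\omega$ patterns are coded by reals, it suffices to guarantee $\wp(\omega) \subseteq X$, and securing a proper full-size witness that absorbs this bounded block of size $2^\omega < \aleph_\omega$ is exactly where the cardinal-arithmetic hypothesis is used. Second, and this is the genuine obstacle, I must ensure that $X$ realizes a \emph{legitimate} pattern at all: that there are cofinally many $n$ with $\sup(X \cap \aleph_n) < \aleph_n$ and that the cofinalities of these suprema are themselves unbounded in $\aleph_\omega$, so that $P(X)$ consists of increasing subsequences with $\mu_i < \kappa_i$. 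I would establish this by analysing $\delta := \min(\aleph_\omega \setminus X)$ and the transitive collapse of $X$, using that $|X| = \aleph_\omega$ makes the cardinalities $|X \cap \aleph_n|$ cofinal in $\aleph_\omega$ while properness prevents $X$ from being an initial segment, thereby handling the degenerate case in which $X$ is cofinal in cofinally many $\aleph_n$.
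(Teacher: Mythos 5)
Your proposal is correct in outline, but it is a reconstruction of the classical Silver/Foreman--Magidor argument rather than the route this paper takes; the paper never proves Theorem \ref{thm_Silver} directly, instead deriving it as a byproduct of Theorem \ref{thm_ImproveSilver}. Concretely: the paper takes the stationary set of Jonsson $M \prec H_\theta$ containing all the reals (Corollary \ref{cor_JonssonWithReals}), observes that the critical sequence of $\sigma_M$ is coded by a real $s_M \in M$, and applies Fodor's Lemma to the regressive map $M \mapsto t_M$ to stabilize a single pattern $t$ on a stationary set; the mutually stationary sequence is then read off from where the transitive collapse of $M$ moves cardinals, giving the sharp conclusion $\text{otp}(M \cap \kappa_i) = \kappa_{i-1}$ and hence $\mu_i = \kappa_{i-1}$. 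You instead diagonalize against the $2^\omega$ many killing algebras and read the pattern off the cofinalities of $\text{sup}(X \cap \aleph_n)$. Both arguments spend the hypothesis $2^\omega < \aleph_\omega$ in the same place (getting the reals, equivalently the code of the relevant pattern, inside the witnessing structure), but the paper's pressing-down formulation completely sidesteps what you rightly call the genuine obstacle: since the pattern comes from the critical sequence of $\sigma_M$ rather than from cofinality computations, its legitimacy (increasing, $\mu_i < \kappa_i$, unbounded) is automatic, and one even gets order types rather than mere cofinalities. To finish your version you would still need two standard ingredients: the lemma that an $X \prec (H_\theta, \in, \vartriangleleft)$ which is unbounded in some regular $\aleph_n$ must contain $\aleph_n$ (so $X \neq \aleph_\omega$ forces $\text{sup}(X \cap \aleph_n) < \aleph_n$ for all but finitely many $n$, ruling out your ``degenerate case'' rather than handling it), and the estimate $|X \cap \aleph_n| \le |X \cap \aleph_{n-1}| \cdot \text{cf}\big(\text{sup}(X \cap \aleph_n)\big)$, which shows the cofinalities are unbounded along the subsequence where $|X \cap \aleph_n|$ increases. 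Finally, your reading that $\mu_i < \kappa_i$ is implicit is correct but deserves a sentence, since the footnote's literal definition of mutual stationarity would otherwise be vacuously satisfiable by $X = \aleph_\omega$ when $\mu_i = \kappa_i$.
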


We provide another characterization of ``$\aleph_\omega$ is Jonsson" (assuming $2^\omega < \aleph_\omega$), from which Silver's theorem can be easily derived:
\begin{theorem}\label{thm_ImproveSilver}
Assume $2^\omega < \aleph_\omega$.  The following are equivalent (the assumption $2^\omega < \aleph_\omega$ is only used for the \ref{item_Sean_Jonsson} $\implies$ \ref{item_Sean_ProjectionNS} direction):
\begin{enumerate}
 \item\label{item_Sean_Jonsson} $\aleph_\omega$ is Jonsson;
 \item\label{item_Sean_ProjectionNS} There exists an I3 tower of height $\aleph_\omega$ that decides its critical sequence, \textbf{and} is a canonical projection of NS.
\end{enumerate}
\end{theorem}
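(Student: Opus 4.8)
The plan is to prove the two implications separately. The direction \ref{item_Sean_ProjectionNS}$\implies$\ref{item_Sean_Jonsson} is the easy one and needs no cardinal arithmetic, while \ref{item_Sean_Jonsson}$\implies$\ref{item_Sean_ProjectionNS} will route through Silver's Theorem \ref{thm_Silver}.

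For \ref{item_Sean_ProjectionNS}$\implies$\ref{item_Sean_Jonsson}, suppose $\vec{I}$ is an I3 tower of height $\aleph_\omega$ that decides its critical sequence $\vec{\kappa}=\langle \kappa_n \ : \ n \in \omega\rangle$ and is a canonical projection of NS. By Lemma \ref{lem_Equiv_ProjNS} the $\vec{I}$-good substructures of $H_\theta$ (for large $\theta$) are $\vec{I}$-projective stationary, hence in particular stationary; and by Lemma \ref{lem_DecidesCritImplies} we have $C_n \in \text{Dual}(I_{\kappa_n})$ for every $n \ge 1$. Given any $F:[\aleph_\omega]^{<\omega}\to\aleph_\omega$, I would expand $H_\theta$ to an algebra with $\vec{I}$, $\vec{\kappa}$, and $F$ as designated parameters and use stationarity to choose an $\vec{I}$-good $M$ closed under this algebra. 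Since $C_n$ is definable from $\vec{\kappa}$ it lies in $M \cap \text{Dual}(I_{\kappa_n})$, so $\vec{I}$-goodness forces $M \cap \kappa_n \in C_n$, i.e.\ $\text{otp}(M \cap \kappa_i)=\kappa_{i-1}$ for every $i$. Consequently $X:=M\cap\aleph_\omega$ is closed under $F$, has cardinality $\sup_i \kappa_{i-1}=\aleph_\omega$, and is a \emph{proper} subset of $\aleph_\omega$ (it omits ordinals below $\kappa_1$, since $\text{otp}(X\cap\kappa_1)=\kappa_0<\kappa_1$). As $F$ was arbitrary, $\aleph_\omega$ is Jonsson.

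For \ref{item_Sean_Jonsson}$\implies$\ref{item_Sean_ProjectionNS}, assume $2^\omega<\aleph_\omega$ and that $\aleph_\omega$ is Jonsson. By Silver's Theorem \ref{thm_Silver} fix sequences $\langle \lambda_i \ : \ i \in \omega\rangle$ and $\langle \mu_i \ : \ i \in \omega\rangle$, both subsequences of the $\aleph_n$, with $\langle \lambda_i \cap \text{cof}(\mu_i) \ : \ i \in \omega\rangle$ mutually stationary; under $2^\omega<\aleph_\omega$ the $\mu_i$ may be taken uncountable. From these I aim to extract a single increasing sequence $\vec{\kappa}=\langle \kappa_n \ : \ n \in \omega\rangle$ cofinal in $\aleph_\omega$ together with a stationary set $S$ of ``I3-shaped'' models $M \prec H_\theta$ satisfying $M\cap\kappa_0\in\kappa_0$ and $\text{otp}(M\cap\kappa_i)=\kappa_{i-1}$ for all $i\ge 1$. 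Granting such an $S$, I would let $\vec{I}$ be the normal tower of height $\aleph_\omega$ induced by $\text{NS}\restriction S$ (that is, $\text{NS}\restriction S$, regarded as a normal system of filters, restricted to domain $V_{\aleph_\omega}$). Then $\text{NS}\restriction S \ge \vec{I}$ holds by construction, so $\vec{I}$ is a canonical projection of NS; and since every $M \in S$ has $M\cap\kappa_n\in C_n$ and $M\cap\kappa_0\in\kappa_0$, we get $C_n\in\text{Dual}(I_{\kappa_n})$ and $\kappa_0$-completeness of $I_{\kappa_0}$, so Lemma \ref{lem_DecidesCritImplies} makes $\vec{I}$ an I3 tower deciding its critical sequence as $\vec{\kappa}$.

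The hard part will be producing the stationary set $S$ of I3-shaped models. Silver's theorem hands me, for each $F$, only a single $F$-closed $X$ with the cofinality data $\text{cf}(\sup(X\cap\lambda_i))=\mu_i$ controlled, and this pins $\text{otp}(X\cap\lambda_i)$ down only to the interval $[\mu_i,\mu_i^+)$; upgrading to the exact graded ordertype $\text{otp}(M\cap\kappa_i)=\kappa_{i-1}$ demanded by the sets $C_n$ is the real obstacle. My approach would be to interleave and thin the sequences $\langle\lambda_i\rangle$, $\langle\mu_i\rangle$ into $\vec{\kappa}$ so that the required cofinality at $\kappa_i$ becomes $\kappa_{i-1}$, and then build each witness as a continuous (internally approachable) union of length $\kappa_{i-1}$ that is simultaneously kept thin below every $\kappa_j$, exploiting the uncountability of the $\mu_i$ (the one place where $2^\omega<\aleph_\omega$ is genuinely used) so that the $\omega$-level fusion closes off without inflating any ordertype past $\kappa_{i-1}$. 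The absoluteness packaged in Lemma \ref{lem_TreeGoodStruc} can then be invoked to see that these patterns are realized by stationarily, not merely cofinally, many $M$. This cofinality-to-ordertype conversion, carried out simultaneously across all $i$, is where essentially all the work lies.
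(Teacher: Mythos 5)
Your \ref{item_Sean_ProjectionNS} $\implies$ \ref{item_Sean_Jonsson} direction is correct and is essentially the paper's argument: put the $C_n$'s of Lemma \ref{lem_DecidesCritImplies} into the algebra, pick an $\vec{I}$-good $M$, and read off $\text{otp}(M\cap\kappa_i)=\kappa_{i-1}$, which gives $|M\cap\aleph_\omega|=\aleph_\omega$ with gaps. (As the paper notes, this also re-proves the mutual stationarity half of Silver's theorem.)

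The \ref{item_Sean_Jonsson} $\implies$ \ref{item_Sean_ProjectionNS} direction has a genuine gap. You route through Silver's Theorem \ref{thm_Silver}, which only hands you models $X$ with $\text{cf}(\sup(X\cap\lambda_i))=\mu_i$, and you then need to upgrade this to the exact graded ordertype condition $\text{otp}(M\cap\kappa_i)=\kappa_{i-1}$ (which in particular forces $|M\cap\kappa_i|=\kappa_{i-1}$ at every level simultaneously). You explicitly flag this ``cofinality-to-ordertype conversion'' as where all the work lies, but the sketch you give (interleaving, internally approachable chains of length $\kappa_{i-1}$, ``$\omega$-level fusion'') does not carry it out, and it is not clear it can be carried out: a chain of length $\kappa_{i-1}$ used to realize the ordertype at $\kappa_i$ threatens to inflate $|M\cap\kappa_j|$ past $\kappa_{j-1}$ for $j<i$ unless the chain is eventually constant below $\kappa_j$, and arranging this coherently for all $i$ at once is precisely the hard content you would need to supply. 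The appeal to Lemma \ref{lem_TreeGoodStruc} at the end is also a non sequitur; that lemma is about absoluteness of the existence of good structures between transitive models, not about upgrading closure to stationarity (and no upgrade is needed there anyway, since ``every algebra has a closed model with property P'' is the definition of stationarity).

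The paper avoids mutual stationarity entirely. Under $2^\omega<\aleph_\omega$, Corollary \ref{cor_JonssonWithReals} gives a stationary set $S$ of Jonsson substructures $M\prec H_\theta$ that \emph{contain all the reals}. For such $M$, $\aleph_\omega$ is a fixed point of the inverse collapse $\sigma_M$, its critical sequence $\langle\kappa^M_n\rangle$ is a subsequence of the $\aleph_n$'s and is coded by a real, hence lies in $M$; so $M\mapsto\langle\kappa^M_n\rangle_{n\ge 1}$ is regressive and Fodor stabilizes it to a fixed $\vec{\kappa}$ on a stationary $T\subseteq S$. One then takes $\vec{I}$ to be the projection of $\text{NS}\restriction T$ to a tower of height $\aleph_\omega$; the ordertype condition $\text{otp}(M\cap\kappa_i)=\kappa_{i-1}$ is automatic because $M\cap\kappa_i=\sigma_M[\kappa^M_{i-1}]$ is the order-preserving image of an ordinal. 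That single observation is the missing idea: you should work directly with the Jonsson structures (this is where $2^\omega<\aleph_\omega$ is genuinely used) rather than trying to reconstruct them from cofinality data.
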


We will need a standard fact:
\begin{fact}
Suppose $\theta \ge \aleph_\omega$, $\mathfrak{A}$ is a Skolemized structure extending $(H_\theta,\in)$ in a countable language, and $M \prec \mathfrak{A}$.  Let $\mu$ be a cardinal below $ \aleph_\omega$ and 
\[
M':= \text{Sk}^{\mathfrak{A}}(M \cup \mu)
\]
Then for all $n$ such that $\omega_n > \mu$:  
\[
\text{sup}(M' \cap \omega_n) = \text{sup}(M \cap \omega_n)
\]
\end{fact}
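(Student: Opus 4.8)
The plan is to prove the nontrivial inequality $\sup(M' \cap \omega_n) \le \sup(M \cap \omega_n)$; the reverse inequality is immediate from $M \subseteq M'$, which follows because $M' = \text{Sk}^{\mathfrak{A}}(M \cup \mu)$ contains $M$.

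First I would reduce an arbitrary ordinal $\gamma \in M' \cap \omega_n$ to a Skolem term. Since $M' = \text{Sk}^{\mathfrak{A}}(M \cup \mu)$ and $\mathfrak{A}$ is Skolemized in a countable language, I can write $\gamma = f^{\mathfrak{A}}(\bar{m}, \bar{\xi})$, where $f$ is a Skolem term, $\bar{m}$ is a tuple from $M$, and $\bar{\xi} = (\xi_1, \dots, \xi_k)$ is a tuple of ordinals below $\mu$. The crucial step is then to replace the single value $\gamma$ by a whole \emph{truncated} function that stays small enough to be an element of the structure. Noting that $\mu$ and $\omega_n$ are definable in $(H_\theta,\in)$ and hence lie in $M$, I would define, for $\bar{\eta} \in \mu^k$,
\[
h(\bar{\eta}) := \begin{cases} f^{\mathfrak{A}}(\bar{m}, \bar{\eta}) & \text{if } f^{\mathfrak{A}}(\bar{m}, \bar{\eta}) < \omega_n, \\ 0 & \text{otherwise.} \end{cases}
\]
By construction $h$ is a function from $\mu^k$ into $\omega_n$, so both its domain and range have hereditary cardinality below $\aleph_\omega \le \theta$; hence $h \in H_\theta$, i.e.\ $h$ is an element of the universe of $\mathfrak{A}$. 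Moreover $h$ is definable in $\mathfrak{A}$ from the parameters $\bar{m}, \mu, \omega_n$, all of which lie in $M$, so by elementarity $h \in M$, and therefore $\mathrm{rng}(h) \in M$ and $\sup(\mathrm{rng}(h)) \in M$.

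Finally I would invoke regularity. Because $\omega_n > \mu$, the set $\mathrm{rng}(h) \subseteq \omega_n$ has cardinality at most $|\mu^k| = |\mu| < \omega_n$, and $\omega_n$ is regular, so $\sup(\mathrm{rng}(h)) < \omega_n$; thus $\sup(\mathrm{rng}(h)) \in M \cap \omega_n$. Since $\gamma < \omega_n$, the truncation is inactive at $\bar{\xi}$, so $h(\bar{\xi}) = f^{\mathfrak{A}}(\bar{m}, \bar{\xi}) = \gamma$, whence $\gamma \in \mathrm{rng}(h)$ and $\gamma \le \sup(\mathrm{rng}(h)) \le \sup(M \cap \omega_n)$. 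As $\gamma \in M' \cap \omega_n$ was arbitrary, this yields $\sup(M' \cap \omega_n) \le \sup(M \cap \omega_n)$, completing the proof.

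The main obstacle is the bookkeeping that guarantees $h \in M$: this is precisely why the truncation is needed, since the untruncated function $\bar{\eta} \mapsto f^{\mathfrak{A}}(\bar{m}, \bar{\eta})$ could take values cofinal in $\theta$ and so need not belong to $H_\theta$. This is also where both standing hypotheses enter, namely $\theta \ge \aleph_\omega$ (so that functions with domain and range below $\aleph_\omega$ lie in $H_\theta$) and $\omega_n > \mu$ (so that the regularity of $\omega_n$ bounds $\mathrm{rng}(h)$).
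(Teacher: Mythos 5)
Your proof is correct and takes essentially the same approach as the paper's: the paper also writes an arbitrary $\alpha \in M'\cap\omega_n$ as $f(\xi)$ for a function $f\in M$ and $\xi<\mu$, notes ``without loss of generality $f:\mu\to\omega_n$'' (your explicit truncation), and concludes via $\mu<\omega_n$ that $\sup(\operatorname{rng}(f))\in M\cap\omega_n$. Your write-up simply spells out the details (tuples of parameters, why the truncated function lies in $H_\theta$ and in $M$) that the paper leaves implicit.
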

\begin{proof}
Let $n$ be such that $\omega_n > \mu$ and suppose $\alpha \in M' \cap \omega_n$.  Then $\alpha = f(\xi)$ for some function $f \in M$ and some ordinal $\xi < \mu$.  Then without loss of generality, $f: \mu \to \omega_n$.  Since $\mu < \omega_n$ and $f \in M$ then
\[
\text{sup}(\text{rng}(f)) \in M \cap \omega_n
\]
and hence $\alpha < \text{sup}(M \cap \omega_n)$. 
\end{proof}

\begin{corollary}\label{cor_JonssonWithReals}
If $2^\omega < \aleph_\omega$ and $\aleph_\omega$ is Jonsson, then for all $\theta \ge \aleph_\omega$ there are stationarily many $M \prec H_\theta$ such that $M \cap \aleph_\omega \subsetneq \aleph_\omega$, $|M \cap \aleph_\omega| = \aleph_\omega$, and $M$ contains all the reals.
\end{corollary}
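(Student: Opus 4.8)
The plan is to combine Silver's Theorem~\ref{thm_Silver} with the Fact stated just before this corollary: Silver's theorem will supply a proper Jonsson substructure whose ``gap'' below $\aleph_\omega$ is a bounded tail lying \emph{above} $2^\omega$, and the Fact will let me adjoin the reals without refilling that gap. Fix $\theta\ge\aleph_\omega$. By the definition of stationarity it suffices, for each Skolemized $\mathfrak{A}=(H_\theta,\in,<^*,\dots)$ in a countable language (with $<^*$ a well-order of $H_\theta$), to find $M\prec\mathfrak{A}$ with $M\cap\aleph_\omega\subsetneq\aleph_\omega$, $|M\cap\aleph_\omega|=\aleph_\omega$, and $\mathbb{R}\subseteq M$. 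Since $2^\omega<\aleph_\omega$, fix $m<\omega$ with $2^\omega=\aleph_m$ and let $b$ be the $<^*$-least bijection $\aleph_m\to\mathbb{R}$; as $b$ is definable it lies in every elementary substructure, so it is enough to arrange $\aleph_m\subseteq M$ in place of $\mathbb{R}\subseteq M$.

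Because $\aleph_\omega$ is Jonsson, Silver's Theorem~\ref{thm_Silver} provides subsequences $\langle\kappa_i\ :\ i\in\omega\rangle$ and $\langle\mu_i\ :\ i\in\omega\rangle$ of the $\aleph_n$'s, with $\langle\kappa_i\rangle$ cofinal in $\aleph_\omega$, such that $\langle\kappa_i\cap\text{cof}(\mu_i)\ :\ i\in\omega\rangle$ is mutually stationary; note $\mu_i<\kappa_i$ for each $i$, since otherwise $\kappa_i\cap\text{cof}(\mu_i)=\emptyset$. Feeding the Skolem functions of $\mathfrak{A}$ into mutual stationarity and taking the corresponding hull produces a proper $M_0\prec\mathfrak{A}$ with $M_0\cap\aleph_\omega$ of size $\aleph_\omega$ and $\text{cf}\big(\text{sup}(M_0\cap\kappa_i)\big)=\mu_i$ for every $i$. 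As each $\kappa_i$ is regular and $\mu_i<\kappa_i$, this forces $\text{sup}(M_0\cap\kappa_i)<\kappa_i$, i.e.\ the trace of $M_0$ on each level $\kappa_i$ is bounded. Choosing $i^{*}$ with $\kappa_{i^{*}}=\omega_{n^{*}}>2^\omega$ (so $n^{*}>m$) gives $\text{sup}(M_0\cap\omega_{n^{*}})<\omega_{n^{*}}$.

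Now set $M:=\text{Sk}^{\mathfrak{A}}(M_0\cup\aleph_m)$. Then $\aleph_m\subseteq M$, so $\mathbb{R}=\{b(\xi)\ :\ \xi<\aleph_m\}\subseteq M$, while $|M\cap\aleph_\omega|\ge|M_0\cap\aleph_\omega|=\aleph_\omega$. Applying the preceding Fact with $\mu=\aleph_m=2^\omega$ and $n=n^{*}$ (legitimate since $\omega_{n^{*}}>\mu$) yields $\text{sup}(M\cap\omega_{n^{*}})=\text{sup}(M_0\cap\omega_{n^{*}})<\omega_{n^{*}}$, so $M\cap\aleph_\omega\ne\aleph_\omega$; hence $M$ is as required. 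The essential difficulty---and the reason the full strength of Silver's theorem is needed rather than the bare Jonsson property---is securing a single substructure $M_0$ that is simultaneously of full size $\aleph_\omega$ and possesses a bounded tail gap at a level above $2^\omega$. A proper Jonsson substructure could a priori be cofinal in every $\omega_n$ for large $n$, its properness witnessed only by interior holes; such holes are exactly what the Fact fails to protect once $2^\omega$ is absorbed, whereas the cofinality-restricted mutual stationarity in Silver's theorem (via $\mu_i<\kappa_i$) pins the gap down as a recoverable tail.
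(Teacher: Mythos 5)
Your proof is correct, but it routes through Silver's Theorem \ref{thm_Silver} where the paper evidently intends a direct argument from the bare Jonsson property. The worry that motivates your detour---that a proper Jonsson substructure $M_0$ might be cofinal in every $\omega_n$ above some level, its properness witnessed only by ``interior holes''---is in fact vacuous. For $M_0 \prec (H_\theta,\in,<^*)$ and $n \ge 1$, if $\alpha \in M_0 \cap [\omega_{n-1},\omega_n)$ then the $<^*$-least surjection $f_\alpha : \omega_{n-1} \to \alpha$ lies in $M_0$ and $M_0 \cap \alpha = f_\alpha[M_0 \cap \omega_{n-1}]$; from this one gets that $\sup(M_0 \cap \omega_n) = \omega_n$ forces $|M_0 \cap \omega_{n-1}| = \omega_{n-1}$ (every proper initial segment of $M_0 \cap \omega_n$ cut at a point of $M_0$ has size at most $|M_0 \cap \omega_{n-1}|$, so $\operatorname{otp}(M_0\cap\omega_n) = \omega_n \le |M_0\cap\omega_{n-1}|^+$), hence by downward and then upward induction a submodel cofinal in every $\omega_n$ contains all of $\aleph_\omega$. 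So the set of levels where $M_0$ is cofinal is a proper initial segment of $\omega$, and any Jonsson witness is automatically bounded in every sufficiently large $\omega_n$---in particular in some $\omega_{n^*} > 2^\omega$---after which the Fact finishes exactly as in your last paragraph. That is the intended proof; your version buys the same bounded tail by invoking the harder \ref{item_Silver_Jonsson} $\implies$ \ref{item_Silver_MS} direction of Silver's theorem, whose own proof contains precisely the analysis above, so nothing is saved.

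Two small points in your write-up deserve attention. First, your claim that the hull $M_0$ has $|M_0 \cap \aleph_\omega| = \aleph_\omega$ needs the observation that $\sup_i \mu_i = \aleph_\omega$ (so that $|M_0 \cap \kappa_i| \ge \operatorname{cf}(\sup(M_0\cap\kappa_i)) = \mu_i$ is unbounded); this does hold because $\langle \mu_i : i \in \omega\rangle$ is an infinite subsequence of the $\aleph_n$'s and hence cofinal, but you should say so, since mutual stationarity alone does not yield a hull of full size. Second, passing from a function $F:[\aleph_\omega]^{<\omega}\to\aleph_\omega$ witnessing mutual stationarity to an $M_0 \prec \mathfrak{A}$ with $M_0 \cap \aleph_\omega$ equal to the closed set $X$ requires choosing $F$ so that $\operatorname{Sk}^{\mathfrak{A}}(X)\cap\aleph_\omega = X$; this is standard but is the step where the sups of $M_0$ are actually pinned to those of $X$.
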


\begin{proof}
(of Theorem \ref{thm_ImproveSilver}):  First we prove the easier direction.  Assume $\vec{I}$ is a tower as in part \ref{item_Sean_ProjectionNS}, and that $\langle \kappa_n \ : \ n \in \omega \rangle$ is the critical sequence it decides; note that $\vec{\kappa}$ must be a subsequence of the $\aleph_n$'s by the remarks above.  Let $\theta$ be large and $\mathfrak{A}$ an algebra on $H_\theta$.  We need to find some $M \prec \mathfrak{A}$ such that $|M \cap \aleph_\omega|=\aleph_\omega$ but $\aleph_\omega \nsubseteq M$.  Without loss of generality, $\mathfrak{A}$ includes a predicate for the sequence $\langle C_n \ : \ n \in \omega \rangle$ where $C_n$ is as defined in Lemma \ref{lem_DecidesCritImplies}.  Since we assume $\vec{I}$ is a canonical projection of NS, by Lemma \ref{lem_Equiv_ProjNS} there is some $M \prec \mathfrak{A}$ that is $\vec{I}$-good.  Since $C_n \in M$ for all $n$ and $M$ is $\vec{I}$-good, then $M \cap \kappa_n \in C_n$ for all $n \in \omega$.  This implies that
\[
\forall i \ge 1 \ \ \text{otp}(M \cap \kappa_i) = \kappa_{i-1} 
\]
so in particular
\[
\forall i \ge 1 \ \ \text{cf}(\text{sup}(M \cap \kappa_i)) = \kappa_{i-1}
\]
This implies that $|M \cap \aleph_\omega| \ge \text{sup}_i \kappa_{i-1} = \aleph_\omega$, yet clearly there are gaps in $M$ below $\aleph_\omega$.  In fact we've just proved that the sequence 
\[
\langle \kappa_i \cap \text{cof}(\kappa_{i-1}) \ : \ i \ge 1 \rangle
\]
is mutually stationary (i.e.\ part  \ref{item_Silver_MS} of Theorem \ref{thm_Silver}).

For the direction \ref{item_Sean_Jonsson} $\implies$ \ref{item_Sean_ProjectionNS} we assume $2^\omega < \aleph_\omega$ and that $\aleph_\omega$ is Jonsson.  Fix a large regular $\theta$, and let $S$ denote the set of $\aleph_\omega$-Jonsson subsets of $H_\theta$ that include all the reals; $S$ is stationary by Corollary \ref{cor_JonssonWithReals}.  For each $M \in S$ let $\sigma_M: H_M \to H_\theta$ be the inverse of the collapsing map of $M$.  Then $\aleph_\omega$ is a fixed point of $\sigma_M$, and hence the critical sequence $\langle \kappa^M_n \ : \ n \in \omega \rangle$ of $\sigma_M$ is a subset of $\aleph_\omega$.  Also $\sigma_M(\kappa^M_{n-1}) = \kappa^M_{n}$ is a cardinal in $V$ for every $n \ge 1$.  For each $n \ge 1$ let $k^M_n$ be the natural number such that $\kappa^M_n = \omega_{k^M_n}$.  Since $M$ includes all the reals, then
\[
s_M:= \langle k^M_n \ : \ n \ge 1 \rangle \in M
\]
and hence
\[
t_M:= \langle \kappa^M_n \ : \ n \ge 1 \rangle = \langle \omega_{s_M(n)} \ : \ n \ge 1 \rangle  \in M
\]

So the map $M \mapsto t_M$ is a regressive function on $S$.  By Fodor's Lemma there is a fixed $t$ that is a subsequence of the $\omega_n$'s and a stationary $T \subseteq S$ such that $t_M = t$ for all $M \in T$.  Notice also that since $t \in M$ for $M \in T$, it makes sense to apply $\sigma^{-1}$ to $t$, and it easily follows that
\[
\forall M \in T \ \ \langle \kappa^M_n \ : \ n \in \omega \rangle  \in H_M
\]

Let $\vec{I}$ be the projection of $\text{NS} \restriction T$ to a tower of height $\aleph_\omega$; more precisely, for $n \ge 1$ let $I_n$ be the projection of $\text{NS} \restriction T$ to an ideal on $V_{\kappa_n}$, where $\langle \kappa_n \ : \ n \ge 1  \rangle$ is the increasing enumeration of $t$ (starting at 1).  Note that
\begin{equation}\label{eq_MatchKappas}
\forall M \in T  \ \ \forall n \ge 1 \ \ \kappa^M_n = \kappa_n
\end{equation}
Clearly $\vec{I}$ is a tower, and is a canonical projection of NS by definition.  It remains to prove that it is an I3 tower and that it decides its critical sequence.  By Lemma \ref{lem_DecidesCritImplies} it suffices to prove that $I_1$ is $\kappa_1$-complete and that
\[
\forall n \ge 2 \ \ \{  X \subset \kappa_n \ : \  \forall i \in [2,n] \ \  \text{otp}(X \cap \kappa_i) = \kappa_{i-1}  \} \in \text{Dual}(I_n)
\]  
which in turn (by definition of ideal projection) is equivalent to proving that the following holds for all $n \ge 2$:
\[
\{ M \in T \ : \  M \cap \kappa_1 \in \kappa_1 \text{ and } \forall i \in [2,n] \ \  \text{otp}(M \cap \kappa_{i}) = \kappa_{i-1}    \} \in \text{Dual}\big( \text{NS} \restriction T \big)
\]

But in fact for every $M \in T$:
\begin{itemize}
 \item $M \cap \kappa_1 \in \kappa_1$ because $\kappa_1 = \kappa^M_1 =  \sigma_M(\text{crit}(\sigma_M))$; and
 \item if $i \ge 2$ then  
 \[
 M \cap \kappa_i = M \cap \kappa^M_i =  \sigma_M[\kappa^M_{i-1}]
 \]
which clearly has ordertype $\kappa^M_{i-1}$; and since $i \ge 2$ then \eqref{eq_MatchKappas} ensures that $\kappa^M_{i-1} = \kappa_{i-1}$.
\end{itemize} 

\end{proof}

Theorem \ref{thm_ImproveSilver}, in conjunction with Woodin's result on the stationary tower, gives the following corollary, which is similar to Corollary 3.2 of Burke~\cite{MR1472122}:
\begin{corollary}
Suppose $2^\omega < \aleph_\omega$, $\aleph_\omega$ is Jonsson, and there exists a Woodin cardinal.  Then in some forcing extension of $V$ there is an elementary embedding $j:V \to N$ with $N$ wellfounded, $\text{crit}(j) < \aleph_\omega^V$, $\aleph_\omega^V$ is a fixed point of $j$, and the critical sequence of $j$ is an element of $V$.
\end{corollary}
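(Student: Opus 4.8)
The plan is to combine Theorem \ref{thm_ImproveSilver} with Woodin's stationary tower. First I would invoke Theorem \ref{thm_ImproveSilver}: since $2^\omega < \aleph_\omega$ and $\aleph_\omega$ is Jonsson, there is an I3 tower $\vec{I}$ of height $\aleph_\omega$ that decides its critical sequence $\vec{\kappa} = \langle \kappa_n \ : \ n \in \omega \rangle \in V$ and is a canonical projection of NS. Fix the witnessing stationary set $S$, so that $\text{NS} \restriction S \ge \vec{I}$; by lifting $S$ if necessary we may assume $S$ is a stationary collection of elementary substructures of some $H_\theta$. Note that, by Definition \ref{def_I2_tower} together with the remark following Lemma \ref{lem_IllfoundedI2}, $\vec{\kappa}$ is a subsequence of the $\aleph_n$'s that is cofinal in $\aleph_\omega$; in particular $\kappa_0 < \aleph_\omega$.

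Next I would use the Woodin cardinal to force wellfoundedness. Let $\delta$ be Woodin and arrange $\theta < \delta$, so that $S$ is a condition in the stationary tower $\mathbb{Q}_{<\delta}$. Forcing with $\mathbb{Q}_{<\delta}$ below $S$, Woodin's theorem yields in the extension $V[G]$ a generic elementary embedding $j^* : V \to M$ with $M$ transitive (wellfounded). Because the generic concentrates below $S$, we have $j^*[\bigcup S] \in j^*(S)$, so the derived ultrafilter
\[
H := \{ A \subseteq \wp(\textstyle\bigcup S) \ : \ j^*[\textstyle\bigcup S] \in j^*(A) \}
\]
is $V$-generic for $\mathbb{B}_{\text{NS} \restriction S}$, and the generic ultrapower $j_H : V \to \text{ult}(V,H)$ is a factor of $j^*$; hence $\text{ult}(V,H)$ is wellfounded.

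Now I would transfer wellfoundedness down to $\vec{I}$ via the projection. Since $\text{NS} \restriction S \ge \vec{I}$, the system $\text{NS} \restriction S$ extends $\vec{I}$ (Definition \ref{def_PO}), so the $\mathbb{B}_{\vec{I}}$-generic $G_{\vec{I}}$ derived from $H$ is $V$-generic and $j_H$ factors through the $\vec{I}$-generic ultrapower $j_{\vec{I}} : V \to N_{\vec{I}}$. Consequently $N_{\vec{I}}$ embeds into the wellfounded $\text{ult}(V,H)$ and is itself wellfounded; transitivize it and set $j := j_{\vec{I}}$, $N := N_{\vec{I}}$. Because $\vec{I}$ decides its critical sequence as $\vec{\kappa}$, the embedding $j$ has $\text{crit}(j) = \kappa_0 < \aleph_\omega$ and critical sequence exactly $\vec{\kappa} \in V$; and since $\vec{\kappa}$ is cofinal in $\aleph_\omega$,
\[
j(\aleph_\omega^V) = \sup_n j(\kappa_n) = \sup_n \kappa_{n+1} = \aleph_\omega^V ,
\]
so $\aleph_\omega^V$ is a fixed point. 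This $j$ witnesses the corollary.

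The main obstacle will be the transfer carried out in the last two paragraphs: one must verify that the stationary tower generic really does read off a $\mathbb{B}_{\text{NS} \restriction S}$-generic $H$, and then that the canonical projection $\text{NS} \restriction S \ge \vec{I}$ yields a genuinely $V$-generic $G_{\vec{I}}$ together with a factor map $N_{\vec{I}} \to \text{ult}(V,H)$. Both rest on the transitivity of the $\ge$ relation for normal systems of filters (Definition \ref{def_PO}) and the attendant factoring of generic ultrapowers, combined with Woodin's stationary tower theorem, paralleling Corollary 3.2 of Burke~\cite{MR1472122}; once wellfoundedness is secured, the decided critical sequence supplies the critical point, the ground-model critical sequence, and the fixed point essentially for free.
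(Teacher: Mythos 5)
Your high-level plan (Theorem \ref{thm_ImproveSilver} plus the stationary tower) matches the paper's, but there is a genuine gap at exactly the step you flag as ``the main obstacle'': the claim that the ultrafilter $H$ derived from the stationary tower embedding $j^*$ is $V$-generic for $\mathbb{B}_{\text{NS}\restriction S}$, and hence that the induced $G_{\vec{I}}$ is $V$-generic for $\mathbb{B}_{\vec{I}}$. Forcing below $S^\theta_{\vec{I}\text{-good}}$ only guarantees $j^*[H_\theta^V]\in j^*(S^\theta_{\vec{I}\text{-good}})$, i.e.\ that the derived system of ultrafilters \emph{extends} the relevant dual filters --- this is precisely $\vec{F}$-\emph{goodness} in the sense of Definition \ref{def_GoodSelfGen}. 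Genericity would require meeting every ground-model maximal antichain, i.e.\ \emph{self-genericity}, which is strictly stronger and is not provided by the stationary tower here (these Boolean algebras are far from $|{\cup}S|^+$-c.c., so maximal antichains need not diagonalize away). Neither transitivity of $\ge$ nor the factoring of ultrapowers supplies this. The gap matters because your appeal to ``$\vec{I}$ decides its critical sequence'' is a statement about what $\mathbb{B}_{\vec{I}}$ \emph{forces}, which says nothing about the ultrapower by a non-generic ultrafilter.

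The paper sidesteps genericity entirely: it takes $j$ to be the stationary tower embedding itself, notes that every $M\in S^\theta_{\vec{I}\text{-good}}$ satisfies the first-order statement ``$\sigma_M$ has critical point below $\aleph_\omega^{H_M}$, fixes $\aleph_\omega^{H_M}$, and its critical sequence lies in $H_M$'' (this comes from goodness with respect to the sets $C_n$ of Lemma \ref{lem_DecidesCritImplies}), and transfers this to $j\restriction H_\theta^V$ by \L o\'s's theorem applied to $j[H_\theta^V]\in N$. Your argument can be repaired in the same spirit without any genericity: the derived (possibly non-generic) ultrafilters still contain each $C_n$ and still satisfy \L o\'s's theorem and factor into $j^*$, so Lemma \ref{lem_DecidesCritImplies}'s combinatorial characterization --- rather than the forcing relation --- yields $\operatorname{crit}(j_{\vec{I}})=\kappa_0$ and $j_{\vec{I}}(\kappa_n)=\kappa_{n+1}$ directly. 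As written, though, the genericity assertions are unjustified and carry the weight of the proof.
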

\begin{proof}
By Theorem \ref{thm_ImproveSilver}, there exists a tower $\vec{I}$ of height $\aleph_\omega$ that decides its own critical sequence and is a canonical projection of NS.  Since $\vec{I}$ is a canonical projection of NS, by Lemma \ref{lem_Equiv_ProjNS} $S^\theta_{\vec{I}\text{-good}}$ is stationary, where $\theta:= 2^{\aleph_\omega}$.  Let $\delta$ be a Woodin cardinal, and let $G$ be generic for the full stationary tower $\mathbb{P}_{\delta}$ with $S^\theta_{\vec{I} \text{-good}} \in G$.  Let $j:V \to_G N$ be the generic ultrapower; by Larson~\cite{MR2069032}, $N$ is wellfounded and
\[
j[H_\theta^V] \in N
\]

In $V[G]$ let $\vec{\kappa} = \langle \kappa_n \ :  \ n \in \omega \rangle$ be the critical sequence of $j$.\footnote{We do not know yet that this is the same as the critical sequence decided by $\vec{I}$, because $G$ is generic for the stationary tower, \textbf{not} for $\mathbb{B}_{\vec{I}}$.  However they do end up being the same critical sequence.}  Now every $M \in S^\theta_{\vec{I}\text{-good}}$ has the property that $\sigma_M: H_M \to H_\theta$ has fixed point $\aleph_\omega^{H_M}$, $\text{crit}(\sigma_M) <\aleph^{H_M}_\omega$, and the critical sequence of $\sigma_M$ is an element of $H_M$.  By Los' Theorem, $N$ believes these same statements about $M:= j[H_\theta^V]$.  Since $H_{M}= H^V_\theta$ and $\sigma_{M} = j \restriction H^V_\theta$ it follows that $\vec{\kappa} \in H^V_\theta$.
\end{proof}

%The nontrivial part of the proof of Theorem \ref{thm_Silver} is the \ref{item_Silver_Jonsson} $\implies$ \ref{item_Silver_MS} direction, and appears in Kanamori-Magidor~\cite{} (see the proof of Theorem \ref{} there).  The following remark provides a different proof.
%
%\begin{remark}
%The existence of an I2 tower $\vec{I}$ as in part \ref{item_Sean_ProjectionNS} of Theorem \ref{thm_ImproveSilver} implies part \ref{item_Silver_MS} of Theorem \ref{thm_Silver}, as follows:  let $\langle \kappa_n \ : \ n \in \omega \rangle$ be the critical sequence decided by $\vec{I}$.  By Fact \ref{}, for every $n > 0$ the following set is in the dual of $I_n$:
%\[
%C_n:= \{ X \subseteq \kappa_n \ : \  \forall i \in [1,n] \ \ \text{otp}(X \cap \kappa_i) = \kappa_{i-1}    \}
%\]
%We prove that the sequence
%\[
%\langle \kappa_i \cap \text{cof}(\kappa_{i-1}) \ : \ i \in \omega  \rangle
%\]
%is mutually stationary.  
%\end{remark}

\section{Questions}\label{sec_Questions}

Several results in this paper showed that, in the presence of large cardinals, NS can consistently fail to be universal for certain normal systems of filters.  For example, if there is a strong cardinal that is not supercompact, then many of its extenders are not canonical projections of NS (Corollary \ref{cor_StrongNotSuper}); and if there is an I2 embedding then there exists an I3 tower of ultrafilters that is not a canonical projection of NS (conjunction of Theorem \ref{thm_CharI2_Project} with Lemma \ref{lem_IllfoundedI2}). 

On the other hand, Burke's ZFC theorem shows that all normal ideals and normal towers of inaccessible height (precipitous or otherwise) are canonical projections of NS.  Moreover the only normal systems of filters that provably exist in ZFC that the author is aware of are those that are explicitly defined as projections of normal ideals,\footnote{E.g.\ one can fix a stationary set $S$ and canonically project $\text{NS} \restriction S$ to another ideal, a tower of ideals, or an ideal extender.} which are hence (by Burke's theorem) canonical projections of NS.  This raises the following question:
\begin{question}\label{q_ZFC}
Is it consistent with ZFC that every normal system of filters is a canonical projection of NS?  
\end{question}

The referee suggested another related question, motivated by the results in Section \ref{sec_CharSuper}:
\begin{question}\label{q_Referee}
Is the existence of a strong, non-supercompact cardinal an anti large cardinal axiom?  I.e.\ is it the case that assuming as many large cardinal axioms as one wishes in the universe, any strong cardinal must be supercompact?
\end{question} 

(\textbf{Added in press: } Stamatis Dimopoulos pointed out to the author that ``there exists a strong cardinal, and every strong cardinal is supercompact" is not consistent, because by the proof of Proposition 26.11 of Kanamori~\cite{MR1994835}, if $\kappa$ is supercompact then there exists a $\mu < \kappa$ and some superstrong embedding $\pi$ with $\text{crit}(\pi) = \mu$ and $\pi(\mu) = \kappa$.  By composing with embeddings witnessing supercompactness of $\kappa$, it follows that $\mu$ is a strong cardinal.  In particular, if $\kappa$ is the least supercompact cardinal then there is always a strong, non-supercompact cardinal below it.)

In fact, the author does not even know if the statement $\Phi \equiv$ ``there exists a strong cardinal, and every strong cardinal is supercompact" is even consistent.  By  Corollary \ref{cor_StrongNotSuper}, if $\kappa$ is strong but not supercompact, then for all sufficiently large $\lambda$, all (short) $\lambda$-strong extenders with critical point $\kappa$ must fail to be canonical projections of NS.  In particular, any model with at least one strong cardinal that witnesses a positive solution to Question \ref{q_ZFC} would also witness the consistency of $\Phi$.

We turn now to questions about the material in Section \ref{sec_OmegaCofTowers}.  It is straightforward to show that if $\vec{F}$ is a normal system of filters and the set of $\vec{F}$-self-generic structures is $\vec{F}$-projective stationary, then $\vec{F}$ is precipitous.  Theorem 3.8 of \cite{Cox_MALP} gave a converse to this, in the case where $\vec{F}$ is a single ideal whose universe consists of countable sets.\footnote{E.g. a normal fine ideal $I$ on $[H_\theta]^\omega$ is precipitous if and only if there are $I$-projective-stationarily many $I$-self-generic structures.}  Does this converse extend to $\omega$-cofinal towers?   Theorem \ref{thm_Equiv_WF_project} shows that if $\vec{I}$ is a precipitous, $\omega$-cofinal tower of normal ideals, then there are $\vec{I}$-projective-stationarily many $\vec{I}$-good structures.\footnote{And in fact this characterizes wellfoundedness if each $I_n$ is the dual of an ultrafilter.}  The following question asks if this can be improved to get self-generic structures:
\begin{question}
Suppose $\vec{I}$ is a precipitous, $\omega$-cofinal tower of ideals.  Must the set of $\vec{I}$-self-generic structures be $\vec{I}$-projective stationary?
\end{question}

Recall from the discussion after Definition \ref{def_I2_tower} that I3 embeddings were characterized by I3 towers of ultrafilters, whereas I2 embeddings were characterized by I3 towers of ultrafilters that happen to be canonical projections of NS.  Also, Theorem \ref{thm_ImproveSilver} showed (under assumption of small continuum) that ``$\aleph_\omega$ is Jonsson" is equivalent to the existence of an I3 tower of height $\aleph_\omega$ that decides its critical sequence and is a canonical projection of NS.  This analogy suggests the following conjecture:

\begin{conjecture}
The phrase ``and is a canonical projection of NS" cannot be removed from item \ref{item_Sean_ProjectionNS} in the statement of Theorem \ref{thm_ImproveSilver}.  I.e.\ the existence of an I3 tower of height $\aleph_\omega$ that decides its critical sequence does \textbf{not} imply that  $\aleph_\omega$ is Jonsson.
\end{conjecture}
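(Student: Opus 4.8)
The goal is to establish the consistency content of the conjecture: that there is a model of $2^\omega < \aleph_\omega$ carrying an I3 tower $\vec{I} = \langle I_n \ : \ n \in \omega \rangle$ of height $\aleph_\omega$ which decides its critical sequence $\vec{\kappa}$, yet in which $\aleph_\omega$ is \emph{not} Jonsson. By the direction \ref{item_Sean_ProjectionNS} $\implies$ \ref{item_Sean_Jonsson} of Theorem \ref{thm_ImproveSilver}, any such witnessing tower is automatically \emph{not} a canonical projection of NS; equivalently, by Lemma \ref{lem_Equiv_ProjNS}, its $\vec{I}$-good substructures of $H_\theta$ fail to be $\vec{I}$-projective-stationary. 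The plan is therefore to build a tower whose generic ultrapower realizes the full I3 structure---critical sequence exactly $\vec{\kappa}$, as governed by the purely combinatorial conditions $C_n \in \text{Dual}(I_{\kappa_n})$ isolated in Lemma \ref{lem_DecidesCritImplies}---but for which there are \emph{no} ground-model families of good structures witnessing projective stationarity. This separation is the heart of the matter, since it is exactly such ground-model structures that, through the ordertype conditions $\text{otp}(M \cap \kappa_i) = \kappa_{i-1}$, manufacture the mutually stationary sequence that Silver's Theorem \ref{thm_Silver} converts into Jonsson-ness.

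First I would fix the target combinatorics. By Lemma \ref{lem_DecidesCritImplies}, deciding the critical sequence as $\vec{\kappa}$ is equivalent to $\kappa_0$-completeness of $I_{\kappa_0}$ together with $C_n \in \text{Dual}(I_{\kappa_n})$ for all $n$, a requirement that makes no reference to the forcing relation; note also that since the $\aleph_n$ are not measurable, the tower must genuinely be a tower of ideals, with a nontrivial generic embedding. This pins down what must be \emph{added} (a tower of ideals with these two features, whose generic ultrapower is wellfounded enough to count as an I3 tower) and what must be \emph{avoided} (ground-model $\vec{I}$-good structures in projective-stationary abundance). Starting from a model carrying an honest large-cardinal configuration---e.g.\ an $\omega$-sequence $\langle \kappa_n \rangle$ arising as a critical sequence with I3-like reflection, in the spirit of the analysis preceding Lemma \ref{lem_IllfoundedI2}---I would collapse the $\kappa_n$ to become a subsequence of the $\aleph_n$ (sending the supremum of the $\kappa_n$ to $\aleph_\omega$) and let $\vec{I}$ be the tower induced on the collapsed cardinals by the ground-model embedding. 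The conditions $C_n$ are then inherited from the critical-sequence behaviour of that embedding, so the resulting tower decides $\vec{\kappa}$, and one keeps $2^\omega < \aleph_\omega$ by choosing a suitably closed collapse.

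The hard part will be reconciling the two demands, which pull in opposite directions. The assertion $C_n \in \text{Dual}(I_{\kappa_n})$ says the generic ultrapower sees $j[\kappa_n]$ with precisely the ordertypes $\kappa_{i-1}$, which is tantalizingly close to the mutual stationarity of $\langle \kappa_i \cap \text{cof}(\kappa_{i-1}) \ : \ i \ge 1 \rangle$ that produces Jonsson-ness in the proof of Theorem \ref{thm_ImproveSilver}. The entire construction must ensure that the tower delivers these ordertypes only \emph{generically}, while no single ground-model $M \prec H_\theta$ realizes them simultaneously---otherwise $\aleph_\omega$ would be Jonsson. Thus the forcing that produces the tower must add the generic embedding while \emph{destroying} rather than creating projective-stationarily many good structures; in particular it should add a Jonsson algebra on $\aleph_\omega$ (a scale-like or $\square$-like object) that the ideals $I_{\kappa_n}$ cannot see. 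Controlling this interaction---showing that the combinatorial tower survives a Jonsson-algebra-adding forcing, and that no good structures are inadvertently introduced---is the central obstacle, and is presumably why the statement is only conjectured. A successful argument will likely require a bespoke forcing arranged so that, via Theorem \ref{thm_Equiv_WF_project} and Lemma \ref{lem_Equiv_ProjNS}, its generic embedding is detectably not a projection of NS.
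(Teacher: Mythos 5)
The statement you are addressing is stated in the paper as an open \emph{conjecture}; the paper offers no proof of it, so there is nothing to compare your argument against except the surrounding results. Judged on its own terms, your proposal is a strategy outline rather than a proof, and it has a genuine gap --- one you yourself flag at the end. Identifying the tension between the conditions $C_n \in \text{Dual}(I_{\kappa_n})$ of Lemma \ref{lem_DecidesCritImplies} (which give, for each fixed $n$, a stationary set of $X \subseteq \kappa_n$ realizing the ordertypes $\kappa_{i-1}$ for $i \le n$) and the full mutual stationarity of the $\omega$-sequence (which is what canonical projection to NS supplies and what Silver's Theorem \ref{thm_Silver} needs) is the right diagnosis of why the conjecture is plausible. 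But no model is actually produced, and the step ``arrange a forcing that adds the tower while destroying all projective-stationarily many good structures'' is precisely the open problem restated, not a reduction of it.

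There is also a concrete obstruction to the specific route you sketch. If you induce the tower from a ground-model embedding after a collapse, the natural induced tower will tend to be precipitous, and by part \ref{item_PrecipImpliesProjection} of Theorem \ref{thm_Equiv_WF_project} every precipitous $\omega$-cofinal tower of normal ideals \emph{is} a canonical projection of NS --- which by the easy direction of Theorem \ref{thm_ImproveSilver} would make $\aleph_\omega$ Jonsson and defeat the construction. So any witnessing tower must be non-precipitous, a constraint your plan does not engage with; cf.\ Lemma \ref{lem_IllfoundedI2}, where ill-founded towers are obtained only by a reflection argument from an honest I2 embedding, not by a collapse. Finally, note that the consistency of ``$\aleph_\omega$ is Jonsson'' is itself not known, so a proof of the conjecture would most plausibly go by getting the tower from a hypothesis too weak to yield Jonsson-ness, rather than by starting from an I3-type configuration and trying to kill Jonsson-ness afterwards; your proposal points in the opposite direction.
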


\begin{bibdiv}
\begin{biblist}
\bibselect{Bibliography}
\end{biblist}
\end{bibdiv}

\end{document}